\newcolumntype{C}{>{$}c<{$}}
\newcommand{\Lab}{\text{Lab}}
\newcommand\restrict[1]{\raisebox{-.5ex}{$|$}_{#1}}
\newcommand{\graft}{\curvearrowright}
\newcommand{\gl}{\diamond}
\newcommand{\dF}{\mathbb{F}}
\newcommand{\dFdec}{\dF^{\text{dec}}}
\newcommand{\tildedFdec}{\tilde{\dF}^{\text{dec}}}
\newcommand{\ET}{\mathcal{ET}}
\newcommand{\EF}{\mathcal{EF}}
\newcommand{\blank}{{-}}
\newcommand{\tildegraft}{\mathbin{\tilde{\graft}}}
\DeclareMathOperator{\Irr}{Irr}
\DeclareMathOperator{\Conn}{Conn}
\newtheorem{theorem}{Theorem}[section]
\newtheorem{definition}[theorem]{Definition}
\newtheorem*{definition*}{Definition}
\newtheorem{proposition}[theorem]{Proposition}
\newtheorem{lemma}[theorem]{Lemma}
\newtheorem{remark}[theorem]{Remark}
\newtheorem*{remark*}{Remark}
\newtheorem*{remarks*}{Remarks}
\newtheorem{ass}[theorem]{Assumption}
\newtheorem*{notation*}{Notation}
\newtheorem{ex}[theorem]{Example}
\newtheorem*{ex*}{Example}
\newtheorem*{exs*}{Examples}
\newtheorem*{app*}{Application}
\newtheorem{conjecture*}{Conjecture}
\def\ts{\thinspace}
\title{High order integration of stochastic dynamics on Riemannian manifolds with frozen flow methods
}
\author{
Eugen Bronasco\textsuperscript{1}, Adrien Busnot Laurent\textsuperscript{2} and Baptiste Huguet\textsuperscript{2}
}
\begin{document}
\footnotetext[1]{
Université de Genève, Section de mathématiques, Genève, Switzerland. Eugen.Bronasco@unige.ch.}
\footnotetext[2]{
Univ Rennes, INRIA (Research team MINGuS), IRMAR (CNRS UMR 6625) and ENS Rennes,
France.
Adrien.Busnot-Laurent@inria.fr,
Baptiste.Huguet@math.cnrs.fr.}

\maketitle

\begin{abstract}
We present a new class of numerical methods for solving stochastic differential equations with additive noise on general Riemannian manifolds with high weak order of accuracy.
In opposition to the popular approach with projection methods, the proposed methods are intrinsic: they only rely on geometric operations and avoid coordinates and embeddings.
We provide a robust and general convergence analysis and an algebraic formalism of exotic planar Butcher series for the computation of order conditions at any high order.
To illustrate the methodology, an explicit method of second weak order is introduced, and several numerical experiments confirm the theoretical findings and extend the approach for the sampling of the invariant measure of Riemannian Langevin dynamics.

\smallskip

\noindent
{\it Keywords:\,} geometric numerical integration, stochastic differential equations, Riemannian manifolds, Riemannian Langevin, ergodicity, Lie-group methods, frozen flow, Butcher series, exotic series, post-Lie algebra, Hopf algebra, order conditions.
\smallskip

\noindent
{\it AMS subject classification (2020):\,} 16T05, 41A58, 60H35, 37M25, 65L06, 70H45.
\end{abstract}

%\tableofcontents

\section{Introduction}
%\label{section:Introduction}

Let $(\MM,g)$ be a smooth connected complete Riemannian manifold equipped with a metric $g$ and its Levi-Civita connection $\nabla$.
Let $E_1$,\dots,$E_D$ be a frame, that is, $D$ smooth vector fields with $\dim(\MM)\leq D$ such that
\[\Span_{\R} (E_1(y),\dots,E_D(y))=T_y\MM, \quad y\in\MM.\]
Given a smooth vector field $F(x)=\sum_{d=1}^D f^{d}(x)E_d(x)$, we consider Stratonovich stochastic differential equations on $\MM$ of the form,
\begin{equation}
\label{equation:SDE_Strato}
dX(t)=F(X(t))dt+\sqrt{2}\sum_{d=1}^D E_d(X(t)) \circ dW_d(t).
\end{equation}
Equation \eqref{equation:SDE_Strato} can be seen as a stochastic differential equation with additive noise w.r.t.\ts the frame $(E_d)$.
Under reasonable assumptions, the flow of \eqref{equation:SDE_Strato} is well-posed on $\MM$. The aim of the present paper is to introduce a new class of intrinsic one-step integrators
\begin{equation}
\label{equation:integrator}
X_{n+1}=\Phi_h(X_n),
\end{equation}
where $\Phi_h$ is a (random) local diffeomorphism of $\MM$ for a timestep $h$ small enough, for solving dynamics of the form \eqref{equation:SDE_Strato} with high order of accuracy in the weak sense.
We take inspiration from Lie-group integrators \cite{MuntheKaas95lbt, MuntheKaas97nio, MuntheKaas98rkm, MuntheKaas99hor, Iserles00lgm, MuntheKaas24gio} and more precisely from Crouch-Grossman and commutator-free methods \cite{Crouch93nio, Owren99rkm, Celledoni03cfl, Owren06ocf}. In opposition to \cite{Malham08slg}, we consider an approach with frames allowing us to define a new versatile class of methods that works on any smooth Riemannian manifold.
An important feature of the new integrators is that they do not rely on the use of embeddings or on local coordinates. Their formulation, convergence analysis, order theory, and implementation are entirely intrinsic.
%\moda{One could consider an approach with stochastic RKMK methods \cite{}, though the equation on the tangent bundle does not retain the additive noise feature, which makes the approach unnecessarily difficult.}

%The approach is fairly general as the frames $(E_d)$ exists on any smooth manifold, though we emphasize that the new integrators rely on the knowledge of the exact flow associated to $\R$-linear combinations of the $E_d$, so that the choice of the frame matters for the implementation but not for the analysis.
%A particular case of interest lies with homogeneous manifolds, that is, when a Lie-group $G$ acts transitively on $\MM$. In this context, a natural choice of frame is derived from a basis $(A_d)$ of the Lie algebra $\mathfrak{g}$ by $E_d(y)=A_d\cdot y$, where we use the same notation $\cdot$ for the Lie group and Lie algebra action on $\MM$.
%%
%Most of the manifolds used for numerical experiments of stochastic manifolds are actually homogeneous manifolds, such as Lie-groups, spheres, Stiefel manifold, symmetric positive definite matrices, \dots
%%The analysis presented in this paper extends straightforwardly to the non-compact case, and we use the compactness assumption for the sake of simplicity.

It is crucial that a numerical approximation respects the geometry of the problem and lies on $\MM$, especially for the sampling of measures. In particular, our methods extend naturally for the sampling of measures interpreted as invariant measure of ergodic dynamics.
%The dynamics \eqref{equation:SDE_Strato} is ergodic if there exists a unique measure $d\mu_\infty=\rho_\infty d\vol$, where $d\vol$ is the volume form induced by the metric $g$, such that the solution of \eqref{equation:SDE_Strato} behaves in long time according to $d\mu_\infty$: for all test functions $\phi\in\CC_P^\infty(\MM)$ whose derivatives of any order have polynomial growth,
%\[\lim_{T\to\infty}\frac{1}{T}\int_0^T \phi(X(t)) dt= \int_{\MM} \phi(x) d\mu_\infty \quad \text{almost surely}.\]
%We refer for instance to~\cite{Debussche12wbe} and references therein for appropriate assumptions to obtain the ergodicity of a numerical scheme in the context of dynamics in $\R^d$.
%\textcolor{red}{An ergodic integrator has order $p$ for the invariant measure if for $h\leq h_0$ small enough, all initial condition $X_0\in\MM$ and test functions $\phi\in \CC_P^\infty(\MM)$,
%\begin{equation}
%\label{equation:ergodic_order}
%\abs{\int_\MM \phi(x) d\mu^h(x)-\int_\MM \phi(x) d\mu_\infty(x)}\leq C(\phi)h^{p}.
%\end{equation}}
%%Under mild assumptions on the SDE \eqref{equation:SDE_Strato},
Let us consider for instance the central example from molecular dynamics of Riemannian Langevin dynamics:
\begin{align}
\label{equation:Langevin}
dX(t)&=-\sum_{d=1}^D(E_d [V]E_d+\nabla_{E_d}E_d)(X(t))dt
+\sqrt{2}\sum_{d=1}^D E_d(X(t)) \circ dW_d(t)\\
&=-\nabla V(X(t))dt+\sqrt{2}dB_\MM(t), \nonumber
\end{align}
where $V$ is a smooth potential, the frame $(E_d)$ is an orthonormal frame basis, and the vector field $F=-\nabla V-\sum \nabla_{E_d}E_d$ contains the Itô correction.
Note that the choice $V=0$ yields Brownian dynamics $B_\MM(t)$ on $\MM$.
Riemannian Langevin dynamics \eqref{equation:Langevin} are naturally ergodic under mild assumptions, that is, the dynamics in long time behaves according to a deterministic probability measure, called the invariant measure, which takes the explicit form $d\mu_\infty=e^{-V}d\vol$. The generator of the SDE is $\LL=-\nabla V\cdot\nabla +\Delta_g$, where $\Delta_g$ is the Laplace-Beltrami operator.
Actually, an integrator of weak order $p$ automatically has at least order $p$ for the invariant measure \cite{Talay90eot}. We refer for instance to~\cite{Debussche12wbe} and references therein for appropriate assumptions to obtain the ergodicity of a numerical scheme in the context of dynamics in $\R^d$.
To the best of our knowledge, there does not exist any intrinsic method of order greater than one for sampling the invariant measure of Riemannian Langevin dynamics.

There exists a handful of extrinsic stochastic integrators on manifolds, that is, integrators that rely on an embedding of $\MM$ in a Euclidean space of higher dimension.
In the case of smooth embedded manifolds $\MM=\{x\in\R^D,\zeta(x)=0\}$ defined by constraints, the approach using projection methods is the most popular in the literature (see \cite{Lelievre10fec} and references therein). We cite in particular the constrained Euler method:
\[X_{n+1}=X_n+hF(X_n)+\sqrt{2h}\xi_n+\lambda_n \nabla \zeta(X_n),\quad \zeta(X_{n+1})=0,\]
where $\lambda_n$ is a Lagrange multiplier and the $\xi_n$ are independent standard Gaussian random vectors.
While the approach using projection methods is straightforward to implement and widely used in applications, these methods rely on an embedding of $\mathcal{M}$ into a vector space of much higher dimension and use non-intrinsic quantities, so that they often face severe timestep restrictions.
Moreover, the algebraic structure associated to order theory is difficult and does not allow at the present time for the creation of methods of arbitrarily high order, nor for a systematic approach to stochastic backward error analysis \cite{Bronasco22cef}.
Stochastic Lie group methods seldom appear in the literature \cite{Malham08slg, Davidchack15nla, Marjanovic15asa, Ableidinger17wsr, Marjanovic18nmf, Cheng22eso}, but focus on low order approximations on specific manifolds (typically variants of Lie-Euler on the sphere), with an extrinsic convergence analysis, and no high order theory.
%The first stochastic Lie-group and Magnus methods appeared in the context of strong approximations in \cite{Burrage99rkm,Burrage99hso}.
%A major additional difficulty compared to the deterministic case lies in the approximation of iterated stochastic integrals involved, often tackled by the so-called Castell-Gaines approach \cite{Newton91aer,Castell93aeo,Castell95aea} (similar to Wong-Zakai approximations). These are coupled with RKMK methods in \cite{Malham08slg}.
%We also mention the works \cite{Curry14asi, Davidchack15nla, Marjanovic15asa, Ebrahimi15tel, Ableidinger17wsr, Marjanovic18nmf, Cheng22eso} that use stochastic Lie-group methods on specific manifolds (always with the RKMK approach) or algebraic tools linked to it.
%
In the recent work~\cite{Bharath23sae}, a new approach for sampling~\eqref{equation:Langevin} is developed using the following intrinsic integrator, called Riemannian Langevin method
%\begin{equation}
%\label{equation:Riem_Langevin}
\[X_{n+1}=\exp^{\text{Riem}}(-h\nabla V(X_n)+ \sqrt{2h}\xi_{n+1}) X_n,\]
%\end{equation}
where $\exp^{\text{Riem}}$ is the geodesic exponential for the Levi-Civita connection $\nabla$.
This method has order one for solving \eqref{equation:Langevin} in the weak sense and for the invariant measure.
One could be tempted to say that such a method is exact for $V=0$, but it still has only order one, in contrast to the Euclidean setting where it is exact in law.
%This method is very versatile as it does not require any additional structure on the manifold beyond the Riemannian one, that is necessary for the formulation of ergodicity.

We introduce the first intrinsic methods of high weak order for solving \eqref{equation:SDE_Strato} using only evaluations of $F$ and the frame.
In particular, we highlight the following new explicit integrator of second weak order for solving \eqref{equation:SDE_Strato}, and using the minimal number of evaluations of $F$ per step:
\begin{align*}
H_n&=\exp\bigg(\sum_{d=1}^D \Big(\frac{1}{2} hf^d(X_n)+\sqrt{h}\xi^{d,1}_n \Big) E_d\bigg) X_n\\
X_{n+1}&=
\exp\bigg(\sum_{d=1}^D \Big(
\big(\frac{\sqrt{2}}{2}-1\big) hf^d(X_n)+ (2-\sqrt{2}) hf^d(H_n)
+\big(1-\sqrt{2}\big)\sqrt{h}\xi^{d,1}_n+\sqrt{h}\xi^{d,2}_n \Big)E_d
\bigg)\\&
\exp\bigg(\sum_{d=1}^D \Big(
\big(1-\frac{\sqrt{2}}{2}\big) hf^d(X_n)+ (\sqrt{2}-1) hf^d(H_n)
+\sqrt{2h}\xi^{d,1}_n \Big)E_d
\bigg)
X_n.
\end{align*}
where the $\xi^{d,l}_n$ are independent standard Gaussian random variables (or discrete symmetric approximations with matching moments up to order 4) and we use frozen flows (see Section \ref{section:new_FF_integrators}).
An originality of our approach is that we present a general framework for the analysis of a new class of stochastic Lie-group methods at any order on any manifold in the weak sense using a new algebraic formalism similar to the one of Butcher series. The new methods outperform in accuracy, and versatility the previous attempts.

The calculations of the order conditions are intricate and require the use of appropriate algebraic tools. We propose a new extension of exotic and Lie-Butcher series, called exotic Lie series, for the systematic computation of order conditions of the new frozen flow methods at any order in the weak sense.
While Butcher trees and series \cite{Hairer06gni,Butcher16nmf,McLachlan17bsa,Butcher21bsa} represent naturally Taylor expansions of deterministic flows in $\R^D$, planar trees and Lie-Butcher series were introduced for the study of flows on manifolds \cite{Iserles00lgm, Owren06ocf, MuntheKaas08oth} and later extended for the study of the connection algebra \cite{MuntheKaas13opl, Ebrahimi15otl, AlKaabi22aao, Grong23pla, MuntheKaas23lat}.
On the other hand, the formalism of exotic series was introduced in \cite{Laurent20eab,Laurent21ata,Bronasco22ebs} for the creation of integrators for solving stochastic dynamics with additive noise with high order in the weak sense and for the invariant measure.
This formalism, combined with the aromatic B-series \cite{Chartier07pfi,Iserles07bsm,Bogfjellmo22uat,Laurent23tab,Laurent23tld}, was extended in \cite{Laurent21ocf} into the exotic aromatic series formalism for the numerical integration of SDEs on embedded manifolds with projection methods.
The geometric and algebraic properties of the exotic formalism of trees were later studied in \cite{Bronasco22cef, Laurent23tue} (see also \cite{Chartier10aso,McLachlan16bsm,MuntheKaas16abs,Bogfjellmo19aso} in the deterministic setting).
The modern approach to such algebraic formalism relies extensively on Hopf algebras \cite{Chartier10aso, Calaque11tih, Lundervold11hao, Bogfjellmo19aso, Bronasco22ebs, Rahm22aoa, Bronasco22cef, Ebrahimi24aso, Busnot25osr} that we identify here in the context of stochastic numerics on manifolds.

The paper is organised the following way.
We present in Section \ref{section:preliminaries} the main numerical results of this paper: a robust framework for the Riemannian convergence analysis of stochastic integrators on manifolds, the new stochastic frozen flow methods, and the associated order conditions for high weak order.
Section \ref{section:exotic_Lie_series} introduces the formalism of planar decorated and exotic forests and translates the geometric operations in the connection algebra in terms of algebraic operations on graphs. This approach is then applied for the explicit description at any order of the weak Taylor expansion of the exact flow of \eqref{equation:SDE_Strato}, of the new frozen flow methods, and of their associated order conditions.
The Hoph algebra structure of exotic planar forests is further detailed in Section \ref{section:decorated_exotic_forests}. In particular, we generalise the Munthe-Kaas-Wright Hopf algebra \cite{MuntheKaas08oth} to the stochastic context.
%Section \eqref{section:reduced_order_conditions} presents reduced order conditions and the collection of new integrators, with an emphasis on the case where the $f^{p,i}$ are constant (additive noise w.r.t.\ts the frame).
%The methodology, relying on the algebraic operation of integration by parts of forests, and the integrators for sampling the invariant measure of Riemannian Langevin dynamics \eqref{equation:Langevin} are presented in Section \eqref{section:high_order_invariant_measure}.
Numerical experiments confirm the theoretical findings in Section \ref{section:numerical_experiments} and outlooks and future works are presented in Section \ref{section:conclusion}.

\section{New frozen flow methods of high weak order}
\label{section:preliminaries}

We generalise the standard Euclidean framework for weak convergence in the context of Riemannian manifolds, and we introduce new frozen flow integrators and their order conditions for high weak order. We emphasize that the following convergence analysis is new, even for the underlying deterministic differential equations (see also \cite{Curry20col}).

\subsection{Stochastic differential equations on manifolds}
\label{section:exact_solution}

For $v\in T\MM$, we denote $|v| = g(v,v)^{1/2}$, its Riemannian norm. Let $o$ an arbitrary point on $\MM$. For $x\in\MM$, we define $r(x)=d(o,x)$ the Riemannian distance function. This function is $1$-Lipschitz and the squared distance $r^2$ is smooth on $\MM\setminus Cut_o$. This allows us to define moments of a $\MM$-valued random variable.

Let us introduce the main classes of test functions. The space $\CC^\infty_c(\MM)$ denotes the class of smooth compactly supported functions. For $\phi\in\CC^\infty_c(\MM)$ we denote $d\phi$ its differential and $|d\phi|$ its Riemannian norm on $T^*\MM$, inherited from the norm on $T\MM$. We denote $v[\phi](x)$ the differential of a test function $\phi$ in a direction $v\in T_x\MM$.
%It is always possible to define a semigroup, potentially non-Markovian, associated to the diffusion $X$ on this set.
We shall use the wider class of test functions $\CC^{\infty}_P(\MM)$, which consists in smooth functions satisfying polynomial growth estimates of the form:
\[ \abs{E_{d_q}[\dots E_{d_1}[\phi]\dots]}(x)\leq C(1+r(x)^K),\quad q=0,1,\dots.\]
Given a vector field $F$ and a fixed decomposition in the frame $F=\sum f^dE_d$, we say that $F\in\mathfrak{X}_P(\MM)$ if its components are Lipschitz continuous, $\abs{df^d}\leq C$, and satisfy: $f^d\in \CC^{\infty}_P(\MM)$, $d=1,\dots,D$.
We mention that considering test functions and vector fields with polynomial growth of their derivatives up to a given order $p+2$ would be sufficient for the creation of numerical schemes of order up to $p$.
% We consider $Q=\infty$ for simplicity.
%
%The covariant derivatives of functions are defined with the usual tensor definition and are written $D^q \phi$.
%The norm of the $q$-th derivative of a test function $\phi$ is
%\[
%\abs{D^q\phi}(x):=\sup_{v_1,\dots,v_q\in T_x\MM} \frac{\abs{D^q\phi(x)(v_q,\dots,v_1)}}{\abs{v_1}\dots \abs{v_q}}.
%\]
%\moda{Define norme d operateur ?}

\begin{ass}
\label{assumption:regularity}
The vector fields $E_1$,\dots, $E_D$ are smooth and bounded. The vector field $F$ is in $\mathfrak{X}_P(\MM)$.
Moreover, there exist constants $\nu \geq1$ and $\lambda\in\R$ such that on $M\setminus Cut_o$, we have
\begin{equation}
\label{equation:croissance_L}
\LL r^2 \leq \nu + \lambda r^2,
\end{equation}
where $\LL$ is the generator
%\begin{equation}
%\label{equation:generator}
\[\LL\phi
=F[\phi]
+\sum_{d=1}^D E_{d}[E_{d}[\phi]].\]
%\end{equation}
\end{ass}

Under the smoothness assumption on $F$, $E_1$,\dots, $E_D$, there exists a unique solution of \eqref{equation:SDE_Strato}, up to a stopping time $\tau$. Following \cite{Hsu02sao}, we recall that a stochastic process $X$, defined on a stochastic interval $[0,\tau]$, is a solution of \eqref{equation:SDE_Strato} if and only if for all test functions $\phi\in\CC^\infty_c(\MM)$, we have
\[
\phi(X(t))=\phi(X_0)+ \int_0^t F[\phi](X(t)) dt+ \sqrt{2}\sum_{d=1}^D \int_0^t E_d[\phi](X(t))\circ dW_d(t),\quad 0<t<\tau.
\]
Unlike the Euclidean case, where Lipschitz conditions are sufficient to avoid finite time explosion, on a manifold, a blow-up can be induced from the geometry, even for a Brownian motion (see \cite{Hsu02sao} for examples). The Lyapunov-like assumption \eqref{equation:croissance_L} ensures the completeness of the SDE. This means that the solution $X$ of \eqref{equation:SDE_Strato} is defined on $\R_+$, i.e. $\tau =+\infty$ a.s. In addition with the completeness, it provides finite moments estimates (even finite exponential moments) for the solution of the SDE. This assumption is adapted from \cite{Thompson20bma}. In the case where $(E_i)_i$ is a basis there exists a new metric $\tilde{g}$ such that $(E_i)_i$ is an orthonormal basis. Using this metric and the associated Levi-Civita connection $\tilde{\nabla}$, we find
\[\LL\phi = \Delta_{\tilde{g}}\phi + (F + \tilde{\nabla}_{E_i}E_i)[\phi],\]
which is the framework of \cite{Thompson20bma}. The general case is treated likewise, by extracting local basis on a cover of $\MM$.

There exist various simpler criteria which ensure that inequality \eqref{equation:croissance_L} is satisfied. The first criterion is compactness, which can be used for classical spaces such as the sphere $\S^n$ and the Lie group $\SO_n(\R)$. This condition being seldom satisfied, a second classical criterion is the Bakry-\'Emery criterion from \cite{Bakry86cne}. This criterion applies to the case of equation \eqref{equation:Langevin}, where the generator writes as $\LL=\Delta-\nabla V \cdot\nabla$. One says that the potential $V$ satisfies the criterion if there exists $\kappa\in\R$ such that  
\[\Ric + \Hess(V)\geq \kappa.\]
The term $\Ric$ designs the Ricci tensor on $(\MM,g)$ and is defined as the trace of the Riemann tensor. In this context, the symmetric operator $\Ric + \Hess(V)$ is called the Bakry-\'Emery curvature and the term $\Hess(V)$ can be interpreted as an additional curvature. Note that this handy criterion has been adapted in \cite{Antonyuk07nonexp} for equation \eqref{equation:SDE_Strato} with a lower bound of the operator $\Ric-\nabla F$. We also cite the moment conditions from \cite{Li94stochdiff} which imply stochastic completeness.

By a standard induction, we obtain the following estimates.
\begin{lemma}\label{prop:growth_L}
Under Assumption \ref{assumption:regularity}, for all $q\geq 1$ we have $\LL (r^{2q})\leq C(1+r^{2q})$.
\end{lemma}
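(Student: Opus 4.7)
The plan is a direct computation via the second-order Leibniz rule, reducing everything to Assumption~\ref{assumption:regularity} and the $1$-Lipschitz property of~$r$; no real induction is needed beyond that. Throughout, I work on $\MM\setminus Cut_o$, which is exactly where the hypothesis~\eqref{equation:croissance_L} is assumed and where $r^2$ (hence $r^{2q}$) is smooth.

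Applying the chain rule once to each $E_d$ and then again gives, for any smooth $\psi$,
\[
E_d[E_d[\psi^q]]=q(q-1)\psi^{q-2}(E_d[\psi])^2+q\psi^{q-1}E_d[E_d[\psi]],
\]
and similarly $F[\psi^q]=q\psi^{q-1}F[\psi]$. Summing over $d$ and specialising to $\psi=r^2$ yields the key identity
\[
\LL(r^{2q})=q\,r^{2(q-1)}\,\LL(r^2)+q(q-1)\,r^{2(q-2)}\sum_{d=1}^{D}(E_d[r^2])^2,
\]
which is the only nontrivial algebraic step.

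Next I would control each factor. The first term is handled by Assumption~\ref{assumption:regularity}, which gives $\LL(r^2)\leq \nu+\lambda r^2$, so $q\,r^{2(q-1)}\LL(r^2)\leq q\nu\,r^{2(q-1)}+q\lambda\,r^{2q}$. For the quadratic term, note $E_d[r^2]=2r\,E_d[r]$, and since $E_d$ is bounded (Assumption~\ref{assumption:regularity}) and $r$ is $1$-Lipschitz so that $|dr|\leq 1$ a.e., we obtain $|E_d[r^2]|\leq 2\,r\,|E_d|_g\leq C r$. Hence $(E_d[r^2])^2\leq C' r^2$ and
\[
q(q-1)\,r^{2(q-2)}\sum_{d=1}^{D}(E_d[r^2])^2\leq C''\,r^{2(q-1)}.
\]
Combining the two bounds and using the elementary inequality $r^{2(q-1)}\leq 1+r^{2q}$ (valid for $q\geq 1$) collapses everything into the desired form $\LL(r^{2q})\leq C_q(1+r^{2q})$.

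The only point that requires a moment of care is the regularity of $r^{2q}$: $r^2$ fails to be $\CC^2$ precisely on $Cut_o$, so strictly speaking the computation above is pointwise only on $\MM\setminus Cut_o$. However, this matches the domain on which Assumption~\ref{assumption:regularity} is formulated, and the resulting estimate extends to all of $\MM$ in the distributional sense used in Section~\ref{section:exact_solution} (as in~\cite{Thompson20bma}). I expect this regularity-at-the-cut-locus bookkeeping to be the main, though routine, obstacle; the algebra itself is straightforward.
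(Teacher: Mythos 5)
Your proposal is correct and follows essentially the same route as the paper: your key identity is exactly the diffusion chain rule $\LL\eta(u)=\eta'(u)\LL u+\eta''(u)\Gamma(u)$ with $u=r^2$, $\eta(x)=x^q$, and $\Gamma(u)=\sum_d|E_d[u]|^2$, which is precisely the $\Gamma$-calculus formula the paper invokes. You merely spell out the bounding of the two resulting terms (via \eqref{equation:croissance_L}, the boundedness of the $E_d$, and the $1$-Lipschitz property of $r$) that the paper leaves implicit.
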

\begin{proof}
The proof comes from classical $\Gamma$ calculus for diffusions (see \cite{Bakry14aag} for instance). For all $u\in\CC^2(\MM)$ and $\eta\in\CC^2(\R)$, we have
\[\LL\phi(u) = \eta'(u)\LL u + \eta''(u)\Gamma(u),\]
where 
\[\Gamma(u) =\sum_{i=1}^D |E_i[u]|^2.\]
Applying this formula to $u= r^2$ and $\eta : x\mapsto x^q$, we obtain the result.
\end{proof}

A key tool for the weak expansion of the exact flow is the Kolmogorov equation. Under Assumption \ref{assumption:regularity}, the SDE \eqref{equation:SDE_Strato} generates a semigroup, well-defined on $\CC^\infty_P(M)$, and Markovian. Namely, for all $\phi\in\CC^\infty_c(M)$, the function $u(t,x)=\E[\phi(X(t))|X(0)=x]$ satisfies the Kolmogorov equation
\begin{equation}
\label{equation:Kolmogorov_u}
\partial_t u=\LL u, \quad u(0,x)=\phi(x).
\end{equation}

In order to compare an expansion of the exact solution an expansion of a numerical integrator, we need more  regularity on the semigroup itself. Since the generator $\LL$ is elliptic, the semigroup $u$ is $\CC^\infty$ on $]0,T[\times\MM$. We further assume that the semigroup preserves the $\CC^\infty_P$ regularity.

\begin{ass}[Regularity]
\label{assumption:semigroupregularity}
For all test function $\phi\in\CC^{\infty}_P(\MM)$, the semigroup \eqref{equation:Kolmogorov_u} satisfies $u\in \CC^\infty\left(]0,T[,\CC^{\infty}_P(\MM)\right)$, that is, for all $k\geq0$, there exist $C>0$ and $\kappa\geq0$, depending on $k$, $T$, and $\phi$ such that for all $x\in\MM$
\[\sup_{t\in]0,T[}|\partial^k_t u(t,x)|\leq C\left(1+r^{\kappa}(x)\right).\]
\end{ass}

Note that Assumption \ref{assumption:semigroupregularity} is satisfied automatically in $\R^d$ with our regularity assumptions. For clarity, we leave the study of sufficient conditions to obtain polynomial growth estimates of the semigroup for future works.

We are now able to derive the Taylor expansion of the semigroup.
\begin{proposition}
Under Assumption \ref{assumption:semigroupregularity}, for all $\phi\in\CC^{\infty}_P(M)$ and all $h\leq h_0$ small enough, $u$ has the Taylor expansion
\begin{equation}
\label{equation:dvp_exact}
u(h,x) = \phi(x) + \sum_{j=1}^{p} \frac{h^j}{j!} \LL^{j}\phi(x)+ h^{p+1}R_p^h(\phi,x),\quad x\in \MM,
\end{equation}
with $\abs{R_p^h(\phi,x)}\leq C(1+r(x)^K)$, where $C$ and $K$ do not depend on $x$ and $h$. 
\end{proposition}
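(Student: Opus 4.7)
The plan is to combine the Kolmogorov equation \eqref{equation:Kolmogorov_u} with Taylor's theorem with integral remainder, using Assumption \ref{assumption:semigroupregularity} to control the remainder.

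First, I would identify the time derivatives of $u$ with iterated applications of $\LL$. By Assumption \ref{assumption:semigroupregularity}, $u$ is smooth in $t$ on $]0,T[$, so differentiating \eqref{equation:Kolmogorov_u} in $t$ and iterating yields
\[\partial_t^k u(t,x) = \LL^k u(t,x), \qquad t\in\,]0,T[, \ k\geq 0.\]
Since $\phi \in \CC^\infty_P(\MM)$ and both $F$ and the $E_d$ have coefficients whose derivatives grow polynomially (Assumption \ref{assumption:regularity} together with the definition of $\mathfrak{X}_P$), we get $\LL^k\phi\in\CC^\infty_P(\MM)$ for every $k$. Using the semigroup property $u(t,\cdot) = P_t\phi$ and the fact that $\LL^k$ commutes with $P_t$ on $\CC^\infty_P$, we have $\partial_t^k u(t,x) = P_t(\LL^k\phi)(x)$, which extends continuously to $t=0^+$ with value $\LL^k\phi(x)$.

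Next, I would apply Taylor's theorem with integral remainder in the time variable. On any interval $[\varepsilon, h]\subset\,]0,T[$ the function $t\mapsto u(t,x)$ is $\CC^{p+1}$, and letting $\varepsilon\to 0^+$ (justified by the continuous extension above) gives
\[u(h,x) = \phi(x) + \sum_{j=1}^p \frac{h^j}{j!}\LL^j\phi(x) + \int_0^h \frac{(h-s)^p}{p!}\,\partial_t^{p+1}u(s,x)\,ds.\]
Setting $R_p^h(\phi,x) = h^{-(p+1)}\int_0^h \frac{(h-s)^p}{p!}\,\partial_t^{p+1}u(s,x)\,ds$ recovers the stated expansion.

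Finally, the remainder is controlled directly by Assumption \ref{assumption:semigroupregularity}: taking $k=p+1$ provides constants $C>0$ and $\kappa\geq 0$ such that $\sup_{s\in\,]0,T[}|\partial_t^{p+1}u(s,x)|\leq C(1+r(x)^\kappa)$. Integrating over $s\in[0,h]$ and dividing by $h^{p+1}$ yields $|R_p^h(\phi,x)|\leq \frac{C}{(p+1)!}(1+r(x)^\kappa)$, uniformly for $h\leq h_0$, which is the claimed bound with $K=\kappa$.

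The main obstacle is the handling of the boundary $t=0$: Assumption \ref{assumption:semigroupregularity} only gives smoothness on the open interval $]0,T[$, so the identification $\partial_t^k u(0,x)=\LL^k\phi(x)$ and the application of Taylor's formula on $[0,h]$ must be justified by a limiting argument exploiting the semigroup structure and the stability of $\CC^\infty_P$ under $\LL$. Everything else is a routine Taylor expansion once this identification is in place.
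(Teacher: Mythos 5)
Your proof is correct and follows exactly the argument the paper relies on: the paper states this proposition without proof, treating it as the standard consequence of iterating the Kolmogorov equation \eqref{equation:Kolmogorov_u}, applying Taylor's theorem with integral remainder in $t$, and invoking Assumption \ref{assumption:semigroupregularity} with $k=p+1$ to bound the remainder. You correctly identify and handle the only delicate point, namely passing from the open interval $]0,T[$ to $t=0^+$ via the uniform derivative bounds and the pointwise continuity of the semigroup on $\CC^\infty_P$, so nothing is missing.
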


\subsection{Weak order theory on Riemannian manifolds}

Following the Euclidean works \cite{Talay90eot, Milstein04snf}, we present a general theory for achieving high order weak estimates for intrinsic methods. The approach does not rely on an embedding and does not assume compactness of $\MM$. In order to obtain estimates, we use the Riemannian structure of $\MM$. This structure is not needed to perform the calculations of order conditions of the new methods.

\begin{definition}
An intrinsic one-step integrator of the form \eqref{equation:integrator} is of local weak order $p$ if for all $\phi\in\CC^{\infty}_P(\MM)$, there exists $C>0$ such that the following estimate holds for all $x\in\MM$ and all $h\leq h_0$ small enough,
%\begin{equation}
%\label{equation:local_weak_order}
\[\abs{\E[\phi(X_1)|X_0=x]-\E[\phi(X(h))|X_0=x]}\leq C(1+r(x)^K)h^{p+1}.\]
%\end{equation}
Similarly, the integrator is of global weak order $p$ if for all $\phi\in \CC^{\infty}_P(\MM)$, $T>0$, $Nh=T$, $h\leq h_0$ small enough, and $x\in \MM$, the following estimate is satisfied:
%\begin{equation}
%\label{equation:global_weak_order}
\[\sup_{n=0,\dots,N}\abs{\E[\phi(X_n)]-\E[\phi(X(nh))]}\leq Ch^{p}.\]
%\end{equation}
\end{definition}

%A key tool for the weak expansion of the exact flow is the backward Kolmogorov equation.
%\begin{proposition}[Antoniouk???]
%\label{proposition:antoniouk}
%Under Assumptions ???, the law $u(t,x)=\E[\phi(X(t))|X(0)=x]$ of the solution $X(t)$ of \eqref{equation:SDE_Strato} with $\phi\in \CC^{\infty}_P(\MM)$, $p\geq 0$???, satisfies
%\begin{equation}
%\label{equation:Kolmogorov}
%\partial_t u=\LL u, \quad u(0,x)=\phi(x),
%\end{equation}
%where $u\in \CC^\infty((0,\infty),\CC^{\infty}_P(\MM))$??? and the generator $\LL$ is
%\begin{equation}
%\label{equation:generator}
%\LL\phi
%=\sum_{d=1}^D f^{d}E_d[\phi]
%+\frac{1}{2} \sum_{d=1}^D E_{d}[E_{d}[\phi]].
%\end{equation}
%Moreover, $u$ has the Taylor expansion for all $h\leq h_0$ small enough
%\begin{equation}
%\label{equation:dvp_exact}
%u(h,x) = \phi(x) + \sum_{j=1}^{p} \frac{h^j}{j!} \LL^{j}\phi(x)+ h^{p+1}R_p^h(\phi,x),\quad x\in \MM,
%\end{equation}
%with $\abs{R_p^h(\phi,x)}\leq C(1+r(x)^K)$.
%\end{proposition}
%
%
%\begin{proof}
%\moda{we need a proof of the bound of the remainder in the spirit of Talay (see also his french paper Discrétisation d’une équation différentielle
%stochastique et calcul approché d’espérances
%de fonctionnelles de la solution)}
%
%
%A Taylor expansion with integral remainder yields \eqref{equation:dvp_exact} with
%\[
%R_p^h(\phi,x)=\int_0^1 \frac{(1-t)^p}{p!} \LL^{p+1} u(th,x)dt.
%\]
%As \moda{$\LL \phi\in\CC^{Q}_P$ for $\phi\in \CC^{Q+2}_P$}, we find $R_p^h(\phi,x)\in\CC^{0}_P$.
%\end{proof}

Let us assume that the integrator satisfies a Taylor expansion, often called Talay-Tubaro expansion \cite{Talay90sod} in the Euclidean context, that is similar to \eqref{equation:dvp_exact}.
%The operators in the Taylor expansion typically write in the infinite jet bundle in $F$ and the $E_d$.
\begin{ass}
\label{assumption:dvp_num}
For all $\phi \in \CC^{\infty}_P(\MM)$ and all $h\leq h_0$ small enough, the numerical integrator~\eqref{equation:integrator} has a weak Taylor expansion of the form
\begin{equation}
\label{equation:dvp_num}
\E[\phi(X_1)|X_0=x] = \phi(x) + \sum_{j=1}^{p} h^j \AA_{j}\phi(x)+ h^{p+1}R_p^h(\phi,x),\quad x\in \MM,
\end{equation}
where the remainder satisfies~$\abs{R_p^h(\phi,x)}\leq C(1+r(x)^K)$ and the~$\AA_j$ are linear differential operators.
\end{ass}

We adapt the standard assumption of bounded moments in the Riemannian context. We shall give sufficient assumptions on the integrator in Subsection \ref{section:new_FF_integrators} so that this assumption is satisfied.
\begin{ass}
\label{assumption:moments}
The numerical integrator~\eqref{equation:integrator} has bounded moments of any order $K\geq 0$,
\begin{equation}
\label{equation:moments}
\sup_{n=0,\dots,N} \E[r(X_n)^K] \leq C(T), \quad Nh=T.
\end{equation}
\end{ass}

Under the above assumptions, we obtain the following convergence result, in the spirit of the Euclidean theory \cite{Talay90sod, Talay90eot, Milstein04snf}.
\begin{theorem}
\label{theorem:weak_high_order}
Consider a one-step integrator of the form~\eqref{equation:integrator} for solving \eqref{equation:SDE_Strato}. Under Assumptions \ref{assumption:regularity}, \ref{assumption:semigroupregularity}, \ref{assumption:dvp_num}, and \ref{assumption:moments}, if the Taylor expansion \eqref{equation:dvp_num} of the integrator satisfies
\begin{equation}
\label{equation:condition_TT_high_order}
\AA_j=\frac{1}{j!}\LL^j,\quad j=1,\dots p,
\end{equation}
then the integrator has global weak order $p$.
%for $T>0$, $Nh=T$ with $h\leq h_0$ small enough, $x\in \MM$, and $\phi\in \CC^{\infty}_P(\MM)$, the following estimate is satisfied:
%\begin{equation}
%\label{equation:global_weak_order}
%\abs{\E[\phi(X_N)-\phi(X(T))]}\leq C(\phi)h^{p}.
%\end{equation}
\end{theorem}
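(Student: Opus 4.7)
The plan is to adapt the classical Talay--Tubaro telescoping argument to the intrinsic Riemannian setting. I would introduce the semigroup $u(t,x) = \E[\phi(X(t))\mid X(0)=x]$, which by Assumption \ref{assumption:semigroupregularity} lies in $\CC^\infty(]0,T[, \CC^{\infty}_P(\MM))$ and satisfies the Kolmogorov equation $\partial_t u = \LL u$ with initial datum $\phi$. Fix $T = Nh$ and, exploiting the Markov property of both the exact flow and the one-step integrator, I would write the global error as the telescoping sum
\[
\E[\phi(X_N)] - \E[\phi(X(T))] = \sum_{k=0}^{N-1} \E\Big[\E[u(T-(k+1)h, X_{k+1}) \mid X_k] - u(T-kh, X_k)\Big],
\]
so that it suffices to control each summand by a local error estimate and sum.

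Next I would compare two Taylor expansions at the point $X_k$, both pushed to order $p+1$. On the exact side, I Taylor-expand $u(T-kh, X_k) = u((T-(k+1)h) + h, X_k)$ in time, using $\partial_t^j u = \LL^j u$, to obtain
\[
u(T-kh, X_k) = \sum_{j=0}^{p} \frac{h^j}{j!}\, \LL^j u(T-(k+1)h, X_k) + h^{p+1} \tilde{R}_p^h(X_k),
\]
the remainder being controlled by a polynomial in $r(X_k)$ thanks to Assumption \ref{assumption:semigroupregularity}. On the numerical side, I apply the weak Taylor expansion of Assumption \ref{assumption:dvp_num} to the test function $x \mapsto u(T-(k+1)h, x) \in \CC^{\infty}_P(\MM)$, yielding
\[
\E[u(T-(k+1)h, X_{k+1}) \mid X_k] = u(T-(k+1)h, X_k) + \sum_{j=1}^{p} h^j \AA_j u(T-(k+1)h, X_k) + h^{p+1} R_p^h(X_k).
\]
Under the hypothesis $\AA_j = \frac{1}{j!}\LL^j$, the two expansions cancel term by term up to order $p$, leaving a local error bounded by $C h^{p+1}(1+r(X_k)^K)$.

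Finally, taking expectation and invoking the moment bound from Assumption \ref{assumption:moments}, each summand is $O(h^{p+1})$ uniformly in $k$, and summing the $N = T/h$ such terms yields the desired global weak order estimate $O(h^p)$. The main technical obstacle is ensuring that the constants $C$ and the polynomial exponent $K$ in both Taylor remainders are uniform in the time parameter $T-(k+1)h$ running through $[0, T-h]$ and hence uniform in $k \in \{0,\dots,N-1\}$; this is precisely the role of Assumption \ref{assumption:semigroupregularity}, which guarantees time-uniform polynomial growth of all time derivatives of $u$ (and, via the Kolmogorov equation, of the relevant spatial derivatives appearing in the remainder of Assumption \ref{assumption:dvp_num}). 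The boundary case $k = N-1$, where $u(0,\cdot) = \phi$, is harmless since $\phi \in \CC^{\infty}_P(\MM)$ by hypothesis.
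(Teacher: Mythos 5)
Your proposal is correct and follows essentially the same route as the paper: the identical telescoping decomposition (your index $k$ corresponds to the paper's $N-i$), the use of Assumption \ref{assumption:semigroupregularity} to ensure the propagated test function $u(T-(k+1)h,\cdot)$ stays in $\CC^{\infty}_P(\MM)$ with uniform constants, cancellation of the two Taylor expansions via $\AA_j=\frac{1}{j!}\LL^j$, and the moment bound of Assumption \ref{assumption:moments} to sum the $N$ local errors into an $O(h^p)$ global bound. Your write-up is in fact slightly more explicit than the paper's about where the term-by-term cancellation occurs.
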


\begin{proof}
Let $X^x(t_n)$ be the exact solution of \eqref{equation:SDE_Strato} at time $t_n=nh$ starting from $x\in\MM$ and $X_n^x$ be the n-th step of the numerical solution.
The global error satisfies, thanks to a telescopic sum argument,
\[
\E[\phi(X_N)-\phi(X(T))]
=\sum_{i=1}^N \E[\tilde{\phi}(X_1^{X^x_{N-i}})-\tilde{\phi}(X^{X^x_{N-i}}(h))], \quad \tilde{\phi}(x)= \E[\phi(X^x(t_{i-1})].
\]
Assumption \ref{assumption:semigroupregularity} yields that $\tilde{\phi}\in\CC^{\infty}_P(\MM)$.
Then, the local order $p$ condition, together with Assumptions \ref{assumption:dvp_num} and \ref{assumption:moments} give
\begin{align*}
\E[\phi(X_N)-\phi(X(T))]
&\leq \sum_{i=1}^N Ch^{p+1}\left(1+\E\left[r(X^x_{N-i})^K\right]\right)
\leq CNh^{p+1}\leq Ch^p.
\end{align*}
Hence, the global weak order $p$.
\end{proof}

\subsection{Stochastic frozen flow integrators}
\label{section:new_FF_integrators}

The deterministic Lie-group methods are divided into two categories that correspond to the two kinds of coordinates: the Runge-Kutta-Munthe-Kaas (RKMK) methods (see \cite{MuntheKaas95lbt, MuntheKaas97nio, MuntheKaas98rkm, MuntheKaas99hor}) and the frozen flow methods. We choose the latter option in this paper for the sake of generality, as the RKMK approach relies on the existence of a transitive group action on $\MM$.
The frozen flow methods originally appeared as Crouch-Grossmann methods \cite{Crouch93nio, Owren99rkm}. The generalisation presented in the papers \cite{Celledoni03cfl, Owren06ocf} allowed the creation of more efficient methods, called commutator-free Lie-group methods.

The methods rely heavily on the concept of frozen vector fields and flows: given a decomposition of a (potentially random) vector field in the frame $G=\sum_d g^d E_d$, the frozen vector field $G_x\in \mathfrak{X}(\MM)$ at point $x\in \MM$ is
\[G_x(p)=\sum_d g^d(x) E_d(p).\]
The solution of the ODE $y'(t)=G(y(t))$ with $y(0)=p$ is denoted $\exp(tG)p$. The frozen flow $\exp(tG_x)p$ is the exact flow driven by the frozen vector field $G_x$.

\begin{lemma}
If $G\in\mathfrak{X}_P(\MM)$ then, the frozen flow $\exp(tG_x)p$ is globally defined. Moreover, it satisfies
\begin{equation}\label{equation:frozenbound}
d\left(p, \exp(tG_x)p\right) \leq C\left(1+r(x)\right)
\end{equation}
\end{lemma}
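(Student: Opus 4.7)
The plan is to show that the frozen vector field $G_x$ is a smooth vector field whose Riemannian norm is bounded uniformly in $p$ by a quantity of the form $C(1+r(x))$, and then invoke standard ODE theory on complete Riemannian manifolds to conclude both global existence and the desired distance estimate.

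First I would bound $G_x$ pointwise. Writing $G=\sum_d g^d E_d$, the assumption $G\in\mathfrak{X}_P(\MM)$ gives $|dg^d|\le C$, so each $g^d$ is globally $C$-Lipschitz on $\MM$. Fixing an arbitrary base point $o$ and using that $r$ is $1$-Lipschitz, this yields
\[|g^d(x)|\le |g^d(o)| + C\,r(x) \le C(1+r(x)),\qquad x\in\MM.\]
Since the frame vector fields $E_d$ are bounded in Riemannian norm by Assumption \ref{assumption:regularity}, we obtain, for every $p\in\MM$,
\[|G_x(p)|\le \sum_{d=1}^D |g^d(x)|\,|E_d(p)| \le C(1+r(x)).\]
Crucially, this bound is uniform in $p$: once $x$ is fixed, $G_x$ is a smooth vector field on $\MM$ whose Riemannian norm is a constant (depending only on $x$).

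Next I would deduce global existence of the flow. Let $y(t)=\exp(tG_x)p$ on its maximal interval of existence. Then $y$ is absolutely continuous with $|y'(t)|=|G_x(y(t))|\le C(1+r(x))$, so the length of $y$ on any interval $[0,t]$ is at most $C(1+r(x))\,t$. By Hopf-Rinow, $(\MM,g)$ being complete implies that closed Riemannian balls are compact, so the trajectory $y$ remains in a fixed compact set on any bounded time interval. Standard ODE theory then excludes finite-time blow-up, so the flow is globally defined on $\R$.

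Finally, the distance estimate follows immediately from the length bound: for $t$ in a bounded interval (the constant $C$ absorbing $t$),
\[d\bigl(p,\exp(tG_x)p\bigr) \le \int_0^{t}|G_x(y(s))|\,ds \le C(1+r(x)),\]
which is \eqref{equation:frozenbound}. There is no real obstacle here; the only subtle point is translating the hypothesis $|dg^d|\le C$ on the components of $G$ into linear growth of $|g^d|$ via the $1$-Lipschitz property of $r$, after which the boundedness of the frame and the completeness of $\MM$ do all the work.
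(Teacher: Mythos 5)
Your proof is correct and follows essentially the same route as the paper: bound the speed of the curve $t\mapsto\exp(tG_x)p$ by $C(1+r(x))$ using the boundedness of the frame and the linear growth of the components $g^d$, then integrate to get the distance estimate. Your global-existence step via Hopf--Rinow and compactness of closed balls is slightly more detailed than the paper's one-line appeal to globally Lipschitz coefficients, but it is the same idea in substance.
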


\begin{proof}
The coefficients of the ODE being globally Lipschitz continuous, there exists a global solution. Let us denote $\gamma : t\mapsto \exp(tG_x)p$. It is a smooth curve, joining $p$ and $\exp(G_x)p$. As $|E_d|$ are bounded, $\gamma$ has a bounded speed
\[ \abs{\dot{\gamma}_t}^2 \leq D\sum_{d=1}^D\abs{g^d(x)}^2|E_d(\gamma_t)|^2\leq C(1+r^2(x)).\]
Therefore, we have
\[d(p,\exp(tG_x)p) \leq \int_0^t \abs{\dot{\gamma}_s}\, ds\leq C(1+r(x))\] 
Hence, the result.
\end{proof}

\begin{remark}
\label{remark:FF_formulation}
The frozen vector fields depend on a choice of frame $(E_d)$, but the definitions of the frozen vector field, and thus, of the frozen flow, do not depend on the choice of a decomposition of $G$ in this frame.
Indeed, given two decomposition of $G$ in the frame $(E_d)$ 
\[G(p) = \sum_{d=1}^D g_d(p) E_d(p) =\sum_{d=1}^D  \hat{g}_d(p) E_d(p),\] 
we define two, presumably different, frozen vector fields $G_x$ and $\hat{G}_x$ associated to these  decompositions. Now, let us extract a basis $(E_d)_{d\in I}$ of the frame valid in an open set $U\subset \MM$.
The vector field $G_x$ and $\hat{G}_x$ decompose uniquely in $U$ as
\[G_x(p)=\sum_{d\in I} \alpha_d E_d(p),\quad \hat{G}_x(p) =\sum_{d\in I} \hat{\alpha}_d E_d(p).\] 
As $G(x)=G_x(x) = \hat{G}_x(x)$, we obtain $\alpha_d=\hat{\alpha}_d$ and $G_x=\hat{G}_x$.
Therefore, the formulation of the integrators is independent of the choice of a decomposition in the frame.
\end{remark}

The central assumption that motivates the use of these methods is that while computing the exact flow of a SDE is difficult, computing the flow associated to a frozen vector field often is a much simpler problem for a good choice of frame. For instance, in $\R^D$ with the standard frame basis $E_i=\partial_i$, the frozen flow is the Euler method: $\exp(G_x)p=p+G(x)$.
A second example is given by homogeneous manifolds. In this context, a natural choice of frame is derived from a basis $(A_d)$ of the Lie algebra $\mathfrak{g}$ by $E_d(y)=A_d\cdot y$, where we use for simplicity the same notation $\cdot$ for the Lie group and Lie algebra action on $\MM$.
On matrix homogeneous manifolds, the frozen flow is explicitly given by
\[\exp(G_x)p=\Exp\Big(\sum_d g^{d}(x)A_d\Big) p,\]
where $\Exp$ is the matrix exponential.
%We emphasize that the approach works on any smooth manifold and not just on Lie-groups, provided that the frozen flows can be computed.
We emphasize that a significant part of the manifolds used in numerical experiments and applications of stochastic dynamics on manifolds are homogeneous manifolds, such as Lie-groups, spheres, the Stiefel manifold, symmetric positive definite matrices, \dots
We further discuss the implementation of the methods in Section \ref{section:practical_implementation}.

We consider the following new class of stochastic frozen flow methods, in the spirit of \cite{Owren99rkm} for the deterministic part and \cite{Rossler04rkm, Debrabant10rkm, Laurent20eab} for the stochastic part,
\begin{align}
H^i_n=\exp&\bigg(\sum_{d=1}^D \Big(h\sum_{j=1}^s Z^{0}_{i,j,K} f^{d}(H^j_n)+ \sqrt{h} Z^{d}_{i,K} \Big) E_{d} \bigg)\dots \nonumber\\
&\dots\exp\bigg(\sum_{d=1}^D \Big(h\sum_{j=1}^s Z^{0}_{i,j,1} f^{d}(H^j_n)+ \sqrt{h} Z^{d}_{i,1} \Big) E_{d} \bigg) X_n, \nonumber\\
\label{equation:def_CGsto}
X_{n+1}=\exp&\bigg(\sum_{d=1}^D \Big(h\sum_{i=1}^s z^{0}_{i,K} f^{d}(H^i_n)+ \sqrt{h} z^{d}_{K} \Big) E_{d} \bigg)\dots  \\
&\dots\exp\bigg(\sum_{d=1}^D \Big(h\sum_{i=1}^s z^{0}_{i,1} f^{d}(H^i_n)+ \sqrt{h} z^{d}_{1} \Big) E_{d} \bigg) X_n, \nonumber
\end{align}
where the coefficients $z^{0}_{i,K}$, $Z^{0}_{i,j,k}$ are deterministic and $z^{d}_{k}$, $Z^{d}_{i,k}$ are of the form
\[
z^{d}_{k}=\sum_{l=1}^L \hat{z}_{k,l} \xi^{d,l}_n,\quad Z^{d}_{i,k}=\sum_{l=1}^L \hat{Z}_{i,k,l} \xi^{d,l}_n,
\]
where the $\xi^{d,l}$ are independent standard Gaussian random variables or approximations of Gaussian random variables with the same $2p$ first moments, vanishing odd moments, and finite moments of all order. The dependence in $n$ of the Runge-Kutta coefficients $z^{d}_{k}$, $Z^{d}_{i,k}$ is omitted for clarity, and we enforce that the random coefficients between two steps in time are independent. The constant $K$ is the number of exponentials used per stage. It is well-known \cite{Iserles00lgm} that choosing $K=1$ does not allow to reach arbitrarily high order already in the context of ODEs.
In particular, the simplest method is the frozen flow Euler method
\begin{equation}
\label{equation:Euler_FF}
X_{n+1}=\exp\bigg(\sum_{d=1}^D \Big(h f^{d}(X_n)+ \sqrt{2h} \xi^d_n \Big) E_{d} \bigg) X_n.
\end{equation}
%where the $\xi^d_n$ are independent standard Gaussian random variables (or approximations with finite moments and same mean and variance).
%More generally, we are specifically interested in explicit frozen flow methods, that is, when solving the system \eqref{equation:def_CGsto} does not require solving a fixed-point problem.

The assumptions of Subsection \ref{section:exact_solution} are in general not sufficient to ensure bounded moments and bounds on the Talay-Tubaro remainder of the numerical integrators. Aside from the compactness assumption, the following assumption is sufficient.

\begin{proposition}
\label{proposition:well_posedness_integrator}
Assume that the frame $E_d$, $d=1,\dots,D$, and the vector field $F$ satisfy Assumption \ref{assumption:regularity} and
\[
\abs{E_{d_1}[E_{d_1}[ f^d]]}\leq C.
\]
Assume further that for an infinite number of arbitrarily large $q$:
\[
\abs{E_{d_k} [\dots E_{d_1} [r^q]\dots]}\leq C(1+r^{q-k}), \quad k\leq 4.
\]
Then, all consistent explicit frozen flow methods of the form \eqref{equation:def_CGsto} satisfy the bounded moments property and the Talay-Tubaro expansion of Assumptions \ref{assumption:dvp_num} and \ref{assumption:moments}.
\end{proposition}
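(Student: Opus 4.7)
The plan is to reduce both claims to a single one-step Taylor expansion in $h$ of $\E[\psi(X_1)\mid X_0=x]$ that is uniform in $\psi\in\CC^\infty_P(\MM)$. The starting point is the flow identity
\[
\psi(\exp(tG_x)p) = \sum_{k=0}^{m} \frac{t^k}{k!} G_x^{k}[\psi](p) + \frac{1}{m!}\int_0^t (t-s)^m G_x^{m+1}[\psi](\exp(sG_x)p)\,ds,
\]
valid for every smooth $\psi$ and frozen vector field $G_x$. For each exponential in \eqref{equation:def_CGsto} the frozen vector field has the form $G^{(k)}_x=\sum_d (h a^{(k)}_d+\sqrt{h}\, b^{(k)}_d)E_d$ with $a^{(k)}_d$, $b^{(k)}_d$ polynomial in $h$ and in the Gaussian draws $\xi^{d,l}_n$, and I would compose the Taylor expansions of the $K$ consecutive frozen flows while recursively expanding the stage-dependent coefficients $f^d(H^j_n)$ around $X_n=x$ by the same device. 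This produces $\phi(X_1)$ as a polynomial in $\sqrt{h}$ whose coefficients are linear combinations of iterated derivatives $E_{d_1}\dots E_{d_k}[\phi](x)$ weighted by products of the $\xi^{d,l}_n$, plus an explicit integral remainder.

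Taking expectation kills the odd moments and replaces the even moments by deterministic constants (matching Gaussian values by hypothesis), leaving only integer powers of $h$: this is the Talay-Tubaro expansion of Assumption~\ref{assumption:dvp_num}. The remainder involves derivatives of $\phi$ of bounded order evaluated along intermediate frozen-flow trajectories; by \eqref{equation:frozenbound}, these trajectories stay within Riemannian distance $C(1+r(x))$ of $x$, and since $r$ is $1$-Lipschitz, the hypothesis $\phi\in\CC^\infty_P(\MM)$ converts to a uniform bound $C(1+r(x)^K)$. The extra assumption $|E_{d_1}[E_{d_1}[f^d]]|\leq C$ is what controls the iterated derivatives produced when the coefficients $f^d(H^j_n)$ are themselves Taylor-expanded along the recursive frozen flows.

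For Assumption~\ref{assumption:moments} I would apply the same one-step expansion to $\psi=r^{2q}$, with $q$ taken from the infinite sequence of arbitrarily large exponents provided by the hypothesis. Consistency of the scheme forces $\AA_1=\LL$, and Lemma~\ref{prop:growth_L} yields $\LL r^{2q}\leq C(1+r^{2q})$; the higher-order operators and the remainder involve at most fourth-order iterated $E_d$-derivatives of $r^{2q}$, which by assumption satisfy $|E_{d_k}\cdots E_{d_1}[r^{2q}]|\leq C(1+r^{2q-k})$ for $k\leq 4$. Combining these estimates yields
\[
\E\bigl[r(X_{n+1})^{2q}\mid X_n\bigr]\leq r(X_n)^{2q}+Ch\bigl(1+r(X_n)^{2q}\bigr)
\]
for every $h\leq h_0$ small enough. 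A discrete Gronwall argument extends this to a uniform-in-$n$ bound $\sup_{n\leq T/h}\E[r(X_n)^{2q}]\leq C(T)(1+r(x)^{2q})$, and the monotonicity of $L^K$ norms then covers every $K\geq 0$. The main obstacle is the bookkeeping of the remainder: composing $K$ frozen flows, each expanded to order $p+1$ in $\sqrt{h}$, produces a large number of cross-terms combining derivatives of $\phi$, of $f^d$, and of $r^{2q}$, and the fourth-order polynomial growth hypothesis on the $E_{d_k}\cdots E_{d_1}[r^q]$ is exactly tuned to absorb all of them into a single $C(1+r(x)^K)$ bound uniformly in $h$.
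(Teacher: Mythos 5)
Your proposal is correct and follows essentially the same route as the paper: a one-step Taylor expansion of the composed frozen flows with integral remainder, the second-derivative bound on $f^d$ to control the stage values $f^d(H^j_n)$, the bound \eqref{equation:frozenbound} together with the fourth-order derivative hypothesis on $r^q$ to absorb the remainder, and finally the expansion applied to powers of the distance function plus a discrete Gr\"onwall argument for the bounded moments. The only cosmetic difference is that the paper carries out the computation on a representative two-stage method and defers the full order-$p$ expansion to Theorem \ref{theorem:expansion_numerical_flow}, whereas you propose to track the general $K$-exponential composition directly; the key estimates are used identically.
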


\begin{proof}
Let us first show that the first remainder in the expansion \eqref{equation:dvp_num} acts on the distance function by 
\begin{equation}\label{equation:sqrtdist_num}
\abs{R_1^h(r^{q},x)}\leq C(1+r(x)^{q}),
\end{equation}
for an infinite number of arbitrarily large $q$.
Without loss of generality, we consider the following method, where $G_{y}=\sum_{d=1}^D g^d_y E_d$ and $g^d_y=h f^{d}(y)+ \sqrt{2h} \xi^d_n$,
\[X_{n+1}=\exp(G_{H_n})X_n,\quad H_{n}=\exp(G_{X_n})X_n.\]
A Taylor expansion around $x=X_0$ yields (see Section \ref{section:exotic_Lie_series} for the detailed calculations),
\begin{align*}
f^d(H)&=f^d(x)+ G_x[\phi](x)+\int_0^1 (1-t) g^{d_1}_x g^{d_2}_x E_{d_2} [E_{d_1} [f^d]](\exp(t G_x)x)dt,
\end{align*}
and we obtain
\[
\abs{f^d(H)}\leq C(\xi^d)(1+r(x)),\quad
\abs{\E[f^{d}(H)-f^{d}(x)]}\leq Ch(1+r(x)),
\]
where $C(\xi^d)$ is a polynomial in $\abs{\xi^d}$.
Then, a Taylor expansion for $\phi=r^q$ yields
\begin{align*}
\E[\phi(X_{1})]&=\phi(x)+h\LL\phi(x)+h^2 R_1^h(\phi,x)\\
R_1^h(\phi,x)&=\E[
\frac{1}{h}(f^{d}(H)-f^{d}(x))E_d[\phi](x)
+\frac{1}{2}f^{d_1}(H) f^{d_2}(H) E_{d_2}[E_{d_1}[\phi]](x)\\
&+\frac{1}{3}f^{d_1}(H)\xi^{d_2}\xi^{d_3} (E_{d_3}[E_{d_2}[E_{d_1}[\phi]]]+E_{d_3}[E_{d_1}[E_{d_2}[\phi]]]+E_{d_1}[E_{d_3}[E_{d_2}[\phi]]])(x)\\
&+\frac{h}{6}f^{d_1}(H) f^{d_2}(H)f^{d_3}(H) E_{d_3}[E_{d_2}[E_{d_1}[\phi]]](x)\\
&+\frac{1}{h^2}\int_0^1 \frac{(1-t)^3}{3!}g^{d_1}_Hg^{d_2}_Hg^{d_3}_Hg^{d_4}_HE_{d_4} [E_{d_3}[E_{d_2} [E_{d_1} [\phi]]]](\exp(tG_H)x)dt].
\end{align*}
The estimate on $R_1^h(\phi,x)$ is straightforwardly obtained from \eqref{equation:frozenbound}. Indeed, one shows that the distance function satisfies
\[
r(\exp(tG_H)x)\leq C(\xi^d)(1+r(x)),
\]
where $C(\xi^d)$ is a polynomial in $\abs{\xi^d}$.

The expansion \eqref{equation:dvp_num} is computed explicitly in Theorem \ref{theorem:expansion_numerical_flow}.
The estimate on the remainder is obtained thanks to the regularity Assumption \ref{assumption:semigroupregularity}.
For Assumption \ref{assumption:moments}, we use the Talay-Tubaro expansion \eqref{equation:dvp_num} applied to $\phi=r^{q}$:
\[
\E[r(X_{n+1})^{q}] = \E[r(X_n)^{q}] + h \E[\LL r(X_n)^q]+ h^2 \E[R^h_1(r^q,X_n)],
\]
where $\abs{\E[\LL\phi(X_n)]}\leq C(1+\E[r(X_n)^{q}])$ by Proposition \ref{prop:growth_L} and $\abs{\E[R^h_1(r^q,X_n)]}\leq C(1+\E[r(X_n)^{q}])$ by equation \eqref{equation:sqrtdist_num} (for $q$ large enough).
Thus, we find
\[
\E[r(X_{n+1})^{q}] \leq C(1+h)\E[r(X_n)^{q}] + Ch,
\]
and a standard discrete Gr\"onwall argument yields the bounded moments \eqref{equation:moments}.
\end{proof}

\subsection{High-order frozen flow integrators}
\label{section:order_conditions_FF}

Let us present the order conditions of the new class of frozen flow methods \eqref{equation:def_CGsto} for solving the stochastic dynamics \eqref{equation:SDE_Strato}, as well as an example of second order frozen flow integrator.

We use the notation of factorial of multi-indices and sums for writing the order conditions.
\begin{definition}
If $\textbf{k}\in \{1,\dots,K\}^n$ is a multi-index that takes $r_1$ times the value $k_1$, \dots, $r_p$ times the value $k_p$, the factorial of $\textbf{k}$ is
\[
\textbf{k}!=r_1!\dots r_p!.
\]
The factorial sum is given for a finite set $S\subset \{1,\dots,K\}^n$ of multi-indices and a functional $f$ by 
$
\sum_{\textbf{k}\in S}^! f(\textbf{k})
=\sum_{\textbf{k}\in S} \frac{1}{\textbf{k}!} f(\textbf{k}).$
\end{definition}

The weak order conditions for stochastic Runge-Kutta methods in $\R^d$ are presented in \cite{Rossler04rkm, Debrabant10rkm, Laurent20eab, Bronasco22ebs} and require 4 order conditions for weak order two.
On embedded manifolds, the approach with projection methods of \cite{Laurent21ocf} yields 28 order conditions, applies only in codimension one, and relies on an extrinsic approach with tedious calculations (see \cite[App.\ts D]{Laurent21ocf}).
The approach proposed here is intrinsic, versatile, generalises the order theory of Lie-group methods to the stochastic setting, and yields a surprisingly low number of order conditions for weak order two.

\begin{theorem}
\label{theorem:weak_order_conditions}
Let a frozen flow integrator of the form \eqref{equation:def_CGsto} that
%with exotic series expansion $S_h(a)$ with $a\in \Char(\EF)$. If $a(\pi)=e(\pi)$ for all $\pi\in\PEF$, $\abs{\pi}\leq r$, the integrator is at least of local weak order $r$ for solving \eqref{equation:SDE_Strato}, that is, it satisfies the estimate \eqref{equation:global_weak_order}.
satisfies the order conditions in Table \ref{table:weak_order_conditions}, then, under Assumption \ref{assumption:semigroupregularity} and the assumptions of Proposition \ref{proposition:well_posedness_integrator}, the integrator has local weak order two for solving \eqref{equation:SDE_Strato}. The additional order conditions for weak order three are presented in Appendix \ref{app:order_cond_3}.
\end{theorem}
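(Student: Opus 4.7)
The plan is to reduce the claim to matching two Taylor expansions, and then to a finite list of algebraic identities on the method's coefficients. Under the stated hypotheses, Proposition \ref{proposition:well_posedness_integrator} supplies Assumption \ref{assumption:dvp_num}, so the numerical one-step operator admits the Talay-Tubaro expansion \eqref{equation:dvp_num}. Comparing with the exact flow expansion \eqref{equation:dvp_exact}, the definition of local weak order reduces the theorem to verifying the two operator identities $\mathcal{A}_1 = \mathcal{L}$ and $\mathcal{A}_2 = \frac{1}{2}\mathcal{L}^2$, the polynomial growth factor $(1+r(x)^K)$ being absorbed by the remainder bounds already supplied by both expansions.

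\textbf{Expansion of the exact generator powers.} First I would write $\mathcal{L} = F + \sum_d E_d E_d$ and compute $\mathcal{L}^2$ as a sum of iterated frame derivatives acting on the test function, with coefficients built from products of the components $f^d$ and their frame derivatives. Using the bijection between such differential monomials and exotic planar forests from Section \ref{section:exotic_Lie_series}, I would encode $\mathcal{L}\phi$ and $\frac{1}{2}\mathcal{L}^2\phi$ as finite linear combinations of exotic-forest elementary differentials with explicit rational coefficients; the loops in the exotic forests correspond exactly to the contractions produced by the second-order diffusion operator $\sum_d E_d E_d$.

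\textbf{Expansion of the numerical one-step operator.} Next I would invoke Theorem \ref{theorem:expansion_numerical_flow} (whose content in this case is the calculation announced in the proof of Proposition \ref{proposition:well_posedness_integrator}) to Taylor-expand the composition of $K$ frozen-flow exponentials defining \eqref{equation:def_CGsto}. Each exponential contributes iterated frame derivatives weighted by polynomials in $h^{1/2}$ whose stochastic content involves products of $z^d_k$ and $Z^d_{i,k}$; sub-expanding $f^d(H^j_n)$ around $X_n$ and taking expectation uses the moment-matching assumptions on the $\xi^{d,l}$ (pairs of identical stochastic indices collapse via second moments, odd moments vanish), which produces precisely the loop structure of the exotic forests. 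The resulting operator $\mathcal{A}_j$ is thus again a linear combination of the same exotic-forest elementary differentials, now weighted by factorial sums of products of the Runge-Kutta coefficients $z^0_{i,k}$, $Z^0_{i,j,k}$, $\hat z_{k,l}$, and $\hat Z_{i,k,l}$.

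\textbf{Matching and main obstacle.} Since the exotic-forest elementary differentials are linearly independent (generically in the frame and the vector field $F$, which is precisely the reason the formalism is introduced in Section \ref{section:exotic_Lie_series}), the identities $\mathcal{A}_j = \frac{1}{j!}\mathcal{L}^j$ for $j=1,2$ reduce to a finite list of scalar equations indexed by the exotic forests of order up to two; reading off this list yields exactly Table \ref{table:weak_order_conditions}. The main obstacle is the bookkeeping in the third step: composing $K$ stage exponentials whose internal coefficients themselves depend on the intermediate stages $H^j_n$, sub-expanding, and taking expectation produces a priori an intractable sum of differential monomials, and the crucial point is that the decorated planar-forest machinery of Section \ref{section:exotic_Lie_series} organises this expansion into the same finite basis as the exact flow, making the term-by-term identification possible.
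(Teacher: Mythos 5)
Your proposal is correct and follows essentially the same route as the paper, which derives Theorem \ref{theorem:weak_order_conditions} directly from the exotic Lie series expansions of the exact flow (Theorem \ref{theorem:expansion_exact_flow}, giving $e$) and of the numerical flow (Theorem \ref{theorem:expansion_numerical_flow}, giving $a$), matching coefficients forest by forest up to order two and using Proposition \ref{proposition:well_posedness_integrator} for the remainder bounds. The only cosmetic remark is that for the sufficiency direction actually asserted here, the linear independence of the elementary differentials is not needed (equality of coefficients trivially implies equality of the operators $\AA_j$ and $\frac{1}{j!}\LL^j$); independence would only be required to claim the conditions are necessary.
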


%\moda{Eugen: I am a bit confused here. Shouldn't $Z$ have two superscripts and three subscripts, and $z$ have two subscripts? Also, shouldn't $i$ and $j$ go up to $d$ and not $s$?}
\begin{figure}[ht]
\begin{longtable}{C|C|C}
\text{Exotic forest } \pi &\text{Differential } \F(\pi)[\phi] &\text{Order condition } a(\pi)=e(\pi)\\\hline
\forest{b} & f^i E_i[\phi] & z^{0}_{i,k_1}=1\\
\forest{1,1} & E_{d_1}[E_{d_1}[\phi]] & \sum^!_{k_1\geq k_2} \E[z^{d_1}_{k_2}  z^{d_1}_{k_1}] = 1\\
\hline
\forest{b[b]} &  f^jE_{j}[f^{i}] E_{i}[\phi] & z^{0}_{i,k_2} Z^{0}_{i,j,k_1} = \frac{1}{2}\\
\forest{b[1,1]} & E_{d_1}[E_{d_1}[f^{i}]] E_{i}[\phi] & \sum^!_{k_2\geq k_3} \E[Z^{d_1}_{i,k_3} Z^{d_1}_{i,k_2}] z^{0}_{i,k_1} = \frac{1}{2}\\
\forest{b,b} & f^j f^i E_{j}[E_{i}[\phi]] & \sum^!_{k_1\geq k_2} z^{0}_{j,k_2} z^{0}_{i,k_1} = \frac{1}{2} \\
\forest{b[1],1} & E_{d_1}[f^{i}] E_{i}[E_{d_1}[\phi]] & \sum^!_{k_1\geq k_2} z^{0}_{i,k_2} \E[Z^{d_1}_{i,k_3} z^{d_1}_{k_1}] = 0 \\
\forest{1,b[1]} & E_{d_1}[f^{i}] E_{d_1}[E_{i}[\phi]] & \sum^!_{k_2\geq k_1} z^{0}_{i,k_2} \E[Z^{d_1}_{i,k_3} z^{d_1}_{k_1}]  = 1 \\
\forest{b,1,1} & f^{i} E_{i}[E_{d_1}[E_{d_1}[\phi]]] & \sum^!_{k_1\geq k_2\geq k_3} z^{0}_{i,k_3} \E[z^{d_1}_{k_2} z^{d_1}_{k_1}] = \frac{1}{2} \\
\forest{1,b,1} & f^{i} E_{d_1}[E_{i}[E_{d_1}[\phi]]] & \sum^!_{k_1\geq k_3\geq k_2} z^{0}_{i,k_3} \E[z^{d_1}_{k_2} z^{d_1}_{k_1}] = 0 \\
\forest{1,1,b} & f^{i} E_{d_1}[E_{d_1}[E_{i}[\phi]]] & \sum^!_{k_3\geq k_1\geq k_2} z^{0}_{i,k_3} \E[z^{d_1}_{k_2} z^{d_1}_{k_1}] = \frac{1}{2} \\
\forest{2,2,1,1} & E_{d_2}[E_{d_2}[E_{d_1}[E_{d_1}[\phi]]]] & \sum^!_{k_1\geq k_2\geq k_3\geq k_4}  \E[z^{d_2}_{k_4} z^{d_2}_{k_3}] \E[z^{d_1}_{k_2} z^{d_1}_{k_1}] = \frac{1}{2} \\
\forest{2,1,2,1} & E_{d_2}[E_{d_1}[E_{d_2}[E_{d_1}[\phi]]]] & \sum^!_{k_1\geq k_2\geq k_3\geq k_4} \E[z^{d_2}_{k_4} z^{d_2}_{k_2}] \E[z^{d_1}_{k_3} z^{d_1}_{k_1}] = 0 \\
\forest{1,2,2,1} & E_{d_1}[E_{d_2}[E_{d_2}[E_{d_1}[\phi]]]] & \sum^!_{k_1\geq k_2\geq k_3\geq k_4} \E[z^{d_2}_{k_3} z^{d_2}_{k_2}] \E[z^{d_1}_{k_4} z^{d_1}_{k_1}] = 0\\
\caption{Order conditions of frozen flow methods up to weak order 2. The sums on the indices that do not satisfy inequalities (except the $d_i$) are omitted for clarity. The order conditions do not depend on the dimension of the problem.}
\label{table:weak_order_conditions}
\end{longtable}
\end{figure}

Theorem \ref{theorem:weak_order_conditions} is a direct consequence of Theorem \ref{theorem:expansion_exact_flow} and Theorem \ref{theorem:expansion_numerical_flow}, postponed to Section \ref{section:exotic_Lie_series}. The approach relies on a new algebraic framework of exotic planar forests and series that gives the explicit expression of the operators $\AA_j$ in \eqref{equation:condition_TT_high_order}. It allows us to provide the order conditions for arbitrarily high weak order.
%We emphasize that these are general order conditions for the approximation of general stochastic dynamics \eqref{equation:SDE_Strato} on general smooth Riemannian manifold. Depending on the manifold or on the vector field $F$, the order conditions may face degeneracies, so that simpler methods might exist in specific cases.

\begin{remark}
\label{remark:shuffle_identities}
We shall prove in Section \ref{section:Butcher_expansion_flows} that the coefficient maps $a$ and $e$ of the numerical and exact flow are characters of the shuffle algebra over exotic forests $(\EF,\shuffle,\Delta_\cdot)$.
Following \cite{Owren06ocf}, an important consequence is that the order conditions are not independent. Up to second order, the following "shuffle relations" are satisfied (and similarly for $e$):
\begin{align*}
a(\forest{b})^2&=2a(\forest{b,b}),\\
a(\forest{b})a(\forest{1,1})&=a(\forest{b,1,1})+a(\forest{1,1,b})+a(\forest{1,b,1}),\\
a(\forest{1,1})^2&=2a(\forest{2,2,1,1})+2a(\forest{2,1,2,1})+2a(\forest{1,2,2,1}).
\end{align*}
The number of order conditions is not given by the number of entries in Table \ref{table:weak_order_conditions}, as one has to take into account the shuffle relations.
In particular, there are 2 conditions for order 1, 8 conditions for order 2, and 73 conditions for order 3 (against 2, 11, and 95 exotic forests of order 1,2, and 3).
This is further discussed in Remark \ref{rmk:shuffle_relations}.
\end{remark}

%\begin{remark}
%The calculations of order conditions in Section \ref{section:exotic_Lie_series} do not use the Riemannian structure. Further, they use the curvature-free right-invariant connection in the context of homogeneous manifolds, and not the Levi-Civita connection.
%The Riemannian structure is used only for obtaining error estimates and the weak convergence of the numerical scheme.
%One wonders if an error analysis that relies only on a manifold equipped with a distance is possible, and if one can create new integrators using only the Levi-Civita connection in the calculation of order conditions. This is ongoing work.
%\end{remark}

Let us now provide an example of a second order method.
%Let us now make more precise the random variables used in the frozen flow integrators.
Let $\xi^{d,1}_n$, $\xi^{d,2}_n$, $d=1,\dots,D$, be bounded independent random variables satisfying
\[\E[(\xi^{d,l}_n)^2]=1, \quad \E[(\xi^{d,l}_n)^4]=3, \quad \E[(\xi^{d,l}_n)^{2q+1}]=0, \quad q=0,1,2,\dots.\]
For weak order two, one can use for instance
\[
\P(\theta=0)=\frac{2}{3},\quad \P(\theta=\pm \sqrt{3})=\frac{1}{6}
\]
so that the first four moments coincide with the ones of standard Gaussians.
%Then, at each step, we consider coefficients $z_k$ and $Z_{i,k}$ as linear combinations of the vectors $\xi^{l}$.
% This ansatz allows to reduce the order conditions of Theorem \ref{theorem:weak_order_conditions} in this case and find the corresponding coefficients to reach order 2.

%\begin{remark}
%We use for simplicity Gaussian random variables $\xi^d_n$, $\tilde{\xi}^d_n$ for the analysis as they are tightly linked to the Isserlis theorem \cite{Isserlis18oaf} and allow to use characters coefficient maps for the series analysis of the order conditions. They can however be replaced with bounded random variables sharing the same first moments as Gaussian random variables.
%For weak order two, one can use for instance
%\[
%\P(\theta=0)=\frac{2}{3},\quad \P(\theta=\pm \sqrt{3})=\frac{1}{6}
%\]
%so that the first four moments coincide with the ones of standard Gaussians.
%While the expansion of an integrator of the form \eqref{equation:def_CGsto} with such random variables with matching moments of order up to $2p$ is not an exotic series anymore, the truncation of the Taylor expansion up to order $p$ is a truncated exotic series, so that the analysis extends straightforwardly with non-Gaussian random variables.
%\end{remark}

The new method is the following. It is explicit, intrinsic, uses two evaluations of the vector field $F$ per step, and three frozen flow exponentials.
\begin{algorithm}[H]
\renewcommand{\thealgorithm}{SFF2}
%\caption{One-step implicit $\invariant$-preserving integrator of weak order two to approximate the solution of equation \eqref{equation:NLS} at times $\varepsilon T_{NM}$ for $M=0, 1, 2,\ldots,M_0$ with cost $\OO(M_0)$ independent of $\varepsilon$ and $N$.}
\caption{(New explicit frozen flow integrator of weak order two for equation \eqref{equation:SDE_Strato})
}
\label{algorithm:geometric_weak_order_2}
\begin{algorithmic}
\STATE $X_0\in\MM$
\FOR{$n=0,\dots, N$}
\STATE
\vspace{-5 mm}
\begin{align}
H^1_n&=X_n \nonumber\\
\label{equation:new_weak_method}
H^2_n&=\exp\bigg(\sum_{d=1}^D \Big(\frac{1}{2} hf^d(H^1_n)+\sqrt{h}\xi^{d,1}_n \Big) E_d\bigg) X_n\\
X_{n+1}&=
\exp\bigg(\sum_{d=1}^D \Big(
\big(\frac{\sqrt{2}}{2}-1\big) hf^d(H^1_n)+ (2-\sqrt{2}) hf^d(H^2_n)
+\big(1-\sqrt{2}\big)\sqrt{h}\xi^{d,1}_n+\sqrt{h}\xi^{d,2}_n \Big)E_d
\bigg) \nonumber\\&
\exp\bigg(\sum_{d=1}^D \Big(
\big(1-\frac{\sqrt{2}}{2}\big) hf^d(H^1_n)+ (\sqrt{2}-1) hf^d(H^2_n)
+\sqrt{2h}\xi^{d,1}_n \Big)E_d
\bigg)
X_n. \nonumber
\end{align}
\ENDFOR
\end{algorithmic}
\end{algorithm}

%\textbf{New explicit frozen flow integrator of weak order two:}
%We propose the following new method. It uses two evaluations of $f$ per step, three exponentials, and is explicit:
%\begin{align}
%H^1_n&=X_n \nonumber\\
%\label{equation:new_weak_method}
%H^2_n&=\exp\bigg(\sum_{d=1}^D \Big(\frac{1}{2} hf^d(H^1_n)+\frac{\sqrt{2}}{2}\sqrt{h}\xi^d_n \Big) E_d\bigg) X_n\\
%X_{n+1}&=
%\exp\bigg(\sum_{d=1}^D \Big(
%\big(\frac{\sqrt{2}}{2}-1\big) hf^d(H^1_n)+ (2-\sqrt{2}) hf^d(H^2_n)
%+\big(\frac{\sqrt{2}}{2}-1\big)\sqrt{h}\xi^d_n+\frac{\sqrt{2}}{2}\sqrt{h}\tilde{\xi}^d_n \Big)E_d
%\bigg) \nonumber\\&
%\exp\bigg(\sum_{d=1}^D \Big(
%\big(1-\frac{\sqrt{2}}{2}\big) hf^d(H^1_n)+ (\sqrt{2}-1) hf^d(H^2_n)
%+\sqrt{h}\xi^d_n \Big)E_d
%\bigg)
%X_n. \nonumber
%\end{align}

\begin{ex}
\label{example:BD}
Brownian dynamics on $\MM$ are defined as the solution of
%\begin{equation}
%\label{equation:Brownian_dynamics}
\[dX(t)=-\frac{1}{2}\sum_{d=1}^D (\nabla_{E_d}E_d)(X(t))dt
+\sum_{d=1}^D E_d(X(t)) \circ dW_d(t),\]
%\end{equation}
where $E_d$ is an orthonormal frame of $\MM$.
In the specific case of a Lie group equipped with a bi-invariant metric, we find $\sum_{d} \nabla_{E_d}E_d=0$ and a second order integrator is
\begin{equation}
\label{equation:new_Brownian_method}
X_{n+1}=
\exp\bigg(\sum_{d=1}^D \Big(\big(\frac{\sqrt{2}}{2}-1\big)\sqrt{h}\xi^{d,1}_n+\frac{\sqrt{2}}{2}\sqrt{h}\xi^{d,2}_n\Big) E_d\bigg)
\exp\bigg(\sum_{d=1}^D \sqrt{h}\xi^{d,1}_n E_d\bigg)
X_n.
\end{equation}
\end{ex}

\section{Order theory of stochastic frozen flow methods}
\label{section:exotic_Lie_series}

This section is devoted to the formalisation of the order theory of stochastic frozen flow methods with the help of the new algebraic formalism of exotic Lie series governing the Taylor expansions of the exact flow and the frozen flow integrators.

\subsection{Connection algebra of vector fields}
\label{section:connection_algebra}

The theory of the frozen flow methods relies on an additional geometric structure on $\MM$. The idea is to define an affine connection $\triangleright$ on $TM$ whose geodesics are frozen flows. Such a connection is given by the choice of a right inverse to $(x,v)\in\MM\times\R^D\mapsto v_d E_d(x) \in TM$, i.e. a choice of decomposing vector fields in the frame $(E_d)$. A convenient way to choose such an inverse is to extract a basis $(E_d)_{d\in I}$ from the frame. Then, the connection $\triangleright$ is uniquely defined by 
\[\triangleright E_d =0, \quad d\in I.\]
The extraction of a basis is licit only locally. However, for a given initial value, the new integrators (with bounded random variables) evolve in one step in an open neighbourhood $U$ of $\MM$. Taking the stepsize small enough, we can extract a frame basis from the frame $(E_d)$.
Following Remark \ref{remark:FF_formulation}, we do not lose any generality in assuming that $(E_d)$ is a frame basis: the methods do not depend on the choice of a basis, but the Taylor expansion in the frame bundle does. We shall write our statements globally on $\MM$ for the sake of simplicity, and all of them are valid in a neighbourhood of the starting point of the method.

%\moda{We need more details here. The methods and the associated operations do not depend on the way we extract a frame basis. But we define a connection and we Taylor expand stuff in the frame bundle, and these depend heavily on the choice of basis.
%As soon as the expansions are applied to a test function, nothing depends on the choice of basis anymore (Meh).
%Moreover, if the frame is an orthonormal basis, the Levi-Civita connection is the Cartan-Schouten connection associated to our connection (I think, maybe it is another name).
%}

The new connection can be seen as a product $\triangleright\colon \mathfrak{X}(\MM)\times \mathfrak{X}(\MM)\rightarrow \mathfrak{X}(\MM)$ given by
\[
F\triangleright G =\sum_{d=1}^D F[g^d] E_d, \quad G=\sum_{d=1}^D g^d E_d.
\]
The product is well-posed as there is a unique decomposition of $G$ in the frame basis.
The connection $\triangleright$ can be seen as the Weitzenböck connection w.r.t.\ts the frame basis. We extend the notation to test functions: $F\triangleright \phi=F[\phi]$ (independent of the choice of connection).

\begin{remark}\label{remark://trans}
Given a vector field $G$, the frozen flow $t\mapsto\exp(tG(p))p$ is the geodesic for the connection $\triangleright$ starting at $p$ with direction $G(p)$. Moreover, the parallel transport of $G(p)$ along any curve going from $p$ to $q$ is given by the frozen vector field $G_p(q)$.
\end{remark}

This new connection $\triangleright$ defines a geometric structure far from the original structure associated to the metric and the Levi-Civita connection $\nabla$. Indeed, $\triangleright$ is neither compatible with the metric nor torsion-free. Let $\llbracket F,G \rrbracket$ denote the Jacobi bracket of vector fields. 
The torsion $T$ of the connection $\triangleright$ is defined as
\[T(F,G) = F\triangleright G - G\triangleright F - \llbracket F,G \rrbracket.\]
This tensor is entirely determined by $T(E_i,E_j) = -\llbracket E_i,E_j \rrbracket$, and does not vanish in general.
An additional difference between $\triangleright$ and $\nabla$ concerns the curvature.
As for a general connection, the curvature tensor $R$ is defined as
\[
R(F,G)H = F\triangleright(G\triangleright H) - G\triangleright(F\triangleright H) - \llbracket F,G \rrbracket \triangleright H,
\]
and it vanishes in our context, i.e. $R=0$. The flatness of $\triangleright$ explains the path-independence of parallel transport, raised in Remark \ref{remark://trans}.

\begin{remark}
In the deterministic context, the definition and analysis of Lie group methods were naturally tied to Lie group structures. For example, the connection $\triangleright$ naturally arise as the (-) Cartan connection associated to left/right-invariant vector fields. In this context, the torsion is constant and $-T(-,-)$ defines a Lie bracket, which naturally yields a post-Lie algebra structure \cite{Ebrahimi15otl,Grong23pla}. 
We emphasize that the frozen-flow methods, as well as their analysis, apply on any smooth manifold, not just on Lie groups.
\end{remark}

Let us now consider the associative non-commutative concatenation product $\cdot$ as the differential operator
\[
(F_n\cdots F_1)\triangleright \phi=\sum_{i_1,\dots, i_n} f_n^{i_n}\dots f_1^{i_1} E_{i_n}[ \dots  E_{i_1}[\phi]\cdots],
\]
defined on the tensor algebra $(T(\mathfrak{X}(\MM)),\cdot)$.
This product is called the frozen composition of differential operators.

We shall see that the Taylor expansions of the numerical and exact flows for the equation \eqref{equation:SDE_Strato} typically write in $T(\mathfrak{X}(\MM))$.
%, namely in the tensor space over the infinite jet space spanned by the frame basis and the vector field driving \eqref{equation:SDE_Strato} \cite{Olver93aol}.
The number and complexity of the differential operators involved in the Taylor expansions of numerical and exact flows motivate the introduction of decorated and exotic planar forests.

%$T(J^\infty_{F,E})$ as the smallest vector space containing $E_1(x)$, \dots, $E_D(x)$, $f^1(x)E_1(x)$, \dots, $f^D(x) E_D(x)$, that is stable with the action of $\triangleright$ and $[\blank,\blank]$.
%Then, $J^\infty_{F,E}$ is the infinite jet bundle associated to $F$ and the frame basis $(E_d)$ and $(\UU(J^\infty_{F,E}), \cdot, \Delta_\shuffle, \triangleright)$ is also a post-Hopf algebra.

\subsection{Decorated planar forests}
\label{section:decorated_forests}

We introduce decorated planar forests and decorated Lie series. 

\begin{definition}
  \label{def:FD}
    A \emph{decorated planar tree} $\tau$ is a connected, oriented graph in which every node has an outgoing edge, except for a node called the \emph{root}. It is equipped with a map $\alpha : V(\tau) \to D$ from the set of nodes to the set of decorations. An ordered, possibly empty, collection of decorated planar trees is called a \emph{decorated planar forest}. The sets of decorated planar trees and forests are denoted by $T_D$ and $F_D$, respectively.
\end{definition}

The vector spaces spanned by $T_D$ and $F_D$ are denoted by $\TT_D$ and $\FF_D$. Decorated planar trees $\TT_D$ are used as a convenient formalism for representing vector fields $\mathfrak{X}(\MM)$ that appear in a Taylor expansion of a numerical or exact solution of a differential equation, while decorated planar forests $\FF_D$ are used to represent differential operators $T(\mathfrak{X}(\MM))$ that appear in a Taylor expansion of a test function applied to a numerical or exact solution. We note that $\FF_D = T(\TT_D)$.

We use decorated planar forests $\FF_D$ with $D = \{ \bullet, \circ^1, \dots, \circ^L \}$ to represent the Taylor expansion of a test function applied to the one-step of a frozen flow method introduced in Section \ref{section:preliminaries}.
We consider a map $\dFdec : \FF_D \to T(\mathfrak{X}(\MM))$ defined as follows. 

\begin{definition}
  \label{def:dFdec}
  Let the map $\dFdec : \FF_D \to T(\mathfrak{X}(\MM))$ be defined for $\pi \in \FF_D$ and $\phi \in \CC^\infty(\MM)$ as
  \[ \dFdec (\pi) \triangleright \phi := \sum_{\underset{v \in V(\pi)}{i_v = 1}}^D \Bigg(\prod_{v \in V} E_{I_\Pi(v)} [\dFdec(v)^{i_v}]\Bigg) E_{I_R}[\phi], \]
  with $\dFdec(\bullet) = f$ and $\dFdec(\circ^l) = \xi^{d,l} E_d$ where $R$ is the set of roots of $\pi$, $V$ is the set of nodes of $\pi$, $\Pi(v)$ is the set of predecessors of $v$ ordered from right to left, $E_I[\phi] = E_{i_p}[ \dots E_{i_1}[\phi]\dots]$ for $I = (i_p, \dots, i_1)$. The notation can also be frozen for $x, p \in \MM$,
  \[ (\dFdec_x (\pi) \triangleright \phi)(p) := \sum_{\underset{v \in V(\pi)}{i_v = 1}}^D \Bigg(\prod_{v \in V} E_{I_\Pi(v)} [\dFdec(v)^{i_v}](x)\Bigg) E_{I_R}[\phi](p), \]
\end{definition}

For example, $\dFdec (\forest{b[w_1_90],b}) \triangleright \phi = \xi^{1,i} E_i [f^j] f^k E_{j}[E_{k} [\phi]]$.
Decorated Lie series are formal series of differential operators indexed by decorated planar forests. They do not converge in general, but their truncations at any order belong to $T(\mathfrak{X}(\MM))$.
\begin{definition}
\label{def:dec_series}
Given a one form $a \in \FF_D^*$, called a coefficient map, a decorated Lie S-series is the following formal power series in $h$ in $T(\mathfrak{X}(\MM))$,
\[
  S_h^{\text{dec}}(a) \triangleright \phi := \sum_{\pi \in F_D} h^{\abs{\pi}} a(\pi) \dFdec(\pi) \triangleright \phi, \quad S^{\text{dec}}(a)=S^{\text{dec}}_1(a).
\]
A decorated Lie series is only indexed on decorated trees and is the formal vector field,
\[
B_h^{\text{dec}}(a) := \sum_{\tau \in T_D} h^{\abs{\tau}} a(\tau) \dFdec(\tau), \quad B^{\text{dec}}(a)=B_1^{\text{dec}}(a).
\]
These series can also be frozen:
\[
S_{h,p}^{\text{dec}}(a) \triangleright \phi := \sum_{\pi \in F_D} h^{\abs{\pi}} a(\pi) \dFdec_p(\pi) \triangleright \phi, \quad B_{h,p}^{\text{dec}}(a) := \sum_{\tau \in T_D} h^{\abs{\tau}} a(\tau) \dFdec_p(\tau).
\]
\end{definition}

We endow the space $\FF_D$ with the shuffle product $\shuffle$ defined inductively for $\tau, \gamma \in \TT_D$ and $\pi, \eta \in \FF_D$ as
\[ \tau \pi \shuffle \gamma \eta  =  \tau (\pi \shuffle \gamma \eta) + \gamma (\tau \pi \shuffle \eta).  \]
For example,
\begin{align*}
   \forest{b[w_1_90]} \shuffle \forest{b,w_2_90} &= \forest{b[w_1_90],b,w_2_90} + \forest{b,b[w_1_90],w_2_90} + \forest{b,w_2_90,b[w_1_90]}, \\
   \forest{b[w_1_90],w_1_90} \shuffle \forest{b,w_2_90} &= \forest{b[w_1_90],w_1_90,b,w_2_90} + \forest{b[w_1_90],b,w_1_90,w_2_90} + \forest{b[w_1_90],b,w_2_90,w_1_90} + \forest{b,b[w_1_90],w_1_90,w_2_90} + \forest{b,b[w_1_90],w_2_90,w_1_90} + \forest{b,w_2_90,b[w_1_90],w_1_90}.
\end{align*}
A case of particular interest is when the coefficient map $a$ is a character over $(\FF_D,\shuffle)$, that is, when $a(\pi_1\shuffle \pi_2)=a(\pi_1)a(\pi_2)$.
This condition is satisfied by the frozen flow, as expressed in Proposition \ref{prop:a_character}.

\begin{proposition}
  \label{prop:a_character}
    Let $\tilde{a}\in \TT_D^*$, the frozen flow associated to the vector field $B^{\text{dec}}(\tilde{a})$ satisfies
    \[\phi(\exp(B_p^{\text{dec}}(\tilde{a})))=\exp^\cdot(B^{\text{dec}}(\tilde{a}))\triangleright \phi=S^{\text{dec}}(a)\triangleright \phi,\]
    where $a\in\FF_D^*$ extends $\tilde{a}$ to a character of $(\FF_D,\shuffle)$, that is, $a(\pi_1\shuffle \pi_2)=a(\pi_1)a(\pi_2)$. Moreover,
    \begin{equation}
      \label{eq:a_character}
    a(\tau_1\cdots \tau_n)=\frac{1}{n!}\tilde{a}(\tau_1)\dots \tilde{a}(\tau_n).
    \end{equation}
\end{proposition}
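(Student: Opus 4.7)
The plan is to prove the three assertions in sequence: first, Taylor-expand the frozen flow at $p$; second, recognise that the resulting series rewrites as $\exp^{\cdot}(B^{\text{dec}}(\tilde{a}))\triangleright\phi$ after collecting trees into forests, which simultaneously reads off formula \eqref{eq:a_character}; third, deduce the $\shuffle$-character property from that explicit formula by a binomial counting argument. Throughout, I would work order by order in the symbolic parameter (tree/forest size), so that each truncation of $B^{\text{dec}}(\tilde{a})$ is a bona fide smooth vector field thanks to the smoothness of the frame in Assumption~\ref{assumption:regularity}, and the classical Taylor expansion of an ODE flow applies.

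For the first equality, I would fix the formal vector field $G = B^{\text{dec}}(\tilde{a})$ and apply the standard Taylor expansion of the deterministic flow of $\gamma'(t)=G_p(\gamma(t))$ with $\gamma(0)=p$, giving $\phi(\exp(G_p)p) = \sum_{n\geq 0}\frac{1}{n!}G_p^n[\phi](p)$, where $G_p^n[\phi]$ is the $n$-fold application of the first-order operator $G_p$. Unfolding the iteration with $G_p = \sum_d g^d(p)E_d$ shows that $G_p^n[\phi](p)$ equals exactly $(G^{\cdot n})\triangleright_p\phi$ evaluated at $p$: each inner application freezes the coefficients $g^{d_k}$ at $p$ while composing the differentials $E_{d_k}$, which is precisely the frozen concatenation action $\dFdec_p$ from Section~\ref{section:connection_algebra}. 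This yields $\phi(\exp(G_p)p) = \exp^{\cdot}(G)\triangleright_p \phi|_p$.

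For the second equality and formula \eqref{eq:a_character}, I would specialise $G = B^{\text{dec}}(\tilde{a})=\sum_{\tau}\tilde{a}(\tau)\dFdec(\tau)$ and use multilinearity of the concatenation product to obtain
\[ G^{\cdot n}=\sum_{\tau_1,\dots,\tau_n\in T_D}\tilde{a}(\tau_1)\cdots\tilde{a}(\tau_n)\,\dFdec(\tau_1\cdots\tau_n), \]
since $\FF_D=T(\TT_D)$ and $\dFdec$ is multiplicative with respect to concatenation of forests. Summing with the prefactor $1/n!$ and collecting the coefficient of each forest $\pi=\tau_1\cdots\tau_n$ gives $a(\pi)=\tilde{a}(\tau_1)\cdots\tilde{a}(\tau_n)/n!$, which is \eqref{eq:a_character}; comparing with Definition~\ref{def:dec_series} identifies the resulting series with $S^{\text{dec}}(a)\triangleright\phi$.

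Finally, I would verify the $\shuffle$-character property directly from \eqref{eq:a_character}. For two words $\pi_j=\tau_{j,1}\cdots\tau_{j,n_j}$, the inductive definition of $\shuffle$ produces exactly $\binom{n_1+n_2}{n_1}$ length-$(n_1+n_2)$ forests, each a reordering of $(\tau_{j,k})$; by \eqref{eq:a_character} each such reordering has the same value $\frac{1}{(n_1+n_2)!}\prod_{j,k}\tilde{a}(\tau_{j,k})$ under $a$, so
\[ a(\pi_1\shuffle\pi_2)=\binom{n_1+n_2}{n_1}\frac{1}{(n_1+n_2)!}\prod_{j,k}\tilde{a}(\tau_{j,k})=\frac{1}{n_1!\,n_2!}\prod_{j,k}\tilde{a}(\tau_{j,k})=a(\pi_1)a(\pi_2), \]
and the general case follows by bilinearity. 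The main obstacle I anticipate is the careful passage in the first step from the classical Taylor expansion of a genuine ODE flow to the tree/forest-indexed formal series, i.e.\ verifying that iterated application of the first-order operator $G_p$ really does reproduce the frozen concatenation $\dFdec_p$ order by order; once this identification is secured, the remaining arguments are standard combinatorics of exponentials in shuffle algebras.
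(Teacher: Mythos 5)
Your proposal is correct and follows essentially the same route as the paper: the paper's proof delegates the flow expansion to Lemma~2.4 of the commutator-free Lie group methods reference (which is exactly your Taylor expansion of the frozen flow identifying iterated frozen first-order operators with the concatenation exponential), and then derives the character property from \eqref{eq:a_character} via the same $\binom{n+m}{n}$-term counting argument you give. You have merely written out in detail what the paper leaves to the citation.
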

\begin{proof}
  The proof is analogous to the proof of Lemma 2.4 from \cite{Owren06ocf}. The fact that $a \in \FF_D^*$ is a character with respect to the shuffle product follows from the formula \eqref{eq:a_character} using the fact that there are ${n+m \choose n}$ terms in $\pi \shuffle \eta$ with $\pi$ and $\eta$ having $n$ and $m$ trees, respectively.
\end{proof}

\begin{remark}
Representing Taylor expansions of vector fields with formal series indexed by tree-like structures is very natural from the geometric point of view.
In \cite{McLachlan16bsm,MuntheKaas16abs,Laurent23tue}, it is shown that exotic, aromatic and standard B-series are completely characterised by universal geometric properties of equivariance and locality. The extension of these properties to the planar case are open questions, even for standard Lie-Butcher series.
\end{remark}

\subsection{Exotic planar forests}
\label{section:exotic_forests}

This section adds planarity to the exotic forests from \cite{Laurent20eab, Bronasco22ebs, Bronasco22cef}. 
We use decorated planar forests to define exotic planar forests in Definition \ref{def:EF} which are the main object of interest for representing weak stochastic Taylor expansions. Exotic planar forests are decorated planar forests in which the decoration is used to denote the pairing between certain leaves. Let $\SS_\N$ denote the group of permutations of natural numbers.

\begin{definition}
\label{def:EF}
An exotic forest is a decorated forest~$(\pi, \alpha)\in F_D$ with the decorations~$D=\{\bullet\}\cup \N$,~$\N=\{1,2,3,\dots\}$, that follows the following rules.
  If a natural number is used as a decoration, then it must decorate exactly two leaves, that is,~$|\alpha^{-1}(n)|\in \{0, 2\}$ for all $n \in \N$.
  Two exotic forests~$(\pi_1, \alpha_1)$ and~$(\pi_2, \alpha_2)$ are considered to be identical if~$\pi_1 = \pi_2$ and there exists a map~$\varphi : D \to D$ with $\varphi \restrict{\N} \in \SS_\N$ and~$\varphi(\bullet) = \bullet$ such that~$\alpha_1 = \varphi \circ \alpha_2$.
  The set of numbered leaves that correspond to the same number, that is $\alpha^{-1}(n)$ for $n \in \N$, is called a \emph{liana}.
\end{definition}

We gather the numbered nodes in $V_{\circ}$ and the black nodes in $V_{\bullet}$. The order of an exotic forest~$\pi$ is then defined as
\[\abs{\pi}=\abs{V_{\bullet}}+\frac{\abs{V_{\circ}}}{2}.\]
The set of exotic forests is denoted by $EF$.
The set of all exotic forests of order $p$ is $EF^p$, and the ones of order less than or equal to $p$ is $EF^{\leq p}$. Exotic forests with one root are called exotic trees and form a set denoted~$ET$. The corresponding vector spaces are denoted by~$\EF$ and $\ET$, and similarly for the graded subspaces $\EF^{p}$ and $\EF^{\leq p}$.

The exotic trees of order up to three are the following. Note that the order does not coincide with the number of black nodes in general.
\[ \forest{b}, \ \forest{b[b]}, \ \forest{b[1,1]}, \ \forest{b[b,b]}, \ \forest{b[b[b]]}, \ \forest{b[b,1,1]}, \ \forest{b[1,b,1]}, \ \forest{b[1,1,b]}, \ \forest{b[b[1],1]}, \ \forest{b[1,b[1]]}, \ \forest{b[b[1,1]]}, \ \forest{b[1,1,2,2]}, \ \forest{b[1,2,2,1]}, \ \forest{b[1,2,1,2]}.\]
We also note that the choice of the number used to denote the liana does not matter, for example, the following exotic forests are identical:
\[ \forest{b[b[1,b[2,2]],1]} = \forest{b[b[3,b[1,1]],3]}. \]

%\subsection{Algebraic structure of exotic planar forests}

Our main focus with exotic planar forests is to represent some specific elementary differentials in $\TT(\mathfrak{X}(\MM))$ and to translate the connection algebra to a natural algebraic structure on $\EF$.

The vector space $\EF$ of exotic forests forms an algebra~$(\EF, \cdot)$ w.r.t.\ts the non-commutative concatenation product.
We emphasize that it does not coincide with the tensor algebra $T^\cdot(\ET)$ as for instance $\forest{1,1}$ cannot be expressed as the concatenation of exotic trees.

We introduce the new formalism of exotic Lie series for the study of order conditions of stochastic frozen flow methods. The formalism mixes the features of the exotic formalism \cite{Laurent20eab, Laurent21ata, Bronasco22ebs, Bronasco22cef} for the stochastic part and the planar forests formalism \cite{Iserles00lgm, Owren06ocf, MuntheKaas08oth} for Lie-group methods.

\begin{definition}
  \label{def:exotic_elementary_differential}
  Let the map $\dF : \EF \to T(\mathfrak{X}(\MM))$ for $\pi\in EF$ and $\phi\in\CC^\infty(\MM)$ be defined as
\begin{align*}
\F(\pi) \triangleright \phi=\sum_{\underset{v\in V}{i_v=1,\dots,D}}
\delta_{i_{L}} \left(\prod_{v\in V_{\bullet}} E_{I_{\Pi(v)}}[f^{i_v}] \right)
E_{I_{R}}[\phi],
\end{align*}
where~$R$ is the set of roots,~$\Pi(v)$ is the set of predecessors of~$v$ ordered from right to left, $E_I=E_{i_p}\dots E_{i_1}$ for $I={i_p,\dots,i_1}$, and $\delta_{i_{L}}=0$ if there exists $i_v\neq i_w$ with $\alpha(v)=\alpha(w)$ (that is, $(v,w)$ is a liana), and is $1$ otherwise.
The notation can also be frozen for $x$, $p\in \MM$:
\[
(\F_x(\pi) \triangleright \phi)(p)=\sum_{\underset{v\in V}{i_v=1,\dots,D}}
\delta_{i_{L}} \left(\prod_{v\in V_{\bullet}} E_{I_{\Pi(v)}}[f^{i_v}](x) \right)
E_{I_{R}}[\phi](p).
\]
Exotic Lie S-series and exotic Lie-Butcher series are defined analogously to Definition \ref{def:dec_series}: given a one form $a \in \EF^*$ (resp.\ts $a \in \ET^*$), an exotic Lie S-series (resp.\ts exotic Lie-Butcher series) is the following formal series
\[
  S_h(a) \triangleright \phi := \sum_{\pi \in EF} h^{\abs{\pi}} a(\pi) \F(\pi) \triangleright \phi,\quad
  B_h(a) := \sum_{\tau \in ET} h^{\abs{\tau}} a(\tau) \F(\tau),
\]
and analogously for their frozen counterparts $S_{h,p}$ and $B_{h,p}$.
\end{definition}

The exotic series arise naturally by considering the expectation of decorated series. Let $\Phi : \EF \to \FF_D$ be the map that forgets the pairings of the white leaves and sends each liana to a sum of paired decorated nodes $\sum_{l=1}^L \circ^l \circ^l$. For example,
\[ \Phi(\forest{b[1,b[3,3]],1,b[b[2],b[2]]}) = \sum_{i,j,k = 1}^L \forest{b[w_i_90,b[w_k_90,w_k_90]],w_i_90,b[b[w_j_90],b[w_j_90]]}, \quad \Phi(\forest{b[1,b[3,3]],2,b[b[2],b[1]]}) = \sum_{i,j,k=1}^L \forest{b[w_i_90,b[w_k_90,w_k_90]],w_j_90,b[b[w_j_90],b[w_i_90]]}, \quad \Phi(\forest{b[1,b[3,3]],2,b[b[1],b[2]]}) = \sum_{i,j,k=1}^L \forest{b[w_i_90,b[w_k_90,w_k_90]],w_j_90,b[b[w_i_90],b[w_j_90]]}. \]

\begin{proposition}
\label{proposition:Isserlis}
  Let $a^{\text{dec}}$ be a character on $(\FF_D, \shuffle)$ with $D = \{\bullet, \circ^1, \dots, \circ^L\}$, then
\[\E[S^{\text{dec}}(a^{\text{dec}})\triangleright \phi]=S(a)\triangleright \phi,\]
  where $a = a^{\text{dec}} \circ \Phi$ is a character on $\EF$ with respect to the shuffle product
\end{proposition}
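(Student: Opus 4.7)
The strategy is to reduce the expectation on the left-hand side to a sum indexed by exotic forests by applying Isserlis' formula (Wick's theorem for centered Gaussians, valid also for the discrete approximations with matched moments up to the order of the monomial considered). Expanding
\[
\E[S^{\text{dec}}(a^{\text{dec}}) \triangleright \phi] = \sum_{\pi^{\text{dec}} \in F_D} h^{|\pi^{\text{dec}}|} a^{\text{dec}}(\pi^{\text{dec}}) \, \E[\dFdec(\pi^{\text{dec}}) \triangleright \phi],
\]
each white leaf of $\pi^{\text{dec}}$ decorated by $\circ^{l_v}$ contributes a scalar random factor $\xi^{i_v, l_v}$ in $\dFdec(\pi^{\text{dec}})$, while the black nodes and derivative operators are deterministic. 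The $\xi$'s are independent of the spatial quantities, so the expectation reduces to $\E[\prod_{v \in V_\circ} \xi^{i_v, l_v}]$, which by Isserlis equals $\sum_P \prod_{(v,w) \in P} \delta_{i_v i_w} \delta_{l_v l_w}$ with $P$ running over perfect matchings of $V_\circ$ (and vanishes if $|V_\circ|$ is odd, consistent with the vanishing odd-moment assumption).

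Next I would swap the order of summation and group terms by the combinatorial data that actually survives. A pair $(\pi^{\text{dec}}, P)$ with $P$ matching only nodes sharing the same $\circ^l$-label is precisely the data of an exotic forest $\pi \in EF$ (the matched white nodes become the lianas of $\pi$) together with a labeling $\ell$ of its lianas by elements of $\{1,\dots,L\}$. For a fixed $\pi$, summing $a^{\text{dec}}$ over all such labelings yields exactly $a^{\text{dec}}(\Phi(\pi)) = a(\pi)$ by definition of $\Phi$. On the differential side, the Kronecker deltas $\delta_{i_v i_w}$ identify the indices along each liana, so the remaining sum over the $i_v$'s reproduces Definition \ref{def:exotic_elementary_differential} and gives $\F(\pi) \triangleright \phi$. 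The orders match because each liana contributes one unit, so $|\pi^{\text{dec}}| = |\pi|$ on the surviving contributions, and one obtains $\E[S^{\text{dec}}(a^{\text{dec}}) \triangleright \phi] = \sum_{\pi \in EF} h^{|\pi|} a(\pi) \F(\pi) \triangleright \phi = S(a) \triangleright \phi$.

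To conclude that $a = a^{\text{dec}} \circ \Phi$ is a character of $(\EF, \shuffle)$, it suffices to show that $\Phi$ is a morphism of shuffle algebras, so that
\[ a(\pi_1 \shuffle \pi_2) = a^{\text{dec}}(\Phi(\pi_1) \shuffle \Phi(\pi_2)) = a^{\text{dec}}(\Phi(\pi_1))\, a^{\text{dec}}(\Phi(\pi_2)) = a(\pi_1)\, a(\pi_2) \]
by the hypothesis on $a^{\text{dec}}$. The key observation is that the shuffle product only interleaves root trees without altering any internal edge or pairing: lianas of $\pi_1$ remain confined to the $\pi_1$-part of each shuffle summand, and similarly for $\pi_2$, so independent labelings of the two sets of lianas factor through the shuffle operation. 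The main bookkeeping obstacle I anticipate is verifying the reindexing in the second paragraph: one must check that the correspondence between (decorated forest, nonvanishing Wick pairing) and (exotic forest, liana labeling) is a bijection respecting the $\SS_\N$-equivalence from Definition \ref{def:EF} identifying exotic forests up to relabeling of lianas, so that no combinatorial factor is lost or double-counted.
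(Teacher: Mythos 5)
Your proposal is correct and follows essentially the same route as the paper: expand the series, apply Isserlis' formula to pair the white leaves carrying the same noise label, and reindex the surviving (decorated forest, pairing) data as exotic forests so that the sum over liana labelings produces $a^{\text{dec}}(\Phi(\pi))=a(\pi)$. You additionally spell out the character property via $\Phi$ being a shuffle-algebra morphism, which the paper asserts without detail; this is a welcome but minor elaboration rather than a different method.
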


For example, one finds
\[ \E \Big[ \dFdec(\forest{b[w_1_90,b[w_2_90,w_2_90]],w_1_90,b[b[w_1_90],b[w_1_90]]}) \Big] = \dF(\forest{b[1,b[3,3]],1,b[b[2],b[2]]} + \forest{b[1,b[3,3]],2,b[b[2],b[1]]} + \forest{b[1,b[3,3]],2,b[b[1],b[2]]}). \]

\begin{proof}
  The proof is analogous to \cite[Sect.\ts 4.1]{Bronasco22ebs} where the white leaves decorated by the same number are paired using the Isserlis formula \cite{Isserlis18oaf}. That is,
  \[ \E[S^{\text{dec}}(a^{\text{dec}})\triangleright \phi] = \E \Big[ \sum_{\pi \in F_D} h^{|\pi|} a^{\text{dec}}(\pi) \dFdec(\pi) \triangleright \phi \Big] = \sum_{\pi \in F_D} h^{|\pi|} a^{\text{dec}}(\pi) \E [\dFdec(\pi)] \triangleright \phi. \]
  We use Isserlis formula to write
  \[ \E [\dFdec(\pi, \alpha)] = \sum_{\alpha_e} \dF (\pi,\alpha_e), \]
  where $(\pi, \alpha)$ and $(\pi, \alpha_e)$ denote decorated and exotic planar forests as defined in Definitions \ref{def:FD} and \ref{def:EF} with the sum being over all $\alpha_e$ such that $\alpha_e^{-1} (\bullet) = \alpha^{-1} (\bullet)$ and $\alpha_e^{-1}(\N)$ is a union of partitions into pairs of $\alpha^{-1}(\circ^l)$ for $l = 1, \dots,L$. We note that $\E[\dFdec(\pi)] = 0$ if $\pi$ contains an odd number of nodes of color $\circ^l$ for an $l \in \{1, \dots, L \}$. Therefore, we have,
  \[ \E[S^{\text{dec}}(a^{\text{dec}})\triangleright \phi] = \sum_{\pi \in EF} h^{|\pi|} a^{\text{dec}}\big(\Phi(\pi)\big) \dF(\pi) \triangleright \phi = S(a^{\text{dec}} \circ \Phi) \triangleright \phi, \]
  and the statement is proved.
\end{proof}

\begin{ex}
Consider the Euler frozen flow method \eqref{equation:Euler_FF}.
Using Proposition \ref{prop:a_character}, its Taylor expansion in decorated forests is given by
\begin{align*}
\phi(&\exp(hF_p+\sqrt{2h}\xi^d E_d)p)
=\exp^\cdot(hF+\sqrt{2h}\xi^d E_d)\triangleright\phi(p)
=\dFdec(\exp^\cdot(h\forest{b}+\sqrt{2h}\forest{w}))\triangleright\phi(p)\\
&=\Big(\id
+h^{1/2}\sqrt{2}\xi^d E_d
+h(F+\xi^{d_2}\xi^{d_1} E_{d_2}\cdot E_{d_1})\\
&+h^{3/2}(\frac{1}{\sqrt{2}}\xi^d F\cdot E_d+\frac{1}{\sqrt{2}}\xi^d E_d\cdot F+\frac{\sqrt{2}}{3}\xi^{d_3}\xi^{d_2}\xi^{d_1} E_{d_3}\cdot E_{d_2}\cdot E_{d_1})
%+h^2 ()
+\dots\Big)\triangleright\phi\\
&=\dFdec\Big(\textbf{1}
+h^{1/2}\sqrt{2}\forest{w}
+h(\forest{b}+\forest{w,w})
+h^{3/2}(\frac{1}{\sqrt{2}}\forest{b,w}+\frac{1}{\sqrt{2}}\forest{w,b}+\frac{\sqrt{2}}{3}\forest{w,w,w})\\
&+h^2 (\frac{1}{2}\forest{b,b}+\frac{1}{3}\forest{b,w,w}+\frac{1}{3}\forest{w,b,w}+\frac{1}{3}\forest{w,w,b}+\frac{1}{6}\forest{w,w,w,w})
+\dots
\Big)\triangleright\phi(p).
\end{align*}
Then, the expectation pairs the decorated nodes together as detailed in Proposition \ref{proposition:Isserlis} and yields
\begin{align*}
\E[&\phi(\exp(hF_p+\sqrt{2h}\xi^d E_d)p)]
=\F\Big(\textbf{1}
+h(\forest{b}+\forest{1,1})\\
&+h^2 (\frac{1}{2}\forest{b,b}+\frac{1}{3}\forest{b,1,1}+\frac{1}{3}\forest{1,b,1}+\frac{1}{3}\forest{1,1,b}+\frac{1}{6}\forest{2,2,1,1}+\frac{1}{6}\forest{2,1,2,1}+\frac{1}{6}\forest{1,2,2,1})
+\dots
\Big)\triangleright\phi(p)\\
&=\Big(\id
+h(F+E_{d}\cdot E_{d})
+h^2 (\frac{1}{2}F\cdot F+\frac{1}{3}F\cdot E_d\cdot E_d+\frac{1}{3}E_d\cdot F\cdot E_d+\frac{1}{3}E_d\cdot E_d\cdot F\\&+\frac{1}{6}E_{d_2}\cdot E_{d_2}\cdot E_{d_1}\cdot E_{d_1}+\frac{1}{6}E_{d_2}\cdot E_{d_1}\cdot E_{d_2}\cdot E_{d_1}+\frac{1}{6}E_{d_1}\cdot E_{d_2}\cdot E_{d_2}\cdot E_{d_1})
+\dots\Big)\triangleright\phi(p).
\end{align*}
\end{ex}

\subsection{Conversion of geometric operations into exotic Lie series}

We saw in Proposition \ref{prop:a_character} that the frozen flow rewrites as a tree series operation.
%We show in this section that the frozen vector field, and the frozen and standard compositions are also expressed naturally using tree series.
The following result, proved anologously to \cite[Lemma 2.3]{Owren06ocf}, provides an algebraic description of the Taylor expansion of a frozen vector field.
\begin{lemma}
Let a smooth map $\varphi\colon\MM\rightarrow \MM$ with expansion $\phi(\varphi(p))=(S(a)\triangleright \phi)(p)$ with $a\in \EF^*$,  then the frozen vector field $F_{\varphi}\colon p\rightarrow F_{\varphi(p)}(p)$ satisfies
\[
F_{\varphi}\triangleright \phi=(S(a)\triangleright hF)\triangleright \phi=B(\tilde{a})\triangleright \phi,\quad \tilde{a}(\tau)=a(B^-(\tau)),\quad \tilde{a}\in \ET^*.
\]
\end{lemma}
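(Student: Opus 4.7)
The plan is to unwind the definition of the frozen vector field, apply the expansion hypothesis of $\varphi$ to the coefficients $f^d$, and recognize the resulting series as $B(\tilde{a})$ through the recursive action of $B^+$ on exotic planar trees.

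First, I would write out the action of the frozen vector field directly. By Definition of $F_\varphi$ and of $\triangleright$ on functions,
\[(F_\varphi \triangleright \phi)(p) = F_{\varphi(p)}[\phi](p) = \sum_{d=1}^D f^d(\varphi(p))\, E_d[\phi](p).\]
Applying the hypothesis $\phi \circ \varphi = S(a) \triangleright \phi$ to each test function $f^d \in \CC^\infty(\MM)$ gives $f^d(\varphi(p)) = (S(a) \triangleright f^d)(p)$, so that
\[(F_\varphi \triangleright \phi)(p) = \sum_{d=1}^D \big(S(a) \triangleright f^d\big)(p)\, E_d[\phi](p).\]
This is precisely the middle expression of the lemma, once $S(a) \triangleright hF$ is read as the formal vector field $\sum_d (S(a) \triangleright hf^d)E_d$ and the stepsize $h$ is interpreted as bookkeeping for the natural grading of the series.

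Next, I would translate this sum into the language of exotic planar trees through $B^+$. The key combinatorial identity, read directly off Definition \ref{def:exotic_elementary_differential}, is that adjoining a new $\bullet$-decorated root above the roots of a forest $\pi$ produces the exotic tree $\tau = B^+(\pi)$ whose elementary differential factors through $\F(\pi)$:
\[\F(\tau) \triangleright \phi = \sum_{d=1}^D \big(\F(\pi) \triangleright f^d\big)\, E_d[\phi].\]
Indeed, the new root $v_0$ contributes the factor $E_{i_{v_0}}[\phi] = E_d[\phi]$ with $I_R = (d)$, and its predecessors $\Pi(v_0)$ are exactly the roots of $\pi$, so the remaining product in Definition \ref{def:exotic_elementary_differential} reduces to $\F(\pi) \triangleright f^d$. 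Since $B^+$ only adjoins a root and touches no leaf, the liana constraint $\delta_{i_L}$ is unchanged, and $B^+$ is a bijection between exotic forests and exotic trees with a $\bullet$-decorated root, satisfying $|B^+(\pi)| = |\pi|+1$.

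Substituting and reindexing $\tau = B^+(\pi)$, $\pi = B^-(\tau)$, then gives
\[h \sum_{\pi \in EF} h^{|\pi|} a(\pi)\, \F\big(B^+(\pi)\big) \triangleright \phi = \sum_{\tau \in ET} h^{|\tau|}\, a(B^-(\tau))\, \F(\tau) \triangleright \phi = \sum_{\tau \in ET} h^{|\tau|}\, \tilde{a}(\tau)\, \F(\tau) \triangleright \phi = B(\tilde{a}) \triangleright \phi,\]
identifying the right-hand side of the lemma and confirming $\tilde{a} \in \ET^*$. The main obstacle is purely bookkeeping: one must confirm that the liana pairings transport correctly under $B^+$ (they do, because only a root is added) and that the $h$-grading is consistent throughout the reindexing. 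Once these checks are made, the statement is a direct consequence of Definition \ref{def:exotic_elementary_differential} and the hypothesis on $\varphi$.
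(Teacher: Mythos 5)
Your proof is correct and follows the route the paper intends: the paper gives no details, merely citing the analogy with Lemma 2.3 of Owren--Marthinsen, and your argument is exactly the natural unwinding of that analogy — apply the expansion of $\varphi$ to the coefficient functions $f^d$, then absorb the extra factor $E_d[\phi]$ into a new $\bullet$-root via $B^+$, checking that lianas and the $h$-grading are preserved. The key identity $\F(B^+(\pi))\triangleright\phi=\sum_d(\F(\pi)\triangleright f^d)E_d[\phi]$ and the observation that $B^+$ bijects exotic forests onto exotic trees (every exotic tree has a black root, since numbered nodes must be paired leaves) are precisely the details the paper leaves implicit.
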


Let two smooth maps $\varphi^1$, $\varphi^2\colon \MM \times \MM\rightarrow \MM$, typically $\varphi^k_x(p)=\exp(F_{\psi^k(x)})\cdot p$ with $\psi^k\colon \MM\rightarrow \MM$.
We use two possible ways to compose the $\varphi^k$.
The standard composition of the maps would read
\[
\varphi^1\circ\varphi^2(p)=\varphi^1_{\varphi^2_p (p)}(\varphi^2_p (p)).
\]
On the other hand, the frozen flow methods \eqref{equation:def_CGsto} rely on the frozen composition:
\[
\varphi^1\cdot\varphi^2(p)=\varphi^1_p\cdot\varphi^2_p(p)=\varphi^1_p (\varphi^2_p (p)).
\]
%We shall see that the frozen composition corresponds to the action of geodesics used , while the composition corresponds to the composition of differential operators.
The decorated and exotic Lie series allow to represent naturally all the geometric operations needed for the analysis of the frozen-flow methods \eqref{equation:def_CGsto}. We present the following result in the context of exotic series (with $h=1$ for simplicity), and we mention that decorated series satisfy analogous identities. Let the deconcatenation product $\Delta : \EF \to \EF \otimes \EF$ be defined as
\[ \Delta_\cdot(\pi_{1, \dots, n}) = \sum_{k=0}^n \pi_{1,\dots,k} \otimes \pi_{k+1,\dots,n}, \quad \text{with } \pi_i \in \Irr, \]
where $\pi_{1,\dots,n} = \pi_1 \cdots \pi_n$, $\pi_\emptyset = \mathbf{1}$, and $\Irr$ is the space of irreducible exotic forests introduced in Section \ref{sec:exotic_forests}.
The composition and the Munthe-Kaas-Wright coproduct are further discussed in Section \ref{section:decorated_exotic_forests}.
\begin{theorem}
\label{proposition:Relations_B_S_series}
Let two smooth maps $\varphi^1$, $\varphi^2\colon \MM\times \MM\rightarrow \MM$ with expansions $\phi(\varphi^k_x(p))=(S_{x}(a^k)\triangleright \phi)(p)$ with $a\in \EF^*$.
The frozen composition is given by
\[
\phi(\varphi^1\cdot\varphi^2)=(S(a^2)\cdot S(a^1))\triangleright \phi
=S(a^2\cdot a^1)\triangleright \phi,\quad a^2\cdot a^1=\mu\circ (a^2\otimes a^1)\circ \Delta_\cdot.
\]

Let two smooth maps $\varphi^1$, $\varphi^2\colon\MM\rightarrow \MM$ with expansions $\phi(\varphi^k(p))=(S(a^k)\triangleright \phi)(p)$ with $a^k\in \EF^*$.
The composition of exotic series as differential operators is given by the composition law, dual of the MKW coproduct,
\[
\phi(\varphi^1\circ \varphi^2)=S(a^2)\triangleright(S(a^1)\triangleright\phi) = S(a^2 * a^1)\triangleright\phi,\quad a^2 * a^1=\mu\circ(a^2\otimes a^1)\circ \Delta_{MKW}.
\]
\end{theorem}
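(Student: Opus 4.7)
The plan is to handle the two composition formulas in parallel, translating each geometric composition into an algebraic product on coefficient maps. For the \emph{frozen composition} $\varphi^1\cdot\varphi^2$, I would first unpack the definition $\varphi^1\cdot\varphi^2(p)=\varphi^1_p(\varphi^2_p(p))$ and apply the hypothesis twice. Setting $q=\varphi^2_p(p)$ and using $\phi(\varphi^1_p(q))=(S_p(a^1)\triangleright\phi)(q)$, the frozen coefficients in $S_p(a^1)$ are constants, so $\psi:=S_p(a^1)\triangleright\phi$ is a (formal) test function in $\CC^\infty_P(\MM)$. A second application of the hypothesis to $\varphi^2_p$ with $\psi$ then gives $\phi(\varphi^1\cdot\varphi^2(p))=(S_p(a^2)\triangleright\psi)(p)=(S_p(a^2)\cdot S_p(a^1))\triangleright\phi(p)$. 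The second equality reduces to the algebraic statement that $\F$ is a morphism from $(\EF,\cdot)$ to $(T(\mathfrak{X}(\MM)),\cdot)$, that is, $\F(\pi_2\cdot\pi_1)=\F(\pi_2)\cdot\F(\pi_1)$; this follows from Definition~\ref{def:exotic_elementary_differential} by tracking the right-to-left ordering of roots, which makes the rightmost subtree act first on~$\phi$, matching the frozen product convention on $T(\mathfrak{X}(\MM))$. Regrouping $\sum_{\pi_1,\pi_2}a^2(\pi_2)a^1(\pi_1)\F(\pi_2\cdot\pi_1)$ by the concatenated forest $\pi=\pi_2\cdot\pi_1$ then produces $(a^2\cdot a^1)(\pi)=\mu\circ(a^2\otimes a^1)\circ\Delta_\cdot(\pi)$ from the deconcatenation coproduct defined above the statement.

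For the \emph{ordinary composition} $\varphi^1\circ\varphi^2$, the first equality proceeds analogously but without freezing. Setting $q=\varphi^2(p)$, the hypothesis gives $\phi(\varphi^1(q))=(S(a^1)\triangleright\phi)(q)$; letting $\psi:=S(a^1)\triangleright\phi$, which is now a genuinely position-dependent test function in $\CC^\infty_P(\MM)$, a second application of the hypothesis yields $\phi(\varphi^1\circ\varphi^2(p))=(S(a^2)\triangleright\psi)(p)=(S(a^2)\triangleright(S(a^1)\triangleright\phi))(p)$.

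The hard part will be the second equality, $S(a^2)\triangleright(S(a^1)\triangleright\phi)=S(a^2*a^1)\triangleright\phi$. Unlike the frozen product~$\cdot$, the connection~$\triangleright$ obeys a Leibniz rule: when the vector fields of $S(a^2)$ act on the coefficients of $S(a^1)$, they must differentiate those coefficients rather than freeze them. Iteratively unwinding this Leibniz splitting produces a combinatorial sum indexed by pairs of exotic planar forests, where each subtree of the ``upper'' forest is grafted onto a node of the ``lower'' forest. The key step is to identify this sum with the dual of the Munthe-Kaas-Wright coproduct~$\Delta_{MKW}$ extended to the exotic setting. This identification requires careful handling of (i)~planarity, so that grafting respects the right-to-left ordering of predecessors; (ii)~the lianas, whose pairings must be preserved by the splitting so that $\Delta_{MKW}$ descends from $\FF_D$ along the map~$\Phi$ of Proposition~\ref{proposition:Isserlis}; and (iii)~the compatibility between~$\triangleright$ and~$\cdot$, namely how a single elementary differential acts through the connection on a frozen product of elementary differentials. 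The construction of $\Delta_{MKW}$ on $\EF$ and its algebraic properties are the content of Section~\ref{section:decorated_exotic_forests}, where the Munthe-Kaas-Wright Hopf algebra is generalised to exotic planar forests; once available, the identity follows by expanding the left-hand side forest-by-forest and matching coefficients with the convolution $(a^2*a^1)(\pi)=\mu\circ(a^2\otimes a^1)\circ\Delta_{MKW}(\pi)$.
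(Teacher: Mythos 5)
Your proposal is correct and follows essentially the same route as the paper: the frozen composition is handled by the two-step application of the hypothesis with frozen coefficients and regrouping along the deconcatenation coproduct (the argument behind the cited Lemma 2.2 of Owren--Marthinsen), and the ordinary composition is reduced, exactly as in the paper, to the duality between the Grossman--Larson product and the Munthe-Kaas--Wright coproduct established in Theorem~\ref{theorem_MKW} together with the morphism property of $\F$. The paper's proof consists only of these two citations, so your write-up simply fills in the details they stand for.
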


\begin{proof}
The expression for the frozen composition is derived similarly to \cite[Lemma 2.2]{Owren06ocf}.
%The first one is a rewriting of a Taylor expansion of $\phi(F_{B_h(a)})$. The second and third properties derive from Proposition \ref{prop:F_morphism}.
The composition of flows derives from Theorem \ref{theorem_MKW}.
\end{proof}

\subsection{Exotic Lie series of the exact and numerical flows}
\label{section:Butcher_expansion_flows}

The exotic Lie series formalism allows us to give explicit expressions of the Taylor expansions of both exact and numerical flows.
The generator of equation \eqref{equation:SDE_Strato} rewrites as
\begin{align*}
\LL[\phi]
=\sum_{d=1}^D f^{d}E_d[\phi]
+\sum_{d=1}^D E_{d}[E_{d}[\phi]]
=\F(\forest{b}+\forest{1,1})[\phi]
=S(l)[\phi], \quad l=\delta_{\forest{b}}+\delta_{\forest{1,1}}.
\end{align*}
The formal expansion \eqref{equation:dvp_exact} then rewrites straightforwardly in terms of forests.
The proof is a generalisation of the works \cite{Owren99rkm, Rossler04ste, Bronasco22ebs}.
\begin{theorem}
\label{theorem:expansion_exact_flow}
The Taylor expansion of the exact flow of \eqref{equation:SDE_Strato} is given by the exotic Lie series
%\begin{equation}
%\label{equation:exact_expansion}
\[u(h,p)=\exp(h\LL)[\phi](p)=S_h(e)\triangleright \phi(p), \quad e=\exp^*(l)=\sum_{n=0}^\infty \frac{l^{*n}}{n!}.\]
%\end{equation}
In addition, let $\alpha(\pi)$ be the number of different ways the exotic planar forest $\pi$ can be obtained by successive applications of the Grossman-Larson product of a black node,
or of a pair of decorated nodes.
Then, the coefficient $e$ is given by
\[
e(\pi)=\frac{\alpha(\pi)}{\abs{\pi}!}.
\]
\end{theorem}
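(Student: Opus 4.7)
The plan is to combine the Kolmogorov-based Taylor expansion \eqref{equation:dvp_exact} with the composition rule for exotic Lie S-series from Theorem \ref{proposition:Relations_B_S_series}, and then to interpret the resulting MKW convolution combinatorially.

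First I would rewrite the generator as an exotic Lie S-series concentrated in order one, namely $\LL \phi = S(l) \triangleright \phi$ with $l = \delta_{\forest{b}} + \delta_{\forest{1,1}}$, which is just the identity $\LL = \F(\forest{b}+\forest{1,1})$ recalled above the statement. Iterating the composition law of Theorem \ref{proposition:Relations_B_S_series} then yields $\LL^n \phi = S(l^{*n}) \triangleright \phi$ for every $n \geq 0$, where $*$ denotes the convolution on $\EF^*$ dual to the MKW coproduct. Since this coproduct is compatible with the grading on $\EF$ (to be established in Section \ref{section:decorated_exotic_forests}), the convolution $*$ is graded as well, and $l^{*n}$ is supported on exotic forests of order exactly $n$. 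Hence for every $\pi \in EF$,
\[ \exp^{*}(l)(\pi) = \sum_{k \geq 0} \frac{l^{*k}(\pi)}{k!} = \frac{l^{*|\pi|}(\pi)}{|\pi|!}. \]
Substituting $\LL^n \phi = S(l^{*n})\triangleright \phi$ into \eqref{equation:dvp_exact} and matching the coefficients of $h^{|\pi|}\F(\pi) \triangleright \phi$ yields $u(h,p) = S_h(\exp^{*}(l)) \triangleright \phi(p)$, with the remainder controlled by the one already provided in \eqref{equation:dvp_exact}.

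For the explicit formula $e(\pi) = \alpha(\pi)/|\pi|!$, I would interpret the elementary convolution $a \mapsto l * a$ as the sum of the two Grossman-Larson-type graftings that enter the definition of $\alpha$: grafting a single black node $\forest{b}$ and grafting a liana pair $\forest{1,1}$. This interpretation will come directly from the description of $\Delta_{MKW}$ on exotic planar forests in Section \ref{section:decorated_exotic_forests}: the only left factors of $\Delta_{MKW}\pi$ that pair nontrivially with $l$ are of the form $\forest{b}$ or $\forest{1,1}$, and the corresponding right factors enumerate the forests obtained by reversing one Grossman-Larson insertion of these two cogenerators. Iterated $n$ times, $l^{*n}(\pi)$ therefore counts the number of sequences of $n$ such graftings that produce $\pi$ starting from the empty forest, which is precisely $\alpha(\pi)$. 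Specialising to $n = |\pi|$ closes the proof.

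The main obstacle is this last combinatorial identification: one must verify, with the precise Hopf-algebraic data of Section \ref{section:decorated_exotic_forests}, that the MKW convolution with the two elementary cogenerators $\delta_{\forest{b}}$ and $\delta_{\forest{1,1}}$ really implements the Grossman-Larson product on exotic planar forests. In particular, one needs the MKW coproduct to restrict on liana-free forests to the classical Munthe-Kaas-Wright coproduct of \cite{MuntheKaas08oth}, and to treat each liana as a single indecomposable unit of order one that can only be cut off as a whole. Once this structural fact is in hand, the identity $\alpha(\pi) = l^{*|\pi|}(\pi)$, and hence the closed form for $e(\pi)$, follows by a direct induction on $|\pi|$.
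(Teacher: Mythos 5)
Your proposal is correct and follows essentially the same route as the paper, which only sketches the argument: write $\LL=S(l)$ with $l=\delta_{\forest{b}}+\delta_{\forest{1,1}}$, use the composition law dual to the Munthe-Kaas--Wright coproduct to get $\LL^j\phi=S(l^{*j})\triangleright\phi$ (the paper illustrates exactly this with $\frac{t^2}{2}\LL^2=\frac{t^2}{2}S_t(l*l)$), insert into the Kolmogorov expansion \eqref{equation:dvp_exact}, and read off $e(\pi)=l^{*\abs{\pi}}(\pi)/\abs{\pi}!=\alpha(\pi)/\abs{\pi}!$ from the gradedness of the convolution and the duality between $\Delta_{MKW}$ and the Grossman--Larson product established in Theorem \ref{theorem_MKW}. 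The structural fact you flag as the remaining obstacle is precisely what Section \ref{section:decorated_exotic_forests} supplies, so your outline matches the paper's intended proof.
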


The first terms are the following
\begin{align*}
\exp(t\LL)
&=\F\Big(\ind 
+t(\forest{b}+\forest{1,1})
+\frac{t^2}{2}\Big(
\forest{b[b]}
+\forest{b[1,1]}
+\forest{b,b}
+2\forest{1,b[1]}
+\forest{b,1,1}
+\forest{1,1,b}
+\forest{2,2,1,1}
\Big)
+\dots\Big),
\end{align*}
where the second order term is $\frac{t^2}{2}\LL^2=\frac{t^2}{2}S_t(l)*S_t(l)=\frac{t^2}{2}S_t(l*l)$.

%\subsection{Frozen flow integrators and their exotic Lie series}

Let us now present the expansion in exotic series of the frozen flow integrators. For the sake of clarity, the random Runge-Kutta coefficients $z^d_k$, $z^d_{i,k}$ are chosen as centered Gaussians. They can be replaced with bounded approximations of Gaussians with the same first moments up to order $2p$ and the expansion of the integrator will be an exotic series up to order $p$.

%\textcolor{red}{I can t have the dependence in d in the z and Z as I need to use the Gaussians already to get the Isserlis. Try to calculate the bilaplacian term and see how it works. Maybe I will have to actually use a sum of $z_k^q \xi_d^q$. Change things back to $\xi$ btw.}

\begin{definition}
For $K>0$, a labelling in $\Lab_K(\pi)$ of a decorated forest $\pi$ is a multi-index map $\textbf{k}\colon V\rightarrow \{1,\dots,K\}$ such that $k_v\geq k_w$ if $v$ and $w$ are in the same tree, with same height and $v$ is on the right of $w$.
\end{definition}

For instance, a possible labelling in $\Lab_8(\forest{b,b[1,b[1]})$ is $\forest{b_2_180,b_8[1_3_180,b_7[1_2]]}$.
Then, the Talay-Tubaro expansion of the new methods \eqref{equation:def_CGsto} is also given by an exotic Lie series.
\begin{theorem}
\label{theorem:expansion_numerical_flow}
The Talay-Tubaro expansion of a frozen flow method of the form \eqref{equation:def_CGsto} is an exotic Lie series:
%\begin{equation}
%\label{equation:numerical_expansion}
\[\E[\phi(X_1)|X_0=p]=S_h(a)\triangleright \phi(p),\]
%\end{equation}
where $a$ is given by
\[
a(\pi)=\E\Big[\sum_{\underset{v\in V}{i_v=1,\dots,s}}
\sum_{\textbf{k}\in \Lab_K(\pi)}^! \sum_{\underset{v\in V_{\circ}}{d_v=1,\dots,D}}
%\delta_{I_{L}}
\delta_{d_{L}}
\prod_{\underset{v\in V_{\bullet}}{(v,w)\in E}} Z^{0}_{i_w,i_v,k_v}
\prod_{\underset{v\in V_{\circ}}{(v,w)\in E}} Z^{d_v}_{i_w,k_v}
\prod_{r\in R\cap V_{\bullet}} z^{0}_{i_r,k_r}
\prod_{r\in R\cap V_{\circ}} z^{d_r}_{k_r}  
\Big],
\]
where $\delta_{d_{L}}=0$ if there exists $d_v\neq d_w$ with $(v,w)\in L$, and is 1 else.
%where $p_{d(v)}=0$ if $v\in V_{\bullet}$ and $p_{d(v)}=p_v$ else.
Moreover, the coefficient map $a$ is a character on $(\EF,\shuffle)$, that is, $a(\pi_1\shuffle \pi_2)=a(\pi_1)a(\pi_2)$.
\end{theorem}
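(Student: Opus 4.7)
The plan is to build the Taylor expansion of one step of \eqref{equation:def_CGsto} inductively, combining three tools already established: expansion of a single frozen exponential as a decorated Lie-Butcher series (Proposition \ref{prop:a_character}), the frozen-composition law (Theorem \ref{proposition:Relations_B_S_series}), and the Isserlis pairing that converts decorated to exotic forests (Proposition \ref{proposition:Isserlis}).

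I would first expand each individual exponential factor in \eqref{equation:def_CGsto} separately. Its generator is $V_{i,k}=\sum_d\big(h\sum_j Z^{0}_{i,j,k} f^d(H^j_n)+\sqrt h Z^{d}_{i,k}\big)E_d$, a vector field with scalar coefficients once $H^j_n$ is treated as fixed, so Proposition \ref{prop:a_character} gives its flow as a decorated Lie S-series whose coefficient one-form is a shuffle character on $(\FF_D,\shuffle)$, supported at order one on a single black node (with weight $hZ^{0}_{i,j,k}f^d(H^j_n)$) and on a single coloured leaf (with weight $\sqrt h Z^{d}_{i,k}$). Next, I would compose the $K$ factors of each stage at the common base point: since the generators have scalar coefficients, this is a frozen composition in the sense of Theorem \ref{proposition:Relations_B_S_series}, so the stage one-form equals the concatenation-convolution product $a_{i,K}\cdot\cdots\cdot a_{i,1}$ read right-to-left; shuffle characters form a group under $\cdot$, so the result is again a shuffle character. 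Combinatorially, the iterated $\Delta_\cdot$ splittings into $K$ ordered blocks amount to labellings $k_v\in\{1,\dots,K\}$, planarity forces $k_v\geq k_w$ whenever $v$ sits to the right of $w$ at the same height, and the factorials in $\sum^{!}$ compensate the symmetrisation built into formula \eqref{eq:a_character}.

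Then I would remove the dependence on $f^d(H^j_n)$ by substituting the already-obtained S-series for $H^j_n$: this inserts at each black node a subforest carrying the stage index $i_v=j$, producing a single decorated Lie S-series for $X_{n+1}$ whose coefficients are exactly the products of $Z^{0}$, $Z^{d}$, $z^{0}$ and $z^{d}$ appearing in the announced formula, with the $z$-variables located at the roots since they come from the final outermost stage. Finally, taking expectation, Proposition \ref{proposition:Isserlis} replaces the decorated character by $a=a^{\text{dec}}\circ\Phi$: the Isserlis pairing fuses matching coloured leaves into lianas, enforces the frame-index agreement $\delta_{d_L}$ at each liana, and yields an exotic Lie S-series of the claimed form. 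The character property of $a$ on $(\EF,\shuffle)$ is inherited from that of $a^{\text{dec}}$ on $(\FF_D,\shuffle)$ through $\Phi$, which intertwines the two shuffle products by construction.

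The main obstacle is verifying the combinatorial identity underlying the second step: one must show that the iterated $K$-fold deconcatenation distributes each exotic forest over the labellings of $\Lab_K(\pi)$ with multiplicity exactly $\textbf{k}!$, exactly cancelling the $\sum^{!}$ normalisation. This is the planar-tree analogue of the multinomial expansion of $(\sum_k V_k)^n$, and I would prove it by induction on the height of $\pi$, using the right-to-left reading of the exponentials in \eqref{equation:def_CGsto} to enforce the planarity inequality $k_v\geq k_w$ on labels at equal height.
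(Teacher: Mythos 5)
Your proposal follows essentially the same route as the paper's proof: expand each frozen exponential as a decorated Lie series character (Proposition \ref{prop:a_character}), compose the $K$ factors per stage via the concatenation convolution of Theorem \ref{proposition:Relations_B_S_series}, substitute the stage expansions at the black nodes, and pass to exotic forests by the Isserlis pairing of Proposition \ref{proposition:Isserlis}, which also gives the character property. The combinatorial identity you flag as the main obstacle is exactly what the paper settles by its explicit recursion for the intermediate coefficient maps $\alpha^i_k$ and induction on the exponential index $k$ (rather than on the height of $\pi$), so your plan is sound and matches the paper's argument.
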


The proof is a stochastic generalisation of \cite{Owren06ocf}.
\begin{proof}
Let us consider one step of the algorithm, with one Gaussian for the sake of simplicity: $Z^{d}_{i,k}=\hat{Z}_{i,k}\xi^d$, $z^{d}_{k}=\hat{z}_{k}\xi^d$.
Let the intermediate steps
\[
g^i_{k}=\exp\bigg(h\sum_{j=1}^s Z^{0}_{i,j,k} f(H^j)+ \sqrt{h} \hat{Z}_{i,k} \sum_d \xi_d E_{d} \bigg)\circ g^i_{k-1},\quad
g^i_0=\id.
\]
Theorem \ref{proposition:Relations_B_S_series} yields $\phi(g^i_{k})=S_h^{\text{dec}}(\alpha_k^i)\triangleright \phi$ and $\phi(H^i)=S_h^{\text{dec}}(\alpha_K^i)\triangleright \phi$ with $\alpha_0^i=\delta_{\textbf{1}}$, $\alpha_k^i(1)=1$, and
\begin{align*}
\alpha_k^i(\tau_1\dots \tau_n)&=\sum_{p=0}^n \frac{1}{(n-p)!} \alpha_{k-1}^i(\tau_1\dots \tau_p) \Big(\sum_{j_{p+1}=1}^s Z^{0}_{i,j_{p+1},k}  \alpha_K^{j_{p+1}}(B^-\tau_{p+1})\ind_{\tau_{p+1}\neq \forest{w}}+ \hat{Z}_{i,k}\ind_{\tau_{p+1}=\forest{w}}\Big)\\&\dots \Big(\sum_{j_n=1}^s Z^{0}_{i,j_n,k}  \alpha_K^{j_n}(B^-\tau_n)\ind_{\tau_n\neq\forest{w}}+ \hat{Z}_{i,k}\ind_{\tau_n=\forest{w}}\Big).
\end{align*}
An induction then yields
\begin{align*}
\alpha_K^i(\tau_1\dots \tau_n)
&=\sum_{k_1\leq\dots \leq k_n} \frac{1}{\textbf{k}!} \Big(\sum_{j_1=1}^s Z^{0}_{i,j_1,k_1}  \alpha_K^{j_1}(B^-\tau_1)\ind_{\tau_1\neq \forest{w}}+ \hat{Z}_{i,k_1}\ind_{\tau_1=\forest{w}}\Big)\\&
\dots
\Big(\sum_{j_n=1}^s Z^{0}_{i,j_n,k_n}  \alpha_K^{j_n}(B^-\tau_n)\ind_{\tau_n\neq \forest{w}}+ \hat{Z}_{i,k_n}\ind_{\tau_n=\forest{w}}\Big).
\end{align*}
Similar reasoning yields the desired expansion for the last stage of the method as $S_h^{\text{dec}}(a^{\text{dec}})$, where $a^{\text{dec}}$ is a character by construction.
Proposition \ref{proposition:Isserlis} yields the expected formula for the coefficient map $a$, as well as the character property.
%Define $\Phi$ as the map that forgets the planarity and order in forests, then it seems that
%\[
%\sum_{\pi\in\Phi^{-1}(\pi_0)} a_{planar}(\pi)=a_{non\ planar}(\pi_0).
%\]
%
%TO BE COMPLETED.
\end{proof}

Examples of computations of the coefficient map $a$ associated to frozen flow methods are presented in Table \ref{table:weak_order_conditions} and Appendix \ref{app:order_cond_3}.
A more involved example of order 7 is the following:
\begin{align*}
\pi&=\forest{b_3_270[b_8_180[2_9_90],1_7_90,1_6_90],2_2_270,b_1_270[b_5_90,b_4_90]}\\
a(\pi)&=\E\sum_{i_1,\dots, i_9=1}^s \sum_{\underset{k_4\geq k_5,k_6\geq k_7\geq k_8}{k_1\geq k_2\geq k_3}}^! \sum_{d_1, d_2=1}^D
Z_{i_8,i_9,k_9}^{d_2}
Z_{i_3,i_8,k_8}^0 Z_{i_3,i_7,k_7}^{d_1} Z_{i_3,i_6,k_6}^{d_1} Z_{i_1,i_5,k_5}^0 Z_{i_1,i_4,k_4}^0
z_{i_3,k_3}^0 z_{k_2}^{d_2} z_{i_1,k_1}^0
%\\
%&=\E\sum_{k_1\geq k_2\geq k_3}^! \sum_{d_1, d_2=1}^D a_{k_3}^{d_1,d_2}(\tau_3) a_{k_2}^{d_2}(\tau_2) a_{k_1}(\tau_1)
\end{align*}

%\moda{
%Eugen:
%The frozen exponential of the Euler method is in this context given by the (non-commutative) concatenation exponential
%\begin{align*}
%\exp(hf^k(p) E_k)[\phi](x)
%&=\sum_{n=0}^\infty h^n \sum_{k_1\geq \dots\geq k_n}^! f^{k_n}(p)\dots f^{k_1}(p) E_{k_n}\dots E_{k_1}[\phi](x)\\
%&=\sum_{n=0}^\infty \frac{h^n}{n!} \sum_{k_1,\dots, k_n} f^{k_n}(p)\dots f^{k_1}(p) E_{k_n}\dots E_{k_1}[\phi](x)\\
%&=\exp^\cdot(\forest{b}).
%\end{align*}
%The frozen flow methods are given by similar exponentials $\exp(hF^k(p) E_k)[\phi](x)$ with $F$ random. The expansion is similar, but we just substitute the black nodes by a grafted B-series, so that the coefficients indeed commute and the frozen flow can be written as a concatenation exponential. Then we take the expectation and link the crosses by pairs as everything is Gaussian by Isserlis theorem.
%}

\begin{remark}
  \label{rmk:shuffle_relations}

  Order conditions for order $p$ are obtained by requiring $(a - e)(\pi) = 0$ for all $\pi \in EF \setminus \{\mathbf{1}\}$ of order $|\pi| \leq p$. This results in the set of order conditions being indexed by the elements of $EF \setminus \{ \mathbf{1} \}$.
  The fact that the coefficient maps $a$ and $e$ are characters of $(\EF, \shuffle)$ induces relations between order conditions as detailed in Remark \ref{remark:shuffle_identities}. In particular, $(a - e) (\pi \shuffle \eta) = 0$ if $(a - e)(\pi) = 0$ and $(a - e)(\eta) = 0$ using,
  \[ (a - e)(\pi \shuffle \eta) = a(\pi)a(\eta) - e(\pi)e(\eta) = 0. \]
  To reduce the number of superfluous order conditions, we consider the quotient of $\EF$ by the ideal
  $\II := \langle \pi \shuffle \eta \; : \; \pi, \eta \in \EF \setminus \{ \mathbf{1} \} \rangle$.
  Using the fact that the order conditions corresponding to $\pi \shuffle \eta$ are satisfied automatically, we take $a - e$ to be a functional over $\EF /_\II$, that is $a - e \in (\EF /_\II)^*$.
  Dualizing $\EF/_\II$ yields the space of primitive elements $\Prim$ of $\EF$ with respect to the deshuffle coproduct $\Delta_\shuffle$ defined as
\[ \Prim := \Span\{ \pi \in \EF \; : \; \Delta_\shuffle (\pi) = \pi \otimes \mathbf{1} + \mathbf{1} \otimes \pi \}. \]
  Therefore, using the identification $(a - e)(\pi) = \langle a - e, \pi \rangle$, we have $a - e \in \Prim$. The detailed description of $\Prim$ is left for future work.
\end{remark}

\section{Exotic planar forests and their algebraic structure}
\label{section:decorated_exotic_forests}

In this section, we focus on the algebraic structures of exotic planar forests. We first recall the well-known D-algebra structure of the connection algebra of vector fields in Section \ref{sec:post_Hopf_algebra} and decorated planar forests in Section \ref{sec:decorated_forests}. We then explore how these structures change in the context of exotic planar forests in Section \ref{sec:exotic_forests}.
In particular, we show that exotic planar forests form both the Grossman-Larson and Munthe-Kaas-Wright Hopf algebras. We also observe that the D-algebra structure is preserved.
However, exotic planar forests, unlike decorated planar forests, do not form a post-Hopf algebra, since a connected exotic forest can have multiple roots, which violates the Leibniz rule for the grafting product.

\subsection{D-algebra of vector fields}
\label{sec:post_Hopf_algebra}

We recall the connection algebra $\mathfrak{X}(\MM)$ introduced in Section \ref{section:connection_algebra} with the connection $\triangleright$.
The deshuffle coproduct $\Delta_\shuffle\colon T(\mathfrak{X}(\MM))\rightarrow T(\mathfrak{X}(\MM))\otimes T(\mathfrak{X}(\MM))$ is, for $A$, $B\in T(\mathfrak{X}(\MM))$, $F\in \mathfrak{X}(\MM)$,
\[ \Delta_\shuffle (AB) = \Delta_\shuffle A \cdot \Delta_\shuffle B, \quad \Delta_\shuffle(F)=\textbf{1}\otimes F+F\otimes \textbf{1}, \]
where we recall $(A \otimes B) \cdot (C \otimes D) = (A \cdot C) \otimes (B \cdot D)$.
Following the extension \cite{Ebrahimi15otl} of the so-called Guin-Oudom process \cite{Oudom08otl}, the product $\triangleright$ extends to $T(\mathfrak{X}(\MM))$, for $A$, $B$, $C\in T(\mathfrak{X}(\MM))$, $F$, $G\in \mathfrak{X}(\MM)$, by
\begin{align*}
  \textbf{1} \triangleright A&=A, \quad A \triangleright \textbf{1} = 0,\\
(F\cdot A)\triangleright G&=F\triangleright (A\triangleright G)-(F\triangleright A)\triangleright G,\\
A\triangleright (B\cdot C)&=(A_{(1)}\triangleright B)\cdot(A_{(2)}\triangleright C),
\end{align*}
where we use the Sweedler notation w.r.t.\ts $\Delta_\shuffle$. We note that the extension of the Guin-Oudom process does not require $(\mathfrak{X}, -T, \triangleright)$ to have a post-Lie structure. The triple $(T(\mathfrak{X}(\MM)), \cdot, \triangleright)$ is called a D-algebra \cite{MuntheKaas08oth}.
That is, $T(\mathfrak{X}(\MM))$ is a graded associative algebra and for all $F \in \mathfrak{X}(\MM)$ and $A, B \in T(\mathfrak{X}(\MM))$, we have, $A \triangleright F \in \mathfrak{X}(\MM)$, as well as,
\begin{align*}
  F \triangleright (A \cdot B) &= (F \triangleright A) \cdot B + A \cdot (F \triangleright B), \\
  F \triangleright (A \triangleright B) &= (F \triangleright A) \triangleright B + (F \cdot A) \triangleright B.
\end{align*}

The tensor algebra is equipped with the associative non-commutative Grossman-Larson product
\[
A*B=A_{(1)}\cdot(A_{(2)}\triangleright B),\quad (A*B)\triangleright C=A\triangleright(B\triangleright C).
\]
Then, $(T(\mathfrak{X}(\MM)),\cdot, \Delta_\shuffle)$ and $(T(\mathfrak{X}(\MM)),*, \Delta_\shuffle)$ are Hopf algebras.
By the Cartier-Quillen-Milnor-Moore theorem, the Hopf algebra $(T(\mathfrak{X}(\MM)),*, \Delta_\shuffle)$ coincides with the universal enveloping algebra $\UU(\mathfrak{X}(\MM))$ of $(\mathfrak{X}(\MM),\llbracket \blank,\blank \rrbracket)$.
The Grossman-Larson product satisfies, in particular, for $F$, $G\in \mathfrak{X}(\MM)$:
\[
\llbracket F,G \rrbracket\triangleright \phi=(F*G-G*F)\triangleright \phi.
\]

\subsection{Decorated planar forests}
\label{sec:decorated_forests}

We recall the discussion of the decorated planar forests in Section \ref{section:decorated_forests}. We present an alternative definition of the decorated planar forests in Definition \ref{def:FD_alt}.

\begin{definition}
  \label{def:FD_alt}
  Let $\tau_1, \dots, \tau_n \in T_D$ be decorated planar trees, then, a \textit{decorated planar forest} $\pi \in F_D$ is a concatenation $\tau_1 \cdots \tau_n$. Let $\pi \in F_D$ be a decorated planar forest, then, a \textit{decorated planar tree} $\tau \in T_D$ is defined as $B^+_d (\pi)$ where the map $B^+_d : F_D \to T_D$ attaches all roots of $\pi$ to a new root decorated by $d$.
\end{definition}

We define the left grafting $\graft : \TT_D \otimes \TT_D \to \TT_D$, for $\tau, \gamma_1, \dots, \gamma_n \in T_D$, as
\[ \tau \graft B^+_d(\mathbf{1}) := B^+_d(\tau), \quad \tau \graft B_d^+(\gamma_1 \cdots \gamma_n) := B_d^+ \big( \tau \cdot \gamma_1 \cdots \gamma_n + \sum_{i=1}^n \gamma_1 \cdots (\tau \graft \gamma_i) \cdots \gamma_n \big). \]
It is extended to $\graft : \FF_D \otimes \FF_D \to \FF_D$, for $\tau, \gamma \in T_D, \pi, \eta, \mu \in F_D$, by Guin-Oudom process,
\begin{align*}
    \mathbf{1} \graft \pi &:= \pi, \quad \pi \graft \mathbf{1} := 0, \\
    (\tau \cdot \pi) \graft \gamma &:= \tau \graft (\pi \graft \gamma) - (\tau \graft \pi) \graft \gamma, \\
    \pi \graft (\eta \cdot \mu) &:= (\pi_{(1)} \graft \eta) \cdot (\pi_{(2)} \graft \mu).
\end{align*}
We note that $(\FF_D, \cdot, \graft)$ forms the free D-algebra over the set of decorations $D$, \cite{MuntheKaas08oth}.
The universal property of $(\FF_D, \cdot, \graft)$ allows us to define a D-algebra homomorphism $\dFdec : \FF_D \to \TT(\mathfrak{X}(\MM))$ by $\dFdec(\forest{b_d_90}) = f^d$ satisfying
\[ \dFdec (\pi_1 \cdot \pi_2) = \dFdec(\pi_1) \dFdec(\pi_2), \quad \dFdec(\pi_1 \graft \pi_2) = \dFdec(\pi_1) \triangleright \dFdec(\pi_2). \]

\begin{remark}
  \label{rmk:universal_enveloping_F_D}
    The space of decorated planar forests forms the tensor algebra $T(\TT_D)$ of the space of decorated planar trees. It is the universal enveloping Lie algebra $\FF_D = \UU(\Lie(\TT_D))$ of the post-Lie algebra $\Lie(\TT_D)$ with the Lie bracket $[x, y] = x \cdot y - y \cdot x$ for $x, y \in \Lie(\TT_D)$. Moreover, $(\FF_D, \cdot, \Delta_\shuffle, \graft)$ is a post-Hopf algebra \cite{Li23pha} and $\Lie(\TT_D)$ is the space of primitive elements with respect to the deshuffle coproduct $\Delta_\shuffle$, that is,
  \[ \Lie(\TT_D) = \{ \pi \in \FF_D \; : \; \Delta_\shuffle (\pi) = \pi \otimes \mathbf{1} + \mathbf{1} \otimes \pi \}. \]
    We emphasize that $\dFdec : \FF_D \to T(\mathfrak{X}(\MM))$ does not restrict to a Lie algebra morphism $Lie(\TT_D) \to \mathfrak{X}(\MM)$.
\end{remark}

The Grossman-Larson product over decorated planar forests is defined as
\[ \pi \gl \eta := \pi_{(1)} \cdot (\pi_{(2)} \graft \eta), \quad \text{for } \pi, \eta \in \FF_D, \]
and $\dFdec(\pi \gl \eta) = \dFdec(\pi) * \dFdec(\eta)$ due to the fact that $\dFdec$ is a D-algebra homomorphism. We generalize the concatenation and Grossman-Larson products by specifying the order in which the trees of the operands are concatenated.

Let $\pi, \eta \in \FF_D$ and let $\omega$ denote an admissible ordering of the roots of $\pi \cdot \eta$. An ordering $\omega$ is \emph{admissible} if the order of roots in $\pi$ and $\eta$ is preserved, that is, if $r_1$ and $r_2$ are roots of $\pi$ or $\eta$ and $r_1 \leq r_2$, then $r_1 \leq r_2$ according to $\omega$. We define $\cdot_\omega : \FF_D \times \FF_D \to \FF_D$ to be the concatenation product with roots arranged according to the ordering $\omega$. The Grossman-Larson product $\gl_\omega : \FF_D \times \FF_D \to \FF_D$ is defined as follows,
\[ \pi \gl_\omega \eta := \pi_{(1)} \cdot_\omega (\pi_{(2)} \graft \eta), \]
where the ordering $\omega$ is defined over the roots of $\pi \cdot \eta$.
For example
\begin{align*}
  \forest{b_a_270,b_b_270[b]} \cdot_\omega \forest{b_c_270[b,b],b_d_270[b[b]]} &= \forest{b_c_270[b,b],b_a_270,b_b_270[b],b_d_270[b[b]]}, \quad \text{for } \omega : c \leq a \leq b \leq d, \\
  \forest{b,b[b]} \gl_\omega \forest{b[b,b],b[b[b]]} &= \forest{b[b,b],b,b[b],b[b[b]]} + (\forest{b[b]} \graft \forest{b[b,b]}) \forest{b,b[b[b]]} + \forest{b[b,b],b} (\forest{b[b]} \graft \forest{b[b[b]]}) \\
  &\quad + (\forest{b} \graft \forest{b[b,b]}) \forest{b[b],b[b[b]]} + \forest{b[b,b],b[b]} (\forest{b} \graft \forest{b[b[b]]}) + \forest{b,b[b]} \graft \forest{b[b,b],b[b[b]]}.
\end{align*}
We note that $\pi \cdot_\omega \eta = \pi \cdot \eta$ and $\pi \gl_\omega \eta = \pi \gl \eta$ for $\omega$ which takes the roots of $\pi$ to be smaller than the roots of $\eta$. If the admissible ordering $\omega$ in $\pi \cdot_\omega \eta$ does not specify the order between some roots in $\pi \cdot \eta$, then, the roots of $\pi$ are taken to be smaller than the roots of $\eta$.
The generalised concatenation and Grossman-Larson products are used in the context of planar exotic forests to describe the structure of the Grossman-Larson Hopf algebra.

\subsection{Exotic planar forests}
\label{sec:exotic_forests}

The algebraic structures described in this section are applied in Section \ref{section:exotic_Lie_series} for the description of the composition law and the order conditions of stochastic frozen flow methods.

We construct the space $\EF$ from the space of decorated planar forest $\FF_D$ and inherit a number of algebraic structures from it. There are two possible approaches: with an appropriate subspace or with a quotient. We use the first approach to define the algebras and the second approach to define coalgebras. 

Let us consider decorated forests $\FF_D$ with $D = \{\forest{b}, \forest{i_d} \; : \; d \in \N \}$ and let $\FF_D^L$ denote its quotient by the subspace $\KK$ spanned by the forests in which a numbered node is found at a position which is not a leaf. It can be checked that $\KK$ is an ideal with respect to concatenation, shuffle, grafing, and Grossman-Larson products, and, therefore, $\FF^L_D$ forms an algebra when endowed with one of these products.
Let $\overline{\FF}^L_D$ denote its completion with respect to the grading given by the number of nodes, that is, $\overline{\FF}^L_D$ is a space of formal sums over $F_D^L$. We define the subspace $\FF_D^\EE$ of $\overline{\FF}_D^L$, corresponding to exotic forests, spanned by
%\begin{equation}
%\label{eq:EF_def}
\[  \varphi(\pi, \alpha_e) := \sum_{\alpha \in P(\alpha_e)} (\pi, \alpha),\]
%\end{equation}
with~$\varphi : \EF \to \FF_D^\EE$ an isomorphism.~$P(\alpha_e)$ is the set of decorations~$\alpha$ with~$\alpha^{-1}(\bullet) = \alpha_e^{-1}(\bullet)$ and 
\[ \alpha(v_1) = \alpha(v_2) \quad \text{if } \quad \alpha_e(v_1) = \alpha_e(v_2), \quad \text{for } v_1, v_2 \in V(\pi). \]
Some examples of the values of $\varphi$ are
\[ \varphi(\forest{b[1,1]}) = \sum_{i = 1}^\infty \forest{b[i_i,i_i]}, \quad \varphi(\forest{b[b[1,b[2,2]],1]}) = \sum_{i, j = 1}^\infty \forest{b[b[i_i,b[i_j,i_j]],i_i]}. \]

We take~$\varphi : \EF \to \FF_D^\EE$ to be an algebra homomorphism with respect to the generalised concatenation, shuffle, grafting, and generalised Grossman-Larson products which induces the corresponding algebraic structures over $\EF$.
For instance, the shuffle product $\shuffle : \EF\times \EF\rightarrow\EF$ permutes the roots of forests. Let $\pi \cdot_\omega \eta$ be the generalised concatenation obtained by concatenating the forests $\pi$ and $\eta$ with the roots permuted according to the admissible ordering $\omega$ of the roots of $\pi \cdot \eta$. The shuffle product is defined as
%\begin{equation}
%  \label{eq:shuffle_def}
\[  \pi \shuffle \eta = \sum_\omega \pi \cdot_\omega \eta,\]
%\end{equation}
where the sum is over admissible orderings.
For example, we find
\begin{align*}
    \forest{b[1],1} \shuffle \forest{b[b]} &= \forest{b[1],1,b[b]} + \forest{b[1],b[b],1} + \forest{b[b],b[1],1}, \\
    \forest{1,1} \shuffle \forest{b[b],b[1,1]} &= \forest{1,1,b[b],b[2,2]} + \forest{1,b[b],1,b[2,2]} + \forest{1,b[b],b[2,2],1} + \forest{b[b],1,1,b[2,2]} + \forest{b[b],1,b[2,2],1} + \forest{b[b],b[2,2],1,1},
\end{align*}
with $\mathbf{1}$ being the neutral element for $\shuffle$.

Let us now consider the subspace of $\FF_D^L$ orthogonal to $\FF_D^\EE$ with respect to the inner product with which $F_D^L$ forms an orthonormal basis of $\FF_D^L$. We denote it by $\II$ and we can see that it is a coideal in the coalgebra $(\FF_D^L, \Delta_\shuffle)$ using the fact that the deshuffle coproduct is adjoint to the shuffle product. Taking a quotient of $(\FF_D^L, \Delta_\shuffle)$ by $\II$ gives a definition of the coalgebra of exotic forests $(\EF, \Delta_\shuffle)$ through the isomorphism
\begin{equation}
\label{eq:EF_codef}
  \psi (\pi, \alpha_e) := (\pi, \alpha_e) + \II \quad \in \FF^L_D /_{\II}.
\end{equation}
We refer to \cite{Bronasco22cef} for a precise analogous definition with non-planar exotic forests.

An exotic forests $\pi$ is \emph{irreducible} if it cannot be written as a concatenation of two exotic forests, that is, $\pi \neq \gamma \cdot \eta$ for $\gamma, \eta \in EF \setminus \{\mathbf{1}\}$. The space of irreducible exotic forests is denoted by $\Irr \subset \EF$.
An exotic forests $\pi$ is \emph{connected} if it cannot be written as $\pi = \gamma \cdot_\omega \eta$ for any $\gamma, \eta \in EF \setminus \{\mathbf{1}\}$ and any ordering $\omega$ of the roots of $\gamma \cdot \eta$. The set of connected exotic forests is denoted by $EF_C$ and the corresponding space by $\Conn := \Span EF_C$. We note that $\Conn \subset \Irr$.

\begin{remark}
Analogously to Remark \ref{rmk:universal_enveloping_F_D}, the space of planar exotic forests forms the tensor algebra $T(\Irr)$ of $\Irr$, and, thus, a universal enveloping Lie algebra $\EF = \UU (\Lie(\Irr))$ with the Lie bracket $[x,y]=x\cdot y-y\cdot x$ on $\EF$.
The first terms are
\[
\Lie(\Irr)=\Span\{\forest{b},\forest{1,1},\forest{b[b]}, \forest{b[1,1]},\forest{b[1],1},\forest{1,b[1]},\forest{1,b,1},\forest{2,1,2,1},\forest{1,2,2,1},
[\forest{b},\forest{1,1}],\dots
\}.
\]
\end{remark}

In order to show that the space of exotic planar forests forms a Hopf algebra, we need to prove the compatibility between the algebraic and coalgebraic structures. We consider the product $\tilde\graft_\omega : \EF \otimes \EF \to \EF$ defined as
\begin{align*}
  \pi \tilde\graft_\omega \eta &= \pi \gl_\omega \eta - \pi \cdot_\omega \eta, \quad \text{for } \pi \in EF_C, \eta \in EF, \\
  (\pi \cdot_\omega \gamma) \tilde\graft_\omega \eta &= \pi \tilde\graft_\omega (\gamma \tilde\graft_\omega \eta) - (\pi \tilde\graft_\omega \gamma) \tilde\graft_\omega \eta, \quad \text{for } \gamma \in EF.
\end{align*}
Intuitively, the product $\tilde\graft_\omega$ is a modification of the generalised Grossman-Larson product $\gl_\omega$ in which we require each element of $EF_C$ in the left operand to attach at least one root to a node of the right operand. For example, let $\omega: a \leq c \leq b$, then,
\[ \forest{1_a_270,b_b_270[1]} \tilde\graft_\omega \forest{b_c_270[2,2]} = \forest{b_c_270[1,2,2],b_b_270[1]} + \forest{1_a_270,b_c_270[b[1],2,2]} + \forest{b_c_270[1,b[1],2,2]}. \]
We note the following property
%\begin{equation}
%\label{eq:gl_tildegraft}
\[  \pi \gl_\omega \eta = \pi_{(1)} \cdot_\omega (\pi_{(2)} \tilde\graft_\omega \eta), \quad \text{for } \pi, \eta \in \EF,\]
%\end{equation}
where $\Delta_\shuffle(\pi) = \pi_{(1)} \otimes \pi_{(2)}$ is the deshuffle coproduct defined on $\EF$. We note that $\pi \gl \eta = \pi \gl_\omega \eta$ with $\omega$ being the order for which $\pi \cdot_\omega \eta = \pi \cdot \eta$.
We emphasize that the product $\gl_\omega$ on $\EF$ is associative, since $(\EF, \gl_\omega)$ is a subalgebra of $(\overline{\FF}^L_D, \gl_\omega)$, where $\gl_\omega$ is associative. This follows from the fact that $\gl_\omega$ can be defined as $\gl$, with the roots arranged according to $\omega$ in the final step.

\begin{theorem}
  \label{thm:EF_GL_Hopf}
  The space of exotic forests $\EF$ together with Grossman-Larson product and deshuffle coproduct forms a Hopf algebra.
\end{theorem}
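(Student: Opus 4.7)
The plan is to verify the Hopf algebra axioms by leveraging the embedding of $\EF$ into $\overline{\FF}_D^L$ on the algebra side and the isomorphism $\psi : \EF \cong \FF_D^L / \II$ of \eqref{eq:EF_codef} on the coalgebra side, thereby reducing the bialgebra compatibility to the known post-Hopf structure on $\FF_D$ from Remark \ref{rmk:universal_enveloping_F_D}.

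For the algebra structure, associativity of $\gl$ on $\EF$ was just observed via the inclusion $(\EF, \gl) \subset (\overline{\FF}_D^L, \gl_\omega)$, and the empty forest $\mathbf{1}$ is checked directly to be a two-sided unit (inherited from the ambient algebra). For the coalgebra structure, I use that $\EF = T(\Irr)$ as an associative algebra and that the deshuffle $\Delta_\shuffle$ makes $(T(\Irr), \cdot, \Delta_\shuffle)$ into the classical shuffle Hopf algebra over the graded space $\Irr$: coassociativity and the counit $\epsilon(\pi) = \delta_{\pi,\mathbf{1}}$ are automatic, and $\Delta_\shuffle$ is already a morphism of algebras for the concatenation $\cdot$.

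The heart of the argument is the bialgebra identity $\Delta_\shuffle(\pi \gl \eta) = \Delta_\shuffle(\pi) \gl \Delta_\shuffle(\eta)$. I would prove it in two stages. First, substituting $\pi \gl \eta = \pi_{(1)} \cdot (\pi_{(2)} \tilde\graft \eta)$ and using multiplicativity of $\Delta_\shuffle$ for $\cdot$, the claim reduces to the compatibility
\[
\Delta_\shuffle(\pi \tilde\graft \eta) = (\pi_{(1)} \tilde\graft \eta_{(1)}) \otimes (\pi_{(2)} \tilde\graft \eta_{(2)}).
\]
Second, this is exactly the Guin-Oudom-type identity that holds in the post-Hopf algebra $(\FF_D, \cdot, \Delta_\shuffle, \graft)$ of Remark \ref{rmk:universal_enveloping_F_D}; the remaining task is to transfer it along $\psi$ to $\EF$. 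The key geometric observation enabling this descent is that every liana sits inside a single connected exotic tree, while the deshuffle only splits forests along the concatenation of their irreducible components, so lianas are never broken; consequently $\II$ is compatible with both $\cdot$ and $\tilde\graft$, and the identity on $\FF_D$ passes cleanly to the quotient.

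Finally, since $\EF = \bigoplus_{p \geq 0} \EF^p$ is a graded bialgebra with $\EF^0 = \Span\{\mathbf{1}\}$, it is connected graded, and the antipode exists and is determined by the standard recursion on degree. The main obstacle I expect is the descent step: one has to track carefully how the admissible ordering $\omega$ in $\cdot_\omega$ and $\gl_\omega$ interacts with the symmetrisation over liana labels implicit in $\varphi$, ensuring that no spurious cross-terms appear when the post-Hopf identity on $\FF_D$ is pushed down to $\EF$ through $\psi$.
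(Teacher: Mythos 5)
There is a genuine gap at the central step. Your reduction of the bialgebra axiom to the identity $\Delta_\shuffle(\pi \tildegraft \eta) = (\pi_{(1)} \tildegraft \eta_{(1)}) \otimes (\pi_{(2)} \tildegraft \eta_{(2)})$ is a reasonable strategy (using cocommutativity of $\Delta_\shuffle$ and its multiplicativity for $\cdot$), but your justification of that identity --- that it is ``exactly the Guin--Oudom-type identity that holds in the post-Hopf algebra $(\FF_D,\cdot,\Delta_\shuffle,\graft)$'' and merely needs to be transferred along $\psi$ --- is precisely what fails. The paper points this out in the remark immediately following the theorem: neither $(\EF,\cdot,\Delta_\shuffle,\graft)$ nor $(\EF,\cdot,\Delta_\shuffle,\tildegraft)$ is a post-Hopf algebra, because a \emph{connected} exotic forest (e.g.\ the liana $\forest{1,1}$) can have several roots, so the Leibniz-type axioms underlying the Guin--Oudom process break down. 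The products $\gl$ and $\tildegraft$ on $\EF$ are not restrictions of the corresponding operations on $\FF_D$ in any way compatible with $\psi$: the algebra structure is realised as a \emph{subalgebra} of $\overline{\FF}^L_D$ via $\varphi$, while the coalgebra structure is realised as a \emph{quotient} $\FF^L_D/_\II$ via $\psi$, and the atoms of the Guin--Oudom recursion change from single-rooted trees in $\FF_D$ to multi-rooted connected forests in $\EF$ (whose images under $\varphi$ are not even primitive in $\FF_D$). That these two realisations are compatible is the entire content of the theorem, so it cannot be invoked as a known fact to be ``pushed down through $\psi$''; your closing sentence correctly identifies this descent as the main obstacle, but the proposal supplies no argument for it.

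What the paper does instead is prove $\Delta_\shuffle(\pi \gl_\omega \eta) = \Delta_\shuffle(\pi) \gl_\omega \Delta_\shuffle(\eta)$ directly by induction on the number of connected components of $\pi$, peeling off one connected factor $\tilde\pi \in \Conn$ at a time via $(\tilde\pi \cdot_\omega \pi)\gl_\omega \eta = \tilde\pi \gl_\omega(\pi\gl_\omega\eta) - (\tilde\pi\tildegraft_\omega\pi)\gl_\omega\eta$. The base case is the substantive point your proposal is missing: for connected $\tilde\pi$ (which is primitive for $\Delta_\shuffle$), grafting its several roots onto distinct connected components of $\eta$ merges those components into a single one, so no liana or connection is ever split by the deshuffle and one gets
\[ \Delta_\shuffle(\tilde\pi \gl_\omega \eta) = (\tilde\pi \gl_\omega \eta_{(1)}) \otimes \eta_{(2)} + \eta_{(1)} \otimes (\tilde\pi \gl_\omega \eta_{(2)}). \]
To repair your proof you would need to establish this combinatorial fact (or an equivalent statement about $\tildegraft$) by a direct argument on exotic forests, rather than by appeal to the post-Hopf structure of $\FF_D$.
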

\begin{proof}
    We note that the algebra structure is obtained by considering a subalgebra of $(\overline{\FF}^L_D, \gl)$, while the coalgebra structure is obtained by taking a quotient of the coalgebra $(\FF^L_D, \Delta_\shuffle)$. This means that the algebra and coalgebra structures are not automatically compatible. We prove the compatibility condition of the generalised Grossman-Larson product for any ordering $\omega$,
    \[ \Delta_\shuffle (\pi \gl_\omega \eta) = \Delta_\shuffle (\pi) \gl_\omega \Delta_\shuffle (\eta), \]
    by induction on the number of connected components in $\pi$. Assume the compatibility is proven for all forests with the number of components less or equal to the number of components in $\pi$. Consider $\tilde\pi \cdot_\omega \pi$ with $\tilde\pi \in EF_C$ and ordering $\omega$ of roots of $\tilde\pi \cdot \pi \cdot \eta$ with an arbitrary order between the roots of $\tilde\pi \cdot \pi$ and $\eta$, then,
    \begin{align*}
      \Delta_\shuffle ((\tilde\pi \cdot_\omega \pi) \gl_\omega \eta) &= \Delta_\shuffle (\tilde\pi \gl_\omega (\pi \gl_\omega \eta) - (\tilde\pi \tildegraft_\omega \pi) \gl_\omega \eta) \\
        &= \Delta_\shuffle (\tilde\pi) \gl_\omega \Delta_\shuffle(\pi) \gl_\omega \Delta_\shuffle(\eta) - \Delta_\shuffle (\tilde\pi \tildegraft_\omega \pi) \gl_\omega \Delta_\shuffle (\eta) \\
        &= \Delta_\shuffle (\tilde\pi \gl_\omega \pi - \tilde\pi \tildegraft_\omega \pi) \gl_\omega \Delta_\shuffle(\eta) = \Delta_\shuffle (\tilde\pi \cdot_\omega \pi) \gl_\omega \Delta_\shuffle(\eta),
    \end{align*}
    where we use the associativity of $\gl_\omega$ and coassociativity of $\Delta_\shuffle$. It remains to show that 
    \[ \Delta_\shuffle (\tilde\pi \gl_\omega \eta) = \Delta_\shuffle (\tilde\pi) \gl_\omega \Delta_\shuffle (\eta), \quad \text{for } \tilde\pi \in \Conn, \eta \in \EF. \]
    We note that $\tilde\pi$ has multiple roots which, when grafted onto different connected components of $\eta$, connect them into a single component. Therefore, we have
    \[ \Delta_\shuffle (\tilde\pi \gl_\omega \eta) = (\tilde\pi \gl_\omega \eta_{(1)}) \otimes \eta_{(2)} + \eta_{(1)} \otimes (\tilde\pi \gl_\omega \eta_{(2)}) = \Delta_\shuffle (\tilde\pi) \gl_\omega \Delta_\shuffle (\eta). \]
  This proves the compatibility between the Grossman-Larson product and deshuffle coproduct. Therefore, $\EF$ forms a graded connected bialgebra, that is, a Hopf algebra.
\end{proof}

\begin{remark}
  The space $(\EF, \cdot, \Delta_\shuffle, \graft)$ is not a post-Hopf algebra, as defined in \cite{Li23pha}, since in general
\[\pi \graft (\pi_1 \cdot \pi_2) \neq (\pi_{(1)} \graft \pi_1) \cdot (\pi_{(2)} \graft \pi_2) \text{ in } \EF. \]
  Moreover, $(\EF, \cdot, \Delta_\shuffle, \tilde\graft)$ is not a post-Hopf algebra for the same reason. This is due to the fact that a connected planar exotic forest can have multiple roots. However, we see that $(\EF, \cdot, \graft)$ is a D-subalgebra of $(\overline{ \FF }^L_D, \cdot, \graft)$, as defined in \cite{MuntheKaas08oth}, with the grading of the associative algebra $(\EF, \cdot)$ given by the number of roots. A straightforward extension of the arguments in \cite{Oudom08otl} fails to show that the spaces $(\EF, \graft)$ and $(\EF, \tilde\graft)$ are either brace or symmetric brace algebras for the same reason.
\end{remark}

%Planar exotic aromatic forests $\EF$ with Grossman-Larson product and deshuffle coproduct form the planar Grossman-Larson Hopf algebra.

The Connes-Kreimer Hopf algebra structure is the dual of the Grossman-Larson one in a Euclidean setting. On homogeneous spaces, it generalises into the Munthe-Kaas-Wright Hopf algebra \cite{MuntheKaas08oth, Ebrahimi24aso}.

Let an admissible cut $c$ of an exotic planar tree $\tau$ be a subset of edges of $\tau$ such that it contains at most one edge from each path in $\tau$ from a leaf to the root. If $e = (u, v)$ is in $c$, then all incoming edges of $v$ which are to the left of $e$ are also in $c$. Moreover, given a liana with nodes $v_1$ and $v_2$, if any edge in the path from $v_1$ to the root is in $c$, then the path from $v_2$ to the root must also contain an edge in $c$.

Given an admissible cut $c$, $\tau \setminus c$ is an exotic planar forest. Let $R^c(\tau)$ be the component of $\tau \setminus c$ which contains the root of $\tau$. Let $P^c(\tau)$ be the linear combination of exotic planar forests obtained by taking the components of $\tau \setminus c$ that do not contain the root of $\tau$ and shuffling the roots while preserving the order of the roots cut off from the same node.

\begin{theorem}
\label{theorem_MKW}
Let the Munthe-Kaas-Wright coproduct on~$\EF$ be, for $\tau \in ET$ and $\pi \in EF$, defined as
  \begin{align*}
    \Delta_{MKW} (\tau) &:= \sum_{\text{adm. cut } c} P^c(\tau) \otimes R^c(\tau), \\
    \Delta_{MKW} (\pi) &:= (\text{id} \otimes B^-)\Delta_{MKW}(B^+(\pi)),
  \end{align*}
where the sum is over admissible cuts $c$. 
  Then, $(\EF,\shuffle,\Delta_{MKW})$ is a Hopf algebra dual to the planar Grossman-Larson Hopf algebra $(\EF, \gl, \Delta_\shuffle)$. Its convolution product,
  \[ a * b = (a \otimes b) \circ \Delta_{MKW}, \]
  is called the composition law.
\end{theorem}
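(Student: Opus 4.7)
The plan is to identify $(\EF, \shuffle, \Delta_{MKW})$ as the graded dual of the Grossman-Larson Hopf algebra $(\EF, \gl, \Delta_\shuffle)$ established in Theorem \ref{thm:EF_GL_Hopf}. Since each homogeneous component $\EF^p$ is finite dimensional, the graded dual automatically inherits a Hopf algebra structure, with product dual to $\Delta_\shuffle$ (which is the shuffle $\shuffle$ by construction of the pairing) and coproduct dual to $\gl$. The work then reduces to showing that the coproduct defined concretely via admissible cuts respecting lianas is precisely this dual of $\gl$.

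First I would verify that $\Delta_{MKW}$ is well-defined on $\EF$. The planar admissibility condition (left siblings of a cut edge are also cut) guarantees that $\tau \setminus c$ is a planar forest, while the liana condition guarantees that no numbered leaf is orphaned from its partner, so that both $P^c(\tau)$ and $R^c(\tau)$ lie in $\EF$. The shuffling of roots in $P^c(\tau)$ that preserves the order of roots cut from a common node produces a well-defined element of $\EF$, and the extension to forests via $\Delta_{MKW}(\pi) = (\id \otimes B^-)\Delta_{MKW}(B^+(\pi))$ is immediate.

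The heart of the proof is the duality identity
\[
\langle \pi \gl \eta,\; \mu \rangle \;=\; \langle \pi \otimes \eta,\; \Delta_{MKW}(\mu) \rangle, \quad \pi,\eta,\mu \in \EF,
\]
for the natural pairing on $\EF$ inherited from $\FF_D^L/\II$ through $\psi$ in \eqref{eq:EF_codef}. I would lift to the decorated planar setting, apply the classical Munthe-Kaas-Wright duality of \cite{MuntheKaas08oth} between admissible cuts and the Grossman-Larson product on $\FF_D$, and track the descent through $\FF_D^\EE \subset \overline{\FF}_D^L$ for the algebra side and through the quotient by $\II$ for the coalgebra side. Once this identity holds, dualizing the bialgebra axiom of Theorem \ref{thm:EF_GL_Hopf} gives the bialgebra axiom for $(\EF, \shuffle, \Delta_{MKW})$, and the antipode exists automatically by graded connectedness.

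The main obstacle will be the combinatorial bookkeeping matching the admissible orderings $\omega$ appearing in the generalized Grossman-Larson product $\gl_\omega$ of Section \ref{sec:exotic_forests} with the root-shuffle prescription in $P^c(\tau)$. Each admissible ordering of the roots of $\pi \cdot \eta$ in $\pi \gl_\omega \eta$ should correspond bijectively, under the pairing, to a shuffled root configuration in $P^c$, while the liana condition on the cut is dual to the symmetrization over numbered decorations built into the embedding $\varphi : \EF \to \FF_D^\EE$. This planar, exotic refinement of the classical argument is the only genuinely new combinatorial content in the proof.
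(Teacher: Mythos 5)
Your proposal is correct and follows essentially the same route as the paper's proof: both reduce to the classical Munthe-Kaas-Wright/Grossman-Larson duality over decorated planar forests and then transport it to $\EF$ by observing that the subalgebra construction ($\FF_D^\EE\subset\overline{\FF}_D^L$) on the product side is dual to the quotient by $\II$ on the coproduct side, with shuffle/deshuffle duality and graded connectedness supplying the Hopf structure. Your version simply spells out the duality pairing, the well-definedness of the cuts, and the $\omega$-versus-root-shuffle bookkeeping that the paper leaves implicit.
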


\begin{proof}
  We recall that the dual of a Hopf algebra is a Hopf algebra. It is clear that $\shuffle$ and $\Delta_\shuffle$ are dual. It remains to show that the Munthe-Kaas-Wright coproduct $\Delta_{MKW}$ is dual to the Grossman-Larson product $\gl$. We recall that Munthe-Kaas-Wright coproduct and Grossman-Larson product are dual over the planar decorated forests. The Munthe-Kaas-Wright coproduct over exotic forests is obtained by taking the quotient over $\II$ defined in \eqref{eq:EF_codef} for the construction of deshuffle coproduct. This is the dual approach to the way the Grossman-Larson product over exotic forests is obtained.
\end{proof}

Some examples of the values of $\Delta_{MKW}$ are
\begin{align*}
  \Delta_{MKW} (\forest{1,1}) &= \forest{1,1} \otimes \mathbf{1} + \mathbf{1} \otimes \forest{1,1}, \\
  \Delta_{MKW} (\forest{b[1,1]}) &= \forest{b[1,1]} \otimes \mathbf{1} + \forest{1,1} \otimes \forest{b} + \mathbf{1} \otimes \forest{b[1,1]}, \\ 
  \Delta_{MKW} (\forest{b[1],b[b[1]]}) &= \forest{b[1],b[b[1]]} \otimes \mathbf{1} + 2 \forest{1,1} \otimes \forest{b,b[b]} + \big( \forest{1,b[1]} + \forest{b[1],1} ) \otimes \forest{b,b} + \big( \forest{1,b[1]} + \forest{b[1],1} ) \otimes \forest{b[b]}\\& + \big( \forest{1,b[b[1]]} + \forest{b[b[1]],1} ) \otimes \forest{b}  + 2 \forest{b[1],b[1]} \otimes \forest{b} + \mathbf{1} \otimes \forest{b[1],b[b[1]]}.
\end{align*}

The Hopf algebra structures on $\EF$ translate to the structures on $T(\mathfrak{X}(\MM))$ via the use of the elementary differential map $\dF$ defined in Section \ref{section:exotic_forests}.
We recall that decorated planar forest form a free D-algebra with the concatenation and grafting products \cite{MuntheKaas08oth} which is generated by the set of decorations $D$. The elementary differential map over decorated planar forests $\dFdec : \FF_D \to T(\mathfrak{X}(\MM))$ is a D-algebra homomorphism that is fully characterized by its values on the nodes decorated by $D = \{ \bullet, \circ^l \; : \; 1 \leq l \leq L \}$,
\[ \dFdec(\bullet) = f, \quad \dFdec(\circ^l) = \xi^{d,l} E_d, \]
where the $\xi^{d,l}$ are independent standard Gaussian random variables. The map $\dFdec$ thus defined is equivalent to the map $\dFdec$ from Definition \ref{def:dFdec}.

The space of exotic forests $\EF$ is isomorphic to a D-subalgebra $\FF_D^\EE$ of the D-algebra of decorated forests with $D = \{ \forest{b}, \forest{i_d} \; : \; d \in \N \}$ through the isomorphism $\varphi: \EF \to \FF_D^\EE$ as is described in the beginning of Section \ref{sec:exotic_forests}, and, thus, inherits the elementary differential homomorphism as $\dF := \tildedFdec \circ \varphi$ with $\tildedFdec : \FF_D^\EE \to T(\mathfrak{X}(\MM))$ defined as
\[ \tildedFdec(\forest{b}) = f, \quad \tildedFdec(\forest{i_d}) = E_d. \]
This definition is equivalent to the one given in Definition \ref{def:exotic_elementary_differential}.

\begin{proposition}
\label{prop:F_morphism}
The elementary differential $\F\colon \EF\rightarrow T(\mathfrak{X}(\MM))$ is a Hopf algebra morphism:
\[
\F(\pi_1\graft\pi_2)=\F(\pi_1)\triangleright \F(\pi_2),\quad
\F(\pi_1\cdot\pi_2)=\F(\pi_1)\cdot \F(\pi_2),\quad
\F(\pi_1\diamond\pi_2)=\F(\pi_1)* \F(\pi_2).
\]
In particular, the frozen composition is represented by concatenation and the composition of differential operators is represented by the Grossman-Larson product:
\begin{align*}
(\F(\pi_1)\cdot \F(\pi_2))\triangleright\phi(p)&=
\F_p(\pi_1)\triangleright(\F_p(\pi_2)\triangleright\phi)(p)=\F(\pi_1\cdot\pi_2)\triangleright\phi(p),\\
(\F(\pi_1)* \F(\pi_2))\triangleright\phi(p)&=
\F(\pi_1)\triangleright (\F(\pi_2) \triangleright \phi)(p) = \F(\pi_1 \gl \pi_2)\triangleright\phi(p).
\end{align*}
\end{proposition}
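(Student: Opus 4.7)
The plan is to establish the three morphism identities in sequence, relying on the factorization $\F = \tildedFdec \circ \varphi$, where $\varphi : \EF \to \FF_D^\EE \subset \overline{\FF}_D^L$ is the isomorphism constructed in Section \ref{sec:exotic_forests} as an algebra homomorphism for the generalised concatenation, grafting, and generalised Grossman-Larson products. It thus suffices to verify that $\dFdec : \FF_D \to T(\mathfrak{X}(\MM))$ respects $\cdot$, $\graft$, and $\gl$, and to pull the conclusions back through $\varphi$.

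The preservation of concatenation is immediate from Definition \ref{def:dFdec}: the nodes of $\pi_1 \cdot \pi_2$ are the disjoint union of those of $\pi_1$ and $\pi_2$, the roots concatenate, and the defining sum factors into the tensor product $\dFdec(\pi_1) \cdot \dFdec(\pi_2)$. For the grafting, recall from Section \ref{sec:decorated_forests} that $(\FF_D, \cdot, \graft)$ is the \emph{free} D-algebra on $D$. Since $(T(\mathfrak{X}(\MM)), \cdot, \triangleright)$ is a D-algebra by Section \ref{sec:post_Hopf_algebra} and $\dFdec$ is prescribed on generators by $\dFdec(\bullet) = f$ and $\dFdec(\circ^l) = \xi^{d,l} E_d$, the universal property furnishes a unique D-algebra morphism extending these values; a direct node-by-node comparison with Definition \ref{def:dFdec} identifies this morphism with $\dFdec$ itself, so $\dFdec(\pi_1 \graft \pi_2) = \dFdec(\pi_1) \triangleright \dFdec(\pi_2)$.

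For the Grossman-Larson product, I would work at the level of the action on test functions, using the defining property $(A * B) \triangleright \phi = A \triangleright (B \triangleright \phi)$ of $*$ recalled in Section \ref{sec:post_Hopf_algebra}. The Guin-Oudom-type formula $\pi_1 \gl \pi_2 = (\pi_1)_{(1)} \cdot ((\pi_1)_{(2)} \graft \pi_2)$ on $\EF$ and the analogous identity $A \triangleright (B \cdot C) = (A_{(1)} \triangleright B) \cdot (A_{(2)} \triangleright C)$ on $T(\mathfrak{X}(\MM))$, combined with the two morphism properties just established, yield by induction on $|\pi_1|$
\[ \F(\pi_1 \gl \pi_2) \triangleright \phi = \F(\pi_1) \triangleright (\F(\pi_2) \triangleright \phi) = (\F(\pi_1) * \F(\pi_2)) \triangleright \phi, \]
and the equality in $T(\mathfrak{X}(\MM))$ follows from the faithfulness of the action of the tensor algebra on smooth functions at the formal frame level. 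The ``In particular'' clause is then immediate: the first identity is the definition of the frozen composition from Section \ref{section:connection_algebra}, and the second is the defining property of $*$ recalled above. The main obstacle is precisely this third step, because in $\EF$ the lianas produce connected forests such as $\forest{1,1}$ that are primitive for $\Delta_\shuffle$ yet whose $\F$-images are not primitive for the standard deshuffle coproduct on $T(\mathfrak{X}(\MM))$; the otherwise tempting strategy of showing that $\F$ is a coalgebra morphism and invoking Guin-Oudom on both sides does not apply directly, which is why the detour through the action on test functions is the natural resolution.
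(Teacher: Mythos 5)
Your overall route is the paper's own: the paper states Proposition \ref{prop:F_morphism} without a separate proof, treating it as an immediate consequence of the construction in the two preceding paragraphs ($\dFdec$ is the unique D-algebra morphism out of the free D-algebra $\FF_D$ determined by its values on the decorations, $\varphi\colon\EF\to\FF_D^\EE$ is an isomorphism onto a D-subalgebra that is by construction a homomorphism for the generalised concatenation, grafting and Grossman--Larson products, and $\F=\tildedFdec\circ\varphi$). Your treatment of the concatenation and grafting identities is exactly this argument made explicit, and is fine.

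The one step that would fail as literally written is the Guin--Oudom formula you invoke \emph{on} $\EF$: the identity $\pi_1\gl\pi_2=(\pi_1)_{(1)}\cdot((\pi_1)_{(2)}\graft\pi_2)$ with the plain grafting is false there; the paper's version on $\EF$ reads $\pi\gl_\omega\eta=\pi_{(1)}\cdot_\omega(\pi_{(2)}\tildegraft_\omega\eta)$ with the modified product $\tildegraft$. Concretely, for $\pi_1=\forest{1,1}$ the deshuffle coproduct on $\EF$ gives only $\forest{1,1}\otimes\mathbf{1}+\mathbf{1}\otimes\forest{1,1}$, so your formula produces $\forest{1,1}\cdot\pi_2+\forest{1,1}\graft\pi_2$ and omits the cross terms in which exactly one leg of the liana is grafted onto $\pi_2$ --- terms that are present in $\F(\forest{1,1})*\F(\pi_2)=\sum_d\big(E_dE_d\cdot B+2\,E_d\cdot(E_d\triangleright B)+(E_dE_d)\triangleright B\big)$ with $B=\F(\pi_2)$. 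The repair is the one your closing sentence already gestures at: run the induction not on $\EF$ but on $\overline{\FF}^L_D$, where the plain Guin--Oudom formula does hold and where $\dFdec$ is additionally a coalgebra morphism for $\Delta_\shuffle$ (trees are primitive and are sent to vector fields, which are primitive), so that $\dFdec(\pi\gl\eta)=\dFdec(\pi)*\dFdec(\eta)$ follows directly; then transport the identity through $\varphi$, since the Grossman--Larson product on $\EF$ is by definition the one making $\varphi$ a homomorphism. With that substitution your argument is complete and coincides with the paper's.
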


\section{Numerical experiments}
\label{section:numerical_experiments}

We provide several numerical experiments to confirm the theoretical findings and the numerical properties of the new schemes.
In particular, we present a practical guide for the implementation of frozen flow methods, convergence curves for the weak error on the orthogonal group, a comparison with the sampling geometric methods from the literature on the sphere, and an ergodicity test for the simulation of generalised Cauchy measures in a context where the assumptions of our convergence analysis are not satisfied.

\subsection{Practical implementation of frozen flow methods}
\label{section:practical_implementation}

The implementation of frozen flow methods relies on the ability to find local frames and to compute the solution of the frozen equation
\[y'(t)=\sum_d \alpha_d E_d(y(t)),\quad y(0)=y_0.\]
On homogeneous manifolds, we recall that global frames can be derived from any basis $(A_d)$ of the Lie algebra $\mathfrak{g}$ with the derivative of Lie-group action: $E_d(p)=A_d\cdot p$.
On matrix homogeneous manifolds, the solution of the frozen flow equation coincides with the matrix exponential:
\[y(t)=\Exp(\sum_d \alpha_d A_d)\cdot y_0.\]

For sampling the invariant measure of Riemannian Langevin dynamics \eqref{equation:Langevin}, the algorithms require at each step a local orthonormal frame basis and the solution of the frozen flow equation. While one can change the orthonormal local frame between different steps, it is crucial to rely only on one fixed frame per step.
On Lie groups, choosing an orthonormal basis $(A_d)$ of the Lie algebra $\mathfrak{g}$ for the definition of the $E_d$ naturally yields an orthonormal frame. On general Riemannian manifolds, one can provide an orthonormal basis of $T_{X_0} \MM$ and parallel transport it around $X_0$ to obtain a local orthonormal frame basis.

In general, solving the frozen flow equation is not straightforward. It is equally difficult to solving the geodesic equation, as done in \cite{Bharath23sae}.
Similar to the idea of retractions (see, for instance, the textbook \cite{Absil08oao}), the frozen flow could be replaced by any approximation with sufficiently high order of accuracy and that lies on the manifold. A standard example of such an approximation is the Cayley map \cite{Iserles00lgm}.
In our numerical experiments, we rely on exact expressions of the frozen flows using either Lie geometry or coordinates for the sake of simplicity.
%For instance, for $\MM=\SO_m(\R)$ the special orthogonal group, the solution of the frozen flow can be substituted with the Cayley map
%\[\hat{y}(t)=(I_m-tA_\alpha/2)^{-1}(I_m+tA_\alpha/2)y_0,\quad A_\alpha=\sum_d \alpha_d A_d.\]
%As $y(t)$ and $\hat{y}(t)$ match up to $\OO(t^3)$, a weak order one method using the Cayley map is still of order one. A retraction of order four would preserve the weak order two. \cite{}

%Solving the associated frozen flow equation is not straightforward. It can be solved numerically but it kinda defeats the point. Look into the papers of Brynjulf, Elena, Verdier, ...

We emphasize that the geometric operations needed for stochastic intrinsic methods are available on a wide collection on manifolds and in a variety of programming language (see, for instance, \cite{Engo01dao, MuntheKaas16ioh, Axen23mae}).
%The methods of this paper and examples are available in the Julia package \url{https://github.com/AdrienAngeAndre/JuManGI}.

%a difference is that we need the Ito correction.
%Order of retraction important or else reduction of order observed.

\subsection{Brownian dynamics on the special orthogonal group}

Let $\MM=\SO_p(\R)$ be the compact Lie group of special orthogonal matrices of size $p\times p$. Let $(A_d)$ be an orthonormal basis of its Lie algebra $\mathfrak{so}_d$ of antisymmetric matrices and for the bi-invariant metric $g(A,B)=\Trace(A^TB)$.
Let $E_d(y)=A_d y$ be the associated right-invariant orthonormal frame basis.
As the manifold is smooth and compact, the geometric assumptions are all satisfied automatically and our analysis applies.

Following Example \ref{example:BD}, Brownian dynamics on $\SO_p(\R)$ are given by
\[
dX(t)=\sum_{d=1}^D E_d(X(t)) \circ dW_d(t), \quad D=\dim(\mathfrak{so}_p)=\frac{p(p-1)}{2}. %\quad X(t)=\Exp(\sum_{d=1}^D A_d W_d(t))X_0,
\]
In this context, the implementation of the new frozen flow methods is very natural as it relies only on fast linear algebra operations.
In particular, the new second order method \eqref{equation:new_Brownian_method} becomes (under a time rescaling)
\[
X_{n+1}=
\Exp\bigg(\sum_{d=1}^D \Big(\big(\frac{\sqrt{2}}{2}-1\big)\sqrt{h}\xi^{d,1}_n+\frac{\sqrt{2}}{2}\sqrt{h}\xi^{d,2}_n\Big) A_d\bigg)
\Exp\bigg(\sum_{d=1}^D \sqrt{h}\xi^{d,1}_n A_d\bigg)
X_n.
\]
Note that there is no timestep restriction for the scheme to be well-posed.
This is in striking contrast to projection methods, which would rely on a tedious non-geometric implementation, on the embedding of $\SO_p(\R)$ into $\R^{p\times p}$ (of more than double dimension), and would require a small enough timestep for the projection map to be well-defined, even in this simple context of a compact manifold.
%
%On the other hand, the implementation of projection methods is tedious: it relies on the embedding of $\SO_p(\R)$ into $\R^{p\times p}$ (of more than double dimension), often uses an implicit fixed point iteration to compute the projection of each step on the manifold, and requires a small enough timestep for the projection map to be well-defined, even if this simple context of a compact manifold.

We consider the weak approximation of Brownian dynamics on $\SO_3(\R)$ with the Euler frozen flow method \eqref{equation:Euler_FF} and the new second order method \eqref{equation:new_Brownian_method} (with discrete bounded random variables with correct first moments - see Section \ref{section:new_FF_integrators}) for the test function
\[
\phi(x)=\exp(-\frac{\Trace((x-I_d)^T(x-I_d))}{2d})
%=\exp(\frac{\Trace(x)}{d}-1)
,\]
the initial condition $X_0=I_d$, final time $T=1$, and different timesteps to observe the order of convergence.
We emphasize that the explicit expression of Brownian dynamics $X(t)$ on $\SO_3(\R)$ are not given by $\Exp(\sum A_d W_d(t))X_0$, but rather by a stochastic Magnus expansion (see, for instance, \cite{Burrage99rkm, Burrage99hso, Iserles00lgm, Wang20tme, Kamm21ots}):
\[X(t)=\Exp(\Omega(t))X_0, \quad d\Omega(t)=d\Exp^{-1}_{\Omega(t)}(\sum_d A_d \circ d W_d(t)), \quad \Omega(0)=0,\]
whose first terms are
\begin{align*}
\Omega(t)&=\sum_{d_1} A_{d_1} W_{d_1}(t)
-\frac{1}{2}\int_0^t \sum_{d_1,d_2} [A_{d_2},A_{d_1}] W_{d_2}(s) \circ dW_{d_1}(s)\\
&+\frac{1}{12}\int_0^t \sum_{d_1,d_2,d_3} [A_{d_3},[A_{d_2},A_{d_1}]] W_{d_3}(s) W_{d_2}(s) \circ dW_{d_1}(s)\\
&+\frac{1}{4}\int_0^t \int_0^s \sum_{d_1,d_2,d_3} [[A_{d_3},A_{d_2}] ,A_{d_1}]] W_{d_3}(u) \circ dW_{d_2}(u) \circ dW_{d_1}(s)
+\dots
\end{align*}
%For $T=1$ and in $\SO_3(\R)$, the Rodrigues formula yields
%\[d\Omega(t)=(\Exp(\Omega(t))-I_3)^{-1}\Omega(t)\sum_d A_d \circ d W_d(t)
%=(\frac{\sigma}{2\sin(\sigma)}I_3-\frac{1-\cos(\sigma)}{2\sin(\sigma)^2}\Omega(t))\sum_d A_d \circ d W_d(t),\]
%where $\Omega=\omega_1A_1+\omega_2 A_2+\omega_3 A-3$ and $\sigma=\sqrt{(\omega_1^2+\omega_2^2+\omega_3^2)/2}$.
%
% is computed with the help of the Rodrigues formula on $\SO_3(\R)$:
%\[
%X(1)
%=I_3+\frac{\sin(\sigma)}{\sigma}A+\frac{1-\cos(\sigma)}{\sigma^2}A^2, \quad A=\frac{1}{\sqrt{2}}\begin{pmatrix}
%0&\xi_1&\xi_2\\-\xi_1&0&\xi_3\\-\xi_2&-\xi_3&0
%\end{pmatrix},\quad \sigma=\frac{\sqrt{\xi_1^2+\xi_2^2+\xi_3^2}}{\sqrt{2}},\quad \xi_i\sim\NN(0,1).
%\]
%which gives
%\[
%\E[\phi(X(1))]
%=\E[\exp(-\frac{2}{3}(1-\cos(\sigma)))]
%=\frac{4 e^{-2/3}}{\sqrt{\pi}}\int_0^\infty y^2 \exp(\frac{2}{3}\cos(y)-y^2) dy
%%\approx 0.688645739838878172314801177134.
%\]
For simplicity, the reference solution for $\E[\phi(X(T))]$ is chosen as the output of the new second order scheme with the timestep $h_{\text{ref}}=2^{-12}$.
We observe on Figure \ref{figure:Plot_SO} the correct order of convergence of the two new methods. The second order integrator reaches the Monte-Carlo error threshold almost instantly, even for relatively large timestep.

\begin{figure}[ht]
	\begin{minipage}[c]{.44\linewidth}
		\begin{center}
		\includegraphics[scale=0.48]{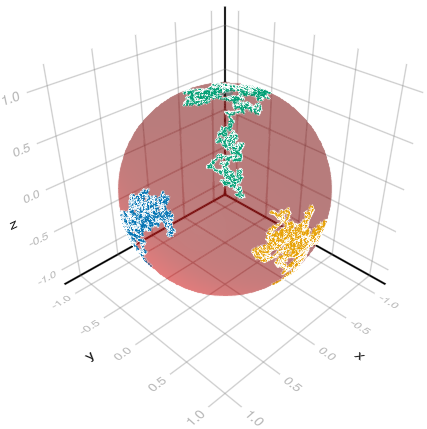}
		\end{center}
	\end{minipage} \hfill
	\begin{minipage}[c]{.54\linewidth}
		\begin{center}
		\includegraphics[scale=0.4]{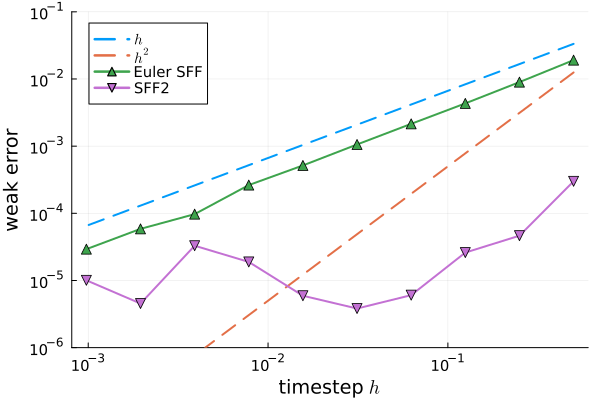}
		\end{center}
	\end{minipage}
	\caption{Brownian excursion at time $T=1$ on $\SO_3(\R)$ represented by the trajectories of the three columns on the sphere $\S^2$ (left) and error curve for the weak approximation of Brownian dynamics on $\SO_3(\R)$ with $\phi(x)=\exp(-\Trace((x-I_d)^T(x-I_d))/2d)$, $T=1$ and $M=10^8$ trajectories (right).}
	\label{figure:Plot_SO}
\end{figure}

%DIFFERENT $d$???
%
%CITER DELARUE???
%
%more citations 

\subsection{Ergodic dynamics on the sphere}

Our second test case focuses on the sampling of Riemannian Langevin dynamics on the sphere $\S^2$, following the experiment of \cite{Bharath23sae}.
While the approximation for the invariant measure is not the focus of the new methods implemented in this paper, this example allows us to conveniently compare the performance of the new frozen flow methods to the alternatives from the literature.
The sphere $\S^d$ can be seen as a homogeneous manifold with respect to the Lie group $G=\SO_{d+1}(\R)$, so that we could consider a natural frame derived from a basis of the Lie algebra $\mathfrak{so}_{d+1}$ (which would typically fit the simulation of a stochastic rigid body). However, we need an orthonormal frame to sample from the Riemannian Langevin dynamics \eqref{equation:Langevin}, so that we use a different choice of frame relying on the standard spherical coordinates.
We consider the coordinates on $\S^2$ minus the north/south poles:
\[
y_{\theta,\varphi}=\begin{pmatrix}
\cos(\theta)\cos(\varphi)\\\cos(\theta)\sin(\varphi)\\\sin(\theta)
\end{pmatrix}\in \S^2,\quad \theta\in ]-\frac{\pi}{2},\frac{\pi}{2}[,\quad \varphi\in[0,2\pi[,
\]
with the associated orthonormal frame basis:
\[
E_1(y_{\theta,\varphi})=\begin{pmatrix}
-\sin(\theta)\cos(\varphi)\\-\sin(\theta)\sin(\varphi)\\\cos(\theta)
\end{pmatrix},
\quad
E_2(y_{\theta,\varphi})=\begin{pmatrix}
-\sin(\varphi)\\\cos(\varphi)\\0
\end{pmatrix},
\]
and a second frame associated to the spherical coordinates on $\S^2$ minus $[\pm 1,0,0]$.
Using two frames guarantees a bounded Lipschitz constant for the chosen frame and does not impact the sampled measure.
As the manifold is compact, the assumptions of our analysis are satisfied automatically.
The frozen flow is computed explicitly in coordinates.
In order to obtain a fair comparison between the methods, we use Gaussian random variables.

We now compare the numerical behaviour of the frozen flow Euler method \eqref{equation:Euler_FF}, the new second order method \eqref{equation:new_weak_method}, the Riemmanian Langevin method \cite{Bharath23sae}, and the extrinsic projection methods of order one \cite{Lelievre10fec} for sampling the invariant measure of Riemannian Langevin dynamics \eqref{equation:Langevin} on $\S^2$.
We consider the potential $V(y_{\theta,\varphi})=-\sin(\theta)$, the associated vector field (containing the Ito correction)
\[
F=-\nabla V-\nabla_{E_1}E_1-\nabla_{E_2}E_2,
\]
and the test function $\phi(x)=x_3^2$.
In this context, the generator is $\LL\phi=-\phi'(\nabla V)+\frac{1}{2} \Delta_{\S^2}$, with the spherical Laplacian $\Delta_{\S^2}$.
Assuming ergodicity (which is observed numerically for the methods studied here), the numerical schemes approximate the quantity
\[
%\lim_{N\rightarrow\infty} \E[\phi(X_N)]
\frac{\int_{\S^2}\phi(x) e^{-V(x)}d\vol(x)}{\int_{\S^2}e^{-V(x)}d\vol(x)}
=3-\frac{2}{\tanh(1)}.
\]

We observe the weak error curves in Figure \ref{figure:Plot_sphere}, as well as the expected orders of convergence. The new second-order frozen flow method outperforms the other integrators and reaches the Monte-Carlo threshold with much larger timestep.
%Note that the error constants of projection methods are higher than for intrinsic methods.

\begin{figure}[ht]
	\begin{minipage}[c]{.44\linewidth}
		\begin{center}
		\includegraphics[scale=0.6]{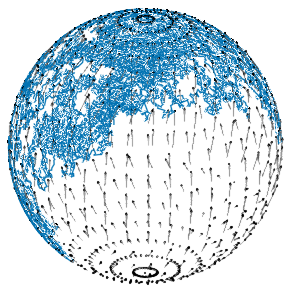}
		\end{center}
	\end{minipage} \hfill
	\begin{minipage}[c]{.54\linewidth}
		\begin{center}
		\includegraphics[scale=0.4]{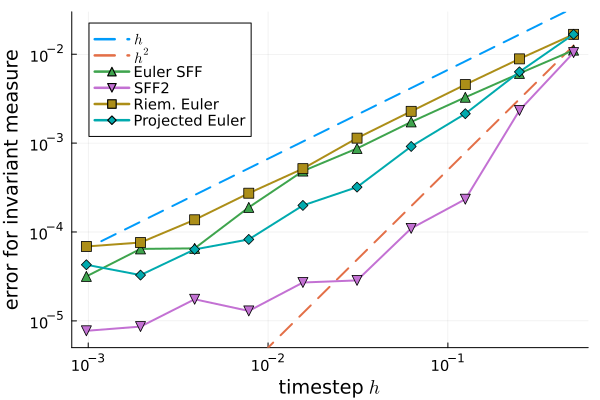}
		\end{center}
	\end{minipage}
	\caption{A trajectory of the new second order method \eqref{equation:new_Brownian_method} (left) and the convergence curve for the weak approximation of stochastic rigid body equation on the sphere (right) with $\phi(x)=x_3^2$, $T=10$ and $M=10^8$ trajectories.}
	\label{figure:Plot_sphere}
\end{figure}

\subsection{Generalised Cauchy measures}

The generalised Cauchy measures are probability measures $\mu_\beta$ defined on $\R^n$ by
\[\mu_\beta(dx) = \left(1+|x|^2\right)^{-\beta}\frac{dx}{Z_\beta},\quad Z_\beta = \frac{\Gamma(\beta-n/2)}{\Gamma(\beta)},\]
with $\beta>n/2$. They can be interpreted as finite dimensional approximations of Gaussian distributions, as in \cite{Bobkov2009wpi}, and are related to the fast diffusion equation for which they play the role of the heat kernel and are known as Barenblatt' solutions (see \cite{Vasquez07tpm}).
A generalised Cauchy measure is the ergodic distribution of the dynamics
\begin{equation}
\label{equation:Cauchy_Euc}
dX_t = -2(\beta-1)X_t dt + \sqrt{2}(1+|X_t|^2)dB(t),
\end{equation}
where $B(t)$ is a 2-dimensional Brownian motion.
A direct numerical simulation of this SDE presents numerous difficulties as the multiplicative non-Lipschitz noise brings tedious stability problems, so that most standard Euclidean numerical methods fail.
However, the noise can be interpreted as additive at the cost of a change of metric $g_x = (1+|x|^2)^{-1}\langle\cdot,\cdot\rangle$. This change is not simply a transport of difficulties from noise to geometry, as it reinterprets the problem in a convenient space. The same approach has been fruitful in the study of the associated spectral gap, i.e. the speed of convergence of the semigroup to the ergodic measure, in \cite{Huguet22IRF}. This context calls for methods on manifolds, as Euclidean methods are not adapted.
We emphasize that the process $(X_t)_t$ does not have bounded moments at all orders. Thus, the convergence results of the standard Euclidean analysis, and of our analysis as well, do not hold.
This makes the simulation of generalised Cauchy measures a convenient toy problem for testing our methods.

We are concerned with the simulation of generalised Cauchy measure on $\R^2$. As explained in \cite{Huguet20sco}, the adapted manifold can be interpreted as the revolution surface 
\begin{equation}
\label{equation:def_S_Cauchy}
\SS = \left\{(\tanh(r)\cos(\theta), \tanh(r)\sin(\theta), h(r)) ; (r,\theta)\in\R_+\times\S^1 \right\},
\end{equation}
with $h(r) = \argsinh(\cosh(r)) -\cosh^{-2}(r)\sqrt{\cosh^2(r)+1}$. However, it is more handy to see it through a global map, as $\R_+\times\S^1$, with coordinate $(r,\theta)$, endowed with the metric
\[ g = \begin{pmatrix} 1 &0\\ 0&\tanh^2(r)\\ \end{pmatrix}\]
in the basis $(\partial_r, \partial_\theta)$. This map comes from fitting polar coordinates on $\R^2$, 
\[(x,y) = (\sinh(r)\cos(\theta), \sinh(r)\sin(\theta)),\] which trivialise the metric $g$. We define the following orthonormal frame basis on $\R^*_+\times\S^1$
\[E_1(r,\theta) = \partial_r,\quad E_2(r,\theta) = \frac{1}{\tanh(r)}\partial_\theta.\]
In this manifold, generalised Cauchy measures correspond to Gibbs measures associated to the potential $V_\beta(r,\theta) = 2(\beta-1)\log(\cosh(r))$ and the generator
\[\LL = E_1^2 + E_2^2 +\left(\frac{1}{\tanh(r)}-(2\beta-1)\tanh(r)\right)E_1.\]
Note that the frozen flow is explicit on $\SS$. For all $\alpha,\beta\in\R$ and $(r_0,\theta_0)\in\R_+^*\times\S^1$, we find
\[\exp\left(t(\alpha E_1 + \beta E_2)\right)\cdot(r_0,\theta_0) = \left(r_0 + \alpha t,\theta_0 + \frac{\beta}{\alpha}\log\left(\frac{\sinh(r_0 + \alpha t)}{\sinh(r_0)}\right)\right),\]
for all $t\geq0$ such that $r_t$ stays positive.
This brings an additional difficulty in this experiment. Although the potential $f^1$ blows up at the origin, the coefficient 
\[\alpha = hf^1(r) + \sqrt{2h}\xi\]
with $\xi$ a Gaussian random variable, can be negative and the frozen flow not defined. The use of bounded random variable is instrumental in avoiding this instability. In particular, the method \eqref{equation:Euler_FF} with Rademacher random variables is well-posed.

Our numerical experiment aims to illustrate the ergodicity of the simplest frozen flow method \eqref{equation:Euler_FF}, which, to the best of our knowledge, has not yet been established in the literature.
We use the first eigenfunctions of the generator $\phi^1(r,\theta) = \sinh(r)^2-(\beta-2)^{-1}$, $\phi^2(r,\theta) = \sinh(r)\cos(\theta)$, which both satisfy $\int\phi^k\, d\mu_\beta=0$.
As a consequence of the Poincaré inequality, the variance of the semigroup associated to $\LL$ converges at exponential speed to $0$. The coefficient of the exponential is given by the spectral gap (see \cite{Huguet24pia} for the spectral gap expression). The test functions $\phi^k$ are extremal for the Poincaré inequality. Hence, for these particular test functions, the exponential convergence can be observed not only for the variance but also pointwise. 
In Figure \ref{figure:Plot_Cauchy}, we illustrate numerically the exponential convergence to equilibrium of the numerical methods, in the spirit of \cite{Debussche12wbe} in the Euclidean setting. In particular, we observe a convergence in $Ce^{-4(\beta-2)t}$ and $Ce^{-2(\beta-1)t}$ for our specific choice of test functions, confirming numerically the results in \cite{Huguet24pia}.
For the first eigenfunction $\phi^1$, the error for the invariant measure reaches the discretisation bias with exponential speed. We observe that the bias threshold evolves linearly in $h$ (up to Monte-Carlo error), as expected for a method of order one.
For $\phi^2$, we do not observe the bias of the Euler frozen flow method \eqref{equation:Euler_FF}, hinting that for this specific test function, the method might be of higher order of accuracy for the invariant measure.
For the sake of comparison, we mention that the standard Euclidean integrators applied to \eqref{equation:Cauchy_Euc} face severe instability issues and thus fail to converge, let alone be ergodic.

\begin{figure}[ht]
	\begin{minipage}[c]{.48\linewidth}
		\begin{center}
		\includegraphics[scale=0.35]{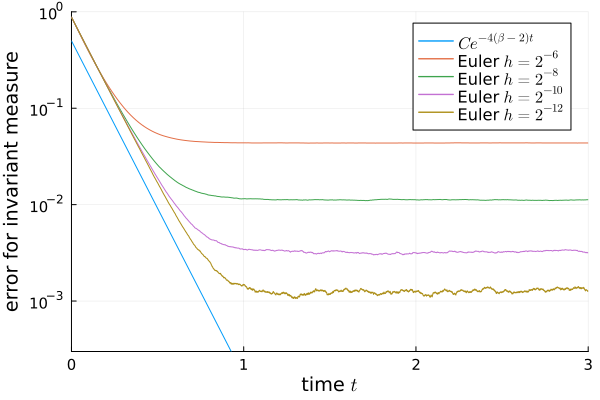}
		\end{center}
	\end{minipage} \hfill
	\begin{minipage}[c]{.48\linewidth}
		\begin{center}
		\includegraphics[scale=0.35]{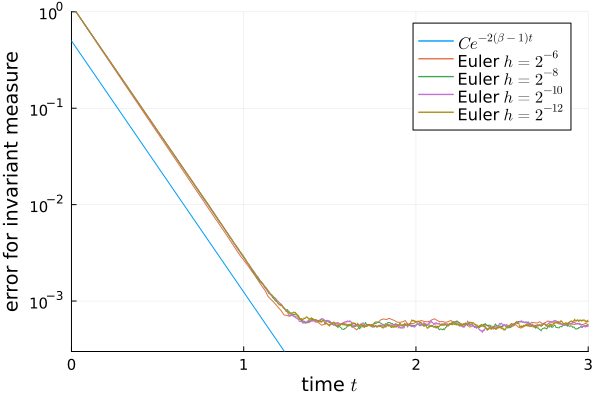}
		\end{center}
	\end{minipage}
	\caption{Evolution in time of the error for the invariant measure of the Euler frozen flow method \eqref{equation:Euler_FF} for the simulation of generalised Cauchy measures on the manifold \eqref{equation:def_S_Cauchy} with $\beta=4$, $M=10^8$ trajectories, and the test functions $\phi^1(r,\theta) = \sinh(r)^2-(\beta-2)^{-1}$ (left) and $\phi^2(r,\theta) = \sinh(r)\cos(\theta)$ (right).}
	\label{figure:Plot_Cauchy}
\end{figure}

\section{Conclusion}
\label{section:conclusion}

In this paper, we provided a brand-new class of intrinsic discretisations for the approximation of stochastic dynamics on Riemannian manifolds with high weak order of accuracy.
We presented a new robust convergence analysis, that naturally generalises the Euclidean analysis, and the new algebraic formalism of planar exotic trees, forests, and series for the study of the order conditions.
The analysis applies in particular for the creation of high-order sampling methods for Riemannian Langevin dynamics.
The new methods were tested on a variety of different manifolds, where we observe that the new second order frozen flow method outperforms the other approaches from the literature in terms of accuracy.

The tools developed in the present paper open several avenues for future research.
Similar to the Euclidean setting, we will extend the analysis to general SDEs with multiplicative noise and to the creation of integrators with high-order of accuracy for sampling the invariant measure, and low weak order (in the spirit of \cite{Leimkuhler13rco, Vilmart15pif}). This requires generalisations of the exotic forests formalism \cite{Busnot25osr}, the challenging extension of the integration by parts of trees in the planar case \cite{Laurent20eab, Laurent21ocf, Bronasco22ebs, Bronasco22cef}, and the extension of stochastic backward error analysis for the invariant measure \cite{Shardlow06mef, Zygalakis11ote, Debussche12wbe, Abdulle14hon, Bronasco22cef} on manifolds.
While the focus of this paper is on discretisations and improvements in accuracy only, our approach could naturally be combined with popular sampling techniques, such as MLMC \cite{Giles08mmc,Giles15mmc}, perturbations \cite{Lelievre13onr,Duncan16vru,Abdulle19act},...
Then, following the work \cite{Bharath23sae}, one could wonder whether a purely Riemannian approach of algebraic order theory could be used. Our approach follows the one of deterministic Lie-group methods and thus relies on a curvature-free connection, unrelated to the natural Levi-Civita connection on Riemannian manifolds. The study of such new methods is already open in the deterministic setting and relies on the challenging general understanding of the connection algebra \cite{AlKaabi22aao}.
Finally, the new algebraic formalism of planar exotic series is interesting in itself and could be studied for its universal combinatorial, algebraic, and geometric properties, but also for its potential applications in different fields, in the spirit of the use of Butcher series and their extensions in rough paths \cite{Hairer15gvn, Lejay22cgr}, renormalisation theory \cite{Brouder00rkm}, variational calculus \cite{Laurent23tab, Laurent23tld} or approximation of PDEs \cite{Bronsard22aod}.
These projects will be studied in upcoming works.

%--------------------------------------------------------------
\bigskip

\noindent \textbf{Acknowledgements.}\
The authors would like to thank R.\ts Bergmann, E.\ts Celledoni, H.\ts Munthe-Kaas, and B.\ts Owren for helpful discussions and advice on manipulating the package Manifolds.jl \cite{Axen23mae}.
The authors acknowledge the support of the Swiss National Science Foundation, projects No 200020\_214819, and No. 200020\_192129, of the Research Council of Norway through project 302831 ``Computational Dynamics and Stochastics on Manifolds'' (CODYSMA), and of the French program ANR-11-LABX-0020-0 (Labex Lebesgue).
The computations were performed on the Baobab cluster of the University of Geneva.

\begin{small}

\bibliographystyle{abbrv}
%\nocite{*}
\bibliography{Ma_Bibliographie}

\end{small}

\newpage
\begin{appendices}

\section{Order conditions for third weak order}
\label{app:order_cond_3}

For the sake of completion, we detail in Table \ref{table:weak_order_conditions_order_3} the 95 additional order conditions for weak order 3 with stochastic frozen flow methods \eqref{equation:def_CGsto}. Recall that these conditions are not independent and satisfy shuffle relations (see Remark \ref{remark:shuffle_identities}).

%\moda{We are missing 3 forests! REDO combinatorics!}

%Number Shuffle relations
%\[
%S_n=C_n-\sum_{k+l=n,k<l} C_k C_l -\frac{1}{2}C_{n/2}(C_{n/2}+1)
%\]

\begin{longtable}{C|C|C}
\text{Exotic forest } \pi & a(\pi) & e(\pi) \\\hline
\forest{b[b[b]]} & z^{0}_{i,k_3} Z^{0}_{i,j,k_2} Z^{0}_{j,k,k_1} & \frac{1}{6}\\
\forest{b[b,b]} & \sum^!_{k_1\geq k_2} z^{0}_{i,k_3} Z^{0}_{i,j,k_2} Z^{0}_{i,k,k_1} & \frac{1}{6}\\
\forest{b[b[1,1]]} & \sum^!_{k_1\geq k_2} z^{0}_{i,k_4} Z^{0}_{i,j,k_3} Z^{d_1}_{j,k_2} Z^{d_1}_{j,k_1} & \frac{1}{6}\\
\forest{b[b[1],1]} & \sum^!_{k_1\geq k_3} z^{0}_{i,k_4} Z^{0}_{i,j,k_3} Z^{d_1}_{j,k_2} Z^{d_1}_{i,k_1} & \frac{1}{3}\\
\forest{b[1,b[1]]} & \sum^!_{k_3\geq k_1} z^{0}_{i,k_4} Z^{0}_{i,j,k_3} Z^{d_1}_{j,k_2} Z^{d_1}_{i,k_1} & \frac{1}{3}\\
\forest{b[b,1,1]} & \sum^!_{k_1\geq k_2\geq k_3} z^{0}_{i,k_4} Z^{0}_{i,j,k_3} Z^{d_1}_{i,k_2} Z^{d_1}_{i,k_1} & \frac{1}{6}\\
\forest{b[1,b,1]} & \sum^!_{k_1\geq k_3\geq k_2} z^{0}_{i,k_4} Z^{0}_{i,j,k_3} Z^{d_1}_{i,k_2} Z^{d_1}_{i,k_1} & 0\\
\forest{b[1,1,b]} & \sum^!_{k_3\geq k_1\geq k_2} z^{0}_{i,k_4} Z^{0}_{i,j,k_3} Z^{d_1}_{i,k_2} Z^{d_1}_{i,k_1} & \frac{1}{6}\\
\forest{b[2,2,1,1]} & \sum^!_{k_1\geq k_2\geq k_3\geq k_4} z^{0}_{i,k_5} Z^{d_2}_{i,k_4} Z^{d_2}_{i,k_3} Z^{d_1}_{i,k_2} Z^{d_1}_{i,k_1} & \frac{1}{6}\\
\forest{b[2,1,2,1]} & \sum^!_{k_1\geq k_2\geq k_3\geq k_4} z^{0}_{i,k_5} Z^{d_2}_{i,k_4} Z^{d_1}_{i,k_3} Z^{d_2}_{i,k_2} Z^{d_1}_{i,k_1} & 0\\
\forest{b[1,2,2,1]} & \sum^!_{k_1\geq k_2\geq k_3\geq k_4} z^{0}_{i,k_5} Z^{d_1}_{i,k_4} Z^{d_2}_{i,k_3} Z^{d_2}_{i,k_2} Z^{d_1}_{i,k_1} & 0\\
\forest{b[b],b} & \sum^!_{k_1\geq k_2} Z^{0}_{j,k,k_3} z^{0}_{j,k_2} z^{0}_{i,k_1} & \frac{1}{6}\\
\forest{b,b[b]} & \sum^!_{k_2\geq k_1} Z^{0}_{j,k,k_3} z^{0}_{j,k_2} z^{0}_{i,k_1} & \frac{1}{3}\\
\forest{b[1,1],b} & \sum^!_{k_1\geq k_2,k_3\geq k_4} Z^{d_1}_{j,k_4} Z^{d_1}_{j,k_3} z^{0}_{j,k_2} z^{0}_{i,k_1} & \frac{1}{6}\\
\forest{b,b[1,1]} & \sum^!_{k_2\geq k_1,k_3\geq k_4} Z^{d_1}_{j,k_4} Z^{d_1}_{j,k_3} z^{0}_{j,k_2} z^{0}_{i,k_1} & \frac{1}{3}\\
\forest{b[1],b[1]} & \sum^!_{k_1\geq k_2} Z^{d_1}_{j,k_4} Z^{d_1}_{i,k_3} z^{0}_{j,k_2} z^{0}_{i,k_1} & \frac{1}{6}\\
\forest{b[b[1]],1} & \sum^!_{k_1\geq k_2} Z^{d_1}_{j,k_4} Z^{0}_{i,j,k_3} z^{0}_{i,k_2} z^{d_1}_{k_1} & 0\\
\forest{1,b[b[1]]} & \sum^!_{k_2\geq k_1} Z^{d_1}_{j,k_4} Z^{0}_{i,j,k_3} z^{0}_{i,k_2} z^{d_1}_{k_1} & \frac{1}{3}\\
\forest{b[b,1],1} & \sum^!_{k_1\geq k_2,k_3\geq k_4} Z^{0}_{i,j,k_4} Z^{d_1}_{i,k_3} z^{0}_{i,k_2} z^{d_1}_{k_1} & 0\\
\forest{b[1,b],1} & \sum^!_{k_1\geq k_2,k_4\geq k_3} Z^{0}_{i,j,k_4} Z^{d_1}_{i,k_3} z^{0}_{i,k_2} z^{d_1}_{k_1} & 0\\
\forest{1,b[b,1]} & \sum^!_{k_2\geq k_1,k_3\geq k_4} Z^{0}_{i,j,k_4} Z^{d_1}_{i,k_3} z^{0}_{i,k_2} z^{d_1}_{k_1} & 0\\
\forest{1,b[1,b]} & \sum^!_{k_2\geq k_1,k_4\geq k_3} Z^{0}_{i,j,k_4} Z^{d_1}_{i,k_3} z^{0}_{i,k_2} z^{d_1}_{k_1} & \frac{1}{3}\\
\forest{b[2,2,1],1} & \sum^!_{k_1\geq k_2,k_3\geq k_4\geq k_5}  Z^{d_2}_{i,k_5} Z^{d_2}_{i,k_4} Z^{d_1}_{i,k_3} z^{0}_{i,k_2} z^{d_1}_{k_1} & 0\\
\forest{b[2,1,2],1} & \sum^!_{k_1\geq k_2,k_3\geq k_4\geq k_5}  Z^{d_2}_{i,k_5} Z^{d_1}_{i,k_4} Z^{d_2}_{i,k_3} z^{0}_{i,k_2} z^{d_1}_{k_1} & 0\\
\forest{b[1,2,2],1} & \sum^!_{k_1\geq k_2,k_3\geq k_4\geq k_5}  Z^{d_1}_{i,k_5} Z^{d_2}_{i,k_4} Z^{d_2}_{i,k_3} z^{0}_{i,k_2} z^{d_1}_{k_1} & 0\\
\forest{1,b[2,2,1]} & \sum^!_{k_2\geq k_1,k_3\geq k_4\geq k_5}  Z^{d_2}_{i,k_5} Z^{d_2}_{i,k_4} Z^{d_1}_{i,k_3} z^{0}_{i,k_2} z^{d_1}_{k_1} & \frac{1}{3}\\
\forest{1,b[2,1,2]} & \sum^!_{k_2\geq k_1,k_3\geq k_4\geq k_5}  Z^{d_2}_{i,k_5} Z^{d_1}_{i,k_4} Z^{d_2}_{i,k_3} z^{0}_{i,k_2} z^{d_1}_{k_1} & 0\\
\forest{1,b[1,2,2]} & \sum^!_{k_2\geq k_1,k_3\geq k_4\geq k_5}  Z^{d_1}_{i,k_5} Z^{d_2}_{i,k_4} Z^{d_2}_{i,k_3} z^{0}_{i,k_2} z^{d_1}_{k_1} & \frac{1}{3}\\
\forest{b,b,b} & \sum^!_{k_1\geq k_2\geq k_3}  z^{0}_{k,k_3} z^{0}_{j,k_2} z^{0}_{i,k_1} & \frac{1}{6}\\
\forest{b[b],1,1} & \sum^!_{k_1\geq k_2\geq k_3}  Z^{0}_{i,j,k_4} z^{0}_{i,k_3} z^{d_1}_{k_2} z^{d_1}_{k_1} & \frac{1}{6}\\
\forest{1,b[b],1} & \sum^!_{k_1\geq k_3\geq k_2}  Z^{0}_{i,j,k_4} z^{0}_{i,k_3} z^{d_1}_{k_2} z^{d_1}_{k_1} & 0\\
\forest{1,1,b[b]} & \sum^!_{k_3\geq k_1\geq k_2}  Z^{0}_{i,j,k_4} z^{0}_{i,k_3} z^{d_1}_{k_2} z^{d_1}_{k_1} & \frac{1}{6}\\
\forest{b[1],b,1} & \sum^!_{k_1\geq k_2\geq k_3}  Z^{d_1}_{j,k_4} z^{0}_{j,k_3} z^{0}_{i,k_2} z^{d_1}_{k_1} & 0\\
\forest{b[1],1,b} & \sum^!_{k_2\geq k_1\geq k_3}  Z^{d_1}_{j,k_4} z^{0}_{j,k_3} z^{0}_{i,k_2} z^{d_1}_{k_1} & 0\\
\forest{b,b[1],1} & \sum^!_{k_1\geq k_3\geq k_2}  Z^{d_1}_{j,k_4} z^{0}_{j,k_3} z^{0}_{i,k_2} z^{d_1}_{k_1} & 0\\
\forest{b,1,b[1]} & \sum^!_{k_3\geq k_1\geq k_2}  Z^{d_1}_{j,k_4} z^{0}_{j,k_3} z^{0}_{i,k_2} z^{d_1}_{k_1} & \frac{1}{3}\\
\forest{1,b[1],b} & \sum^!_{k_2\geq k_3\geq k_1}  Z^{d_1}_{j,k_4} z^{0}_{j,k_3} z^{0}_{i,k_2} z^{d_1}_{k_1} & \frac{1}{3}\\
\forest{1,b,b[1]} & \sum^!_{k_3\geq k_2\geq k_1}  Z^{d_1}_{j,k_4} z^{0}_{j,k_3} z^{0}_{i,k_2} z^{d_1}_{k_1} & \frac{1}{3}\\
\forest{b[2,2],1,1} & \sum^!_{k_1\geq k_2\geq k_3,k_4\geq k_5} Z^{d_2}_{i,k_5} Z^{d_2}_{i,k_4} z^{0}_{i,k_3} z^{d_1}_{k_2} z^{d_1}_{k_1} & \frac{1}{6}\\
\forest{1,b[2,2],1} & \sum^!_{k_1\geq k_3\geq k_2,k_4\geq k_5} Z^{d_2}_{i,k_5} Z^{d_2}_{i,k_4} z^{0}_{i,k_3} z^{d_1}_{k_2} z^{d_1}_{k_1} & 0\\
\forest{1,1,b[2,2]} & \sum^!_{k_3\geq k_1\geq k_2,k_4\geq k_5} Z^{d_2}_{i,k_5} Z^{d_2}_{i,k_4} z^{0}_{i,k_3} z^{d_1}_{k_2} z^{d_1}_{k_1} & \frac{1}{3}\\
\forest{b[2,1],2,1} & \sum^!_{k_1\geq k_2\geq k_3,k_4\geq k_5} Z^{d_2}_{i,k_5} Z^{d_1}_{i,k_4} z^{0}_{i,k_3} z^{d_2}_{k_2} z^{d_1}_{k_1} & 0\\
\forest{b[1,2],2,1} & \sum^!_{k_1\geq k_2\geq k_3,k_4\geq k_5} Z^{d_1}_{i,k_5} Z^{d_2}_{i,k_4} z^{0}_{i,k_3} z^{d_2}_{k_2} z^{d_1}_{k_1} & 0\\
\forest{2,b[2,1],1} & \sum^!_{k_1\geq k_3\geq k_2,k_4\geq k_5} Z^{d_2}_{i,k_5} Z^{d_1}_{i,k_4} z^{0}_{i,k_3} z^{d_2}_{k_2} z^{d_1}_{k_1} & 0\\
\forest{2,b[1,2],1} & \sum^!_{k_1\geq k_3\geq k_2,k_4\geq k_5} Z^{d_1}_{i,k_5} Z^{d_2}_{i,k_4} z^{0}_{i,k_3} z^{d_2}_{k_2} z^{d_1}_{k_1} & 0\\
\forest{2,1,b[2,1]} & \sum^!_{k_3\geq k_1\geq k_2,k_4\geq k_5} Z^{d_2}_{i,k_5} Z^{d_1}_{i,k_4} z^{0}_{i,k_3} z^{d_2}_{k_2} z^{d_1}_{k_1} & \frac{2}{3}\\
\forest{2,1,b[1,2]} & \sum^!_{k_3\geq k_1\geq k_2,k_4\geq k_5} Z^{d_1}_{i,k_5} Z^{d_2}_{i,k_4} z^{0}_{i,k_3} z^{d_2}_{k_2} z^{d_1}_{k_1} & 0\\
\forest{b,b,1,1} & \sum^!_{k_1\geq k_2\geq k_3\geq k_4} z^{0}_{j,k_4} z^{0}_{i,k_3} z^{d_1}_{k_2} z^{d_1}_{k_1} & \frac{1}{6}\\
\forest{b,1,b,1} & \sum^!_{k_1\geq k_3\geq k_2\geq k_4} z^{0}_{j,k_4} z^{0}_{i,k_3} z^{d_1}_{k_2} z^{d_1}_{k_1} & 0\\
\forest{b,1,1,b} & \sum^!_{k_3\geq k_1\geq k_2\geq k_4} z^{0}_{j,k_4} z^{0}_{i,k_3} z^{d_1}_{k_2} z^{d_1}_{k_1} & \frac{1}{6}\\
\forest{1,b,b,1} & \sum^!_{k_1\geq k_3\geq k_4\geq k_2} z^{0}_{j,k_4} z^{0}_{i,k_3} z^{d_1}_{k_2} z^{d_1}_{k_1} & 0\\
\forest{1,b,1,b} & \sum^!_{k_3\geq k_1\geq k_4\geq k_2} z^{0}_{j,k_4} z^{0}_{i,k_3} z^{d_1}_{k_2} z^{d_1}_{k_1} & 0\\
\forest{1,1,b,b} & \sum^!_{k_3\geq k_4\geq k_1\geq k_2} z^{0}_{j,k_4} z^{0}_{i,k_3} z^{d_1}_{k_2} z^{d_1}_{k_1} & \frac{1}{6}\\
\forest{b[1],2,2,1} & \sum^!_{k_1\geq k_2\geq k_3\geq k_4} Z^{d_1}_{i,k_5} z^{0}_{j,k_4} z^{d_2}_{k_3} z^{d_2}_{k_2} z^{d_1}_{k_1} & 0\\
\forest{2,b[1],2,1} & \sum^!_{k_1\geq k_2\geq k_4\geq k_3} Z^{d_1}_{i,k_5} z^{0}_{j,k_4} z^{d_2}_{k_3} z^{d_2}_{k_2} z^{d_1}_{k_1} & 0\\
\forest{2,2,b[1],1} & \sum^!_{k_1\geq k_4\geq k_2\geq k_3} Z^{d_1}_{i,k_5} z^{0}_{j,k_4} z^{d_2}_{k_3} z^{d_2}_{k_2} z^{d_1}_{k_1} & 0\\
\forest{2,2,1,b[1]} & \sum^!_{k_4\geq k_1\geq k_2\geq k_3} Z^{d_1}_{i,k_5} z^{0}_{j,k_4} z^{d_2}_{k_3} z^{d_2}_{k_2} z^{d_1}_{k_1} & \frac{1}{3}\\
\forest{b[1],2,1,2} & \sum^!_{k_1\geq k_2\geq k_3\geq k_4} Z^{d_1}_{i,k_5} z^{0}_{j,k_4} z^{d_2}_{k_3} z^{d_1}_{k_2} z^{d_2}_{k_1} & 0\\
\forest{2,b[1],1,2} & \sum^!_{k_1\geq k_2\geq k_4\geq k_3} Z^{d_1}_{i,k_5} z^{0}_{j,k_4} z^{d_2}_{k_3} z^{d_1}_{k_2} z^{d_2}_{k_1} & 0\\
\forest{2,1,b[1],2} & \sum^!_{k_1\geq k_4\geq k_2\geq k_3} Z^{d_1}_{i,k_5} z^{0}_{j,k_4} z^{d_2}_{k_3} z^{d_1}_{k_2} z^{d_2}_{k_1} & 0\\
\forest{2,1,2,b[1]} & \sum^!_{k_4\geq k_1\geq k_2\geq k_3} Z^{d_1}_{i,k_5} z^{0}_{j,k_4} z^{d_2}_{k_3} z^{d_1}_{k_2} z^{d_2}_{k_1} & 0\\
\forest{b[1],1,2,2} & \sum^!_{k_1\geq k_2\geq k_3\geq k_4} Z^{d_1}_{i,k_5} z^{0}_{j,k_4} z^{d_1}_{k_3} z^{d_2}_{k_2} z^{d_2}_{k_1} & 0\\
\forest{1,b[1],2,2} & \sum^!_{k_1\geq k_2\geq k_4\geq k_3} Z^{d_1}_{i,k_5} z^{0}_{j,k_4} z^{d_1}_{k_3} z^{d_2}_{k_2} z^{d_2}_{k_1} & \frac{1}{3}\\
\forest{1,2,b[1],2} & \sum^!_{k_1\geq k_4\geq k_2\geq k_3} Z^{d_1}_{i,k_5} z^{0}_{j,k_4} z^{d_1}_{k_3} z^{d_2}_{k_2} z^{d_2}_{k_1} & 0\\
\forest{1,2,2,b[1]} & \sum^!_{k_4\geq k_1\geq k_2\geq k_3} Z^{d_1}_{i,k_5} z^{0}_{j,k_4} z^{d_1}_{k_3} z^{d_2}_{k_2} z^{d_2}_{k_1} & \frac{1}{3}\\
\forest{b,2,2,1,1} & \sum^!_{k_1\geq k_2\geq k_3\geq k_4\geq k_5} z^{0}_{i,k_5} z^{d_2}_{k_4} z^{d_2}_{k_3} z^{d_1}_{k_2} z^{d_1}_{k_1} & \frac{1}{6}\\
\forest{b,2,1,2,1} & \sum^!_{k_1\geq k_2\geq k_3\geq k_4\geq k_5} z^{0}_{i,k_5} z^{d_2}_{k_4} z^{d_1}_{k_3} z^{d_2}_{k_2} z^{d_1}_{k_1} & 0\\
\forest{b,1,2,2,1} & \sum^!_{k_1\geq k_2\geq k_3\geq k_4\geq k_5} z^{0}_{i,k_5} z^{d_1}_{k_4} z^{d_2}_{k_3} z^{d_2}_{k_2} z^{d_1}_{k_1} & 0\\
\forest{2,b,2,1,1} & \sum^!_{k_1\geq k_2\geq k_3\geq k_5\geq k_4} z^{0}_{i,k_5} z^{d_2}_{k_4} z^{d_2}_{k_3} z^{d_1}_{k_2} z^{d_1}_{k_1} & 0\\
\forest{2,b,1,2,1} & \sum^!_{k_1\geq k_2\geq k_3\geq k_5\geq k_4} z^{0}_{i,k_5} z^{d_2}_{k_4} z^{d_1}_{k_3} z^{d_2}_{k_2} z^{d_1}_{k_1} & 0\\
\forest{1,b,2,2,1} & \sum^!_{k_1\geq k_2\geq k_3\geq k_5\geq k_4} z^{0}_{i,k_5} z^{d_1}_{k_4} z^{d_2}_{k_3} z^{d_2}_{k_2} z^{d_1}_{k_1} & 0\\
\forest{2,2,b,1,1} & \sum^!_{k_1\geq k_2\geq k_5\geq k_3\geq k_4} z^{0}_{i,k_5} z^{d_2}_{k_4} z^{d_2}_{k_3} z^{d_1}_{k_2} z^{d_1}_{k_1} & \frac{1}{6}\\
\forest{2,1,b,2,1} & \sum^!_{k_1\geq k_2\geq k_5\geq k_3\geq k_4} z^{0}_{i,k_5} z^{d_2}_{k_4} z^{d_1}_{k_3} z^{d_2}_{k_2} z^{d_1}_{k_1} & 0\\
\forest{1,2,b,2,1} & \sum^!_{k_1\geq k_2\geq k_5\geq k_3\geq k_4} z^{0}_{i,k_5} z^{d_1}_{k_4} z^{d_2}_{k_3} z^{d_2}_{k_2} z^{d_1}_{k_1} & 0\\
\forest{2,2,1,b,1} & \sum^!_{k_1\geq k_5\geq k_2\geq k_3\geq k_4} z^{0}_{i,k_5} z^{d_2}_{k_4} z^{d_2}_{k_3} z^{d_1}_{k_2} z^{d_1}_{k_1} & 0\\
\forest{2,1,2,b,1} & \sum^!_{k_1\geq k_5\geq k_2\geq k_3\geq k_4} z^{0}_{i,k_5} z^{d_2}_{k_4} z^{d_1}_{k_3} z^{d_2}_{k_2} z^{d_1}_{k_1} & 0\\
\forest{1,2,2,b,1} & \sum^!_{k_1\geq k_5\geq k_2\geq k_3\geq k_4} z^{0}_{i,k_5} z^{d_1}_{k_4} z^{d_2}_{k_3} z^{d_2}_{k_2} z^{d_1}_{k_1} & 0\\
\forest{2,2,1,1,b} & \sum^!_{k_5\geq k_1\geq k_2\geq k_3\geq k_4} z^{0}_{i,k_5} z^{d_2}_{k_4} z^{d_2}_{k_3} z^{d_1}_{k_2} z^{d_1}_{k_1} & \frac{1}{6}\\
\forest{2,1,2,1,b} & \sum^!_{k_5\geq k_1\geq k_2\geq k_3\geq k_4} z^{0}_{i,k_5} z^{d_2}_{k_4} z^{d_1}_{k_3} z^{d_2}_{k_2} z^{d_1}_{k_1} & 0\\
\forest{1,2,2,1,b} & \sum^!_{k_5\geq k_1\geq k_2\geq k_3\geq k_4} z^{0}_{i,k_5} z^{d_1}_{k_4} z^{d_2}_{k_3} z^{d_2}_{k_2} z^{d_1}_{k_1} & 0\\
\forest{3,3,2,2,1,1} & \sum^!_{k_1\geq k_2\geq k_3\geq k_4\geq k_5\geq k_6} z^{d_3}_{k_6} z^{d_3}_{k_5} z^{d_2}_{k_4} z^{d_2}_{k_3} z^{d_1}_{k_2} z^{d_1}_{k_1} & \frac{1}{6}\\
\forest{3,3,2,1,2,1} & \sum^!_{k_1\geq k_2\geq k_3\geq k_4\geq k_5\geq k_6} z^{d_3}_{k_6} z^{d_3}_{k_5} z^{d_2}_{k_4} z^{d_1}_{k_3} z^{d_2}_{k_2} z^{d_1}_{k_1} & 0\\
\forest{3,3,2,1,1,2} & \sum^!_{k_1\geq k_2\geq k_3\geq k_4\geq k_5\geq k_6} z^{d_3}_{k_6} z^{d_3}_{k_5} z^{d_2}_{k_4} z^{d_1}_{k_3} z^{d_1}_{k_2} z^{d_2}_{k_1} & 0\\
\forest{3,2,3,2,1,1} & \sum^!_{k_1\geq k_2\geq k_3\geq k_4\geq k_5\geq k_6} z^{d_3}_{k_6} z^{d_2}_{k_5} z^{d_3}_{k_4} z^{d_2}_{k_3} z^{d_1}_{k_2} z^{d_1}_{k_1} & 0\\
\forest{3,2,3,1,2,1} & \sum^!_{k_1\geq k_2\geq k_3\geq k_4\geq k_5\geq k_6} z^{d_3}_{k_6} z^{d_2}_{k_5} z^{d_3}_{k_4} z^{d_1}_{k_3} z^{d_2}_{k_2} z^{d_1}_{k_1} & 0\\
\forest{3,2,3,1,1,2} & \sum^!_{k_1\geq k_2\geq k_3\geq k_4\geq k_5\geq k_6} z^{d_3}_{k_6} z^{d_2}_{k_5} z^{d_3}_{k_4} z^{d_1}_{k_3} z^{d_1}_{k_2} z^{d_2}_{k_1} & 0\\
\forest{3,2,2,3,1,1} & \sum^!_{k_1\geq k_2\geq k_3\geq k_4\geq k_5\geq k_6} z^{d_3}_{k_6} z^{d_2}_{k_5} z^{d_2}_{k_4} z^{d_3}_{k_3} z^{d_1}_{k_2} z^{d_1}_{k_1} & 0\\
\forest{3,2,2,1,3,1} & \sum^!_{k_1\geq k_2\geq k_3\geq k_4\geq k_5\geq k_6} z^{d_3}_{k_6} z^{d_2}_{k_5} z^{d_2}_{k_4} z^{d_1}_{k_3} z^{d_3}_{k_2} z^{d_1}_{k_1} & 0\\
\forest{3,2,2,1,1,3} & \sum^!_{k_1\geq k_2\geq k_3\geq k_4\geq k_5\geq k_6} z^{d_3}_{k_6} z^{d_2}_{k_5} z^{d_2}_{k_4} z^{d_1}_{k_3} z^{d_1}_{k_2} z^{d_3}_{k_1} & 0\\
\forest{3,2,1,3,2,1} & \sum^!_{k_1\geq k_2\geq k_3\geq k_4\geq k_5\geq k_6} z^{d_3}_{k_6} z^{d_2}_{k_5} z^{d_1}_{k_4} z^{d_3}_{k_3} z^{d_2}_{k_2} z^{d_1}_{k_1} & 0\\
\forest{3,2,1,3,1,2} & \sum^!_{k_1\geq k_2\geq k_3\geq k_4\geq k_5\geq k_6} z^{d_3}_{k_6} z^{d_2}_{k_5} z^{d_1}_{k_4} z^{d_3}_{k_3} z^{d_1}_{k_2} z^{d_2}_{k_1} & 0\\
\forest{3,2,1,2,3,1} & \sum^!_{k_1\geq k_2\geq k_3\geq k_4\geq k_5\geq k_6} z^{d_3}_{k_6} z^{d_2}_{k_5} z^{d_1}_{k_4} z^{d_2}_{k_3} z^{d_3}_{k_2} z^{d_1}_{k_1} & 0\\
\forest{3,2,1,2,1,3} & \sum^!_{k_1\geq k_2\geq k_3\geq k_4\geq k_5\geq k_6} z^{d_3}_{k_6} z^{d_2}_{k_5} z^{d_1}_{k_4} z^{d_2}_{k_3} z^{d_1}_{k_2} z^{d_3}_{k_1} & 0\\
\forest{3,2,1,1,3,2} & \sum^!_{k_1\geq k_2\geq k_3\geq k_4\geq k_5\geq k_6} z^{d_3}_{k_6} z^{d_2}_{k_5} z^{d_1}_{k_4} z^{d_1}_{k_3} z^{d_3}_{k_2} z^{d_2}_{k_1} & 0\\
\forest{3,2,1,1,2,3} & \sum^!_{k_1\geq k_2\geq k_3\geq k_4\geq k_5\geq k_6} z^{d_3}_{k_6} z^{d_2}_{k_5} z^{d_1}_{k_4} z^{d_1}_{k_3} z^{d_2}_{k_2} z^{d_3}_{k_1} & 0\\
\caption{Order conditions of frozen flow methods for weak order 3. The expectations and sums that do not satisfy inequalities are omitted for conciseness.}
\label{table:weak_order_conditions_order_3}
\end{longtable}

\end{appendices}

\end{document}